\newtheorem{thm}{Theorem}[section]
\newtheorem{lem}[thm]{Lemma}
\newtheorem{prop}[thm]{Proposition}
\newtheorem{cor}[thm]{Corollary}
\newtheorem{conj}[thm]{Conjecture}
\newtheorem{rem}[thm]{Remark}
\newcommand{\VRep}{{\rm VRep}}
\newcommand{\Rep}{{\rm Rep}}
\newcommand{\ch}{{\rm ch}}
\newcommand{\Ind}{{\rm Ind}}
\newcommand{\q}{$q$}
\newcommand{\rk}{{\mathrm{rk}\,}}
\newcommand{\OS}{{\mathrm{OS}}}
\newcommand{\GL}{{\mathrm{GL}}}
\begin{document}
\begin{center}
{\large \bf  Induced log-concavity of equivariant matroid invariants
}
\end{center}

\begin{center}
Alice L.L. Gao$^1$, Ethan Y.H. Li$^2$, Matthew H.Y. Xie$^3$, Arthur L.B. Yang$^{4}$ and Zhong-Xue Zhang$^{5}$\\[6pt]

$^{1}$School of Mathematics and Statistics,\\
Northwestern Polytechnical University, Xi'an, Shaanxi 710072, P.R. China

$^{2}$School of Mathematics and Statistics,\\
Shaanxi Normal University, Xi'an, Shaanxi 710119, P.R. China

$^{3}$College of Science, \\
Tianjin University of Technology, Tianjin 300384, P. R. China

$^{4,5}$Center for Combinatorics, LPMC\\
Nankai University, Tianjin 300071, P. R. China\\[6pt]

Email: $^{1}${\tt llgao@nwpu.edu.cn},
       $^{2}${\tt yinhao\_li@snnu.edu.cn},
      $^{3}${\tt xie@email.tjut.edu.cn},
     $^{4}${\tt yang@nankai.edu.cn},
     $^{5}${\tt zhzhx@mail.nankai.edu.cn}
\end{center}
\noindent\textbf{Abstract.}
Inspired by the notion of equivariant log-concavity, we introduce the concept of induced log-concavity for a sequence of representations of a finite group.
For an equivariant matroid equipped with a symmetric group action or a finite general linear group action, we transform the problem of proving the induced log-concavity of matroid invariants to that of proving the Schur positivity of symmetric functions. We prove the induced log-concavity of the equivariant Kazhdan-Lusztig polynomials of $q$-niform matroids equipped with the  action of a finite general linear group, as well as that of  the equivariant Kazhdan-Lusztig polynomials of uniform matroids equipped with the action of a symmetric group.
As a consequence of the former, we obtain the log-concavity of Kazhdan-Lusztig polynomials of $q$-niform matroids, thus providing further positive evidence for Elias, Proudfoot and Wakefield's log-concavity conjecture on the matroid Kazhdan-Lusztig polynomials. From the latter we obtain the log-concavity of Kazhdan-Lusztig polynomials of uniform matroids, which was recently proved by Xie and Zhang by using a computer algebra approach.  We also establish the induced log-concavity of the equivariant characteristic polynomials and the equivariant inverse Kazhdan-Lusztig polynomials for $q$-niform matroids and uniform matroids.

\noindent \emph{AMS Classification 2020:} {05B35, 05E05, 20C30} 

\noindent \emph{Keywords:} induced log-concavity, Kazhdan-Lusztig polynomials, equivariant Kazhdan-Lusztig polynomials, \q-niform matroids, the Comparison Theorem of representations, the Frobenius characteristic map, Schur positivity.

\noindent \emph{Suggested running title:} Induced log-concavity of equivariant matroid invariants

\noindent \emph{Corresponding Author:} Arthur L.B. Yang, yang@nankai.edu.cn


\section{Introduction}

The main objective of this paper is to provide a general framework to study the
log-concavity of matroid invariants by introducing a new concept which generalizes equivariant log-concavity, called induced log-concavity. We would like to point out that this generalization is nontrivial and provides more freedom to use deep theory and tools from other branches of mathematics. As will be shown below, this new concept allows us to give a first proof of the log-concavity conjecture of the Kazhdan-Lusztig polynomials of $q$-niform matroids. The reason that we can not use the equivariant log-concavity to do so for the moment is that the proof will involve the Kronecker product of Schur functions, which is substantially difficult to understand and whose combinatorial interpretation remains as one of the central open problems in algebraic combinatorics. While the induced log-concavity enables us to reduce the log-concavity conjecture for $q$-niform matroids to certain Schur positivity problems concerning the well-understood ordinary product of Schur functions. It is worth mentioning that our approach simultaneously establishes the log-concavity of the Kazhdan-Lusztig polynomials of uniform matroids. We hope that this conceptual proposal could be applicable to more occasions.

Now let us first review some relevant background. Recall that a finite sequence $(a_0,\, a_1,\, \ldots,\, a_n)$ of real numbers is said to be unimodal if $a_0\le a_1\le\cdots\le a_i\ge a_{i+1}\ge\cdots\ge a_n$ for some $0\le i\le n$, and it is said to be log-concave if $a_i^2\ge a_{i-1}a_{i+1}$ for any $1\le i\le n-1$.
We say that $(a_0,\,a_1,\,\ldots,\,a_n)$ has no internal zeros if there do not exist integers $0\le i<j<k\le n$ satisfying $a_i\neq 0,\, a_j=0,\, a_k\neq 0$.
We also say that a polynomial
$a_0+a_1t+\cdots +a_n t^n$ with real coefficients has a certain property if its coefficient sequence
$(a_0,\,a_1,\,\ldots,\,a_n)$ does. It is clear that a nonnegative log-concave sequence with no internal zero must be unimodal.
Unimodal and log-concave sequences and polynomials are
ubiquitous in combinatorics, geometry, probability, and statistics.
For more information on the subject of unimodality and log-concavity, we refer the reader to
Stanley \cite{Stanley-1989}, Brenti \cite{Brenti-1994} and Br\"and\'en \cite{Branden-2014}.
Although various methods and theories have been developed for proving unimodality and log-concavity, there are still many challenging conjectures.

In recent years the log-concavity of matroid invariants has received considerable research attention, and for significant progress on some outstanding problems see
Huh \cite{Huh-2012}, Huh and Katz \cite{HK-2012},
Adiprasito, Huh, and Katz \cite{AHK-2018}, Br\"and\'en and Huh \cite{BH-2020},  Adila, Denham, and Huh \cite{ADH-2020}, and Braden, Huh, Matherne, Proudfoot and Wang \cite{BHMPW-2020}. This paper is mainly motivated by the log-concavity conjecture of matroid Kazhdan-Lusztig polynomials due to Elias, Proudfoot and Wakefield \cite{EPW-2016}.

Both the matroid Kazhdan-Lusztig polynomials and the classical Kazhdan-Lusztig polynomials for Coxeter groups \cite{KL-1979} are special cases of Kazhdan-Lusztig-Stanley polynomials.
Polo \cite{Polo-1999} showed that any polynomial with nonnegative coefficients and constant term $1$ appears as a Kazhdan-Lusztig polynomial associated to some pair of elements in some symmetric group. Hence, the classical Kazhdan-Lusztig polynomials need not 
be log-concave, while Elias, Proudfoot and Wakefield \cite{EPW-2016} conjectured that all matroid Kazhdan-Lusztig polynomials are log-concave. Gedeon, Proudfoot, and Young \cite{GPY-2017real} conjectured that each matroid Kazhdan-Lusztig polynomial has only real zeros, and showed their conjecture is valid for the graphical matroids associated with cycle graphs.
By Newton's inequality, the latter conjecture implies the former log-concavity conjecture.

Elias, Proudfoot and Wakefield's log-concavity conjecture remains open, and it was confirmed for several families of matroids, such as uniform matroids by Xie and Zhang \cite{XZ-2021}. Gedeon, Proudfoot, and Young's  real-rootedness conjecture was confirmed for some uniform matroids by Gao, Lu, Xie, Yang and Zhang \cite{GLXYZ-2018}, and for fan matroids, wheel matroids, and whirl matroids by Lu, Xie and Yang \cite{LXY-2018}.
The primary goal of this paper is to prove
Elias, Proudfoot and Wakefield's log-concavity conjecture for $q$-niform matroids, which were studied by Proudfoot in \cite{P-2019}.

In order to study the matroid Kazhdan-Lusztig polynomials,
other two families of matroid invariants were introduced, including the inverse Kazhdan-Lusztig polynomials defined by Gao and Xie \cite{GX-2021},  and the $Z$-polynomials defined by Proudfoot, Xu and Young \cite{PXY-2018}.
Gao and Xie \cite{GX-2021} conjectured that all inverse Kazhdan-Lusztig polynomials of matroids are log-concave and they proved this conjecture for uniform matroids. 
Proudfoot, Xu and Young \cite{PXY-2018} conjectured that all $Z$-polynomials are log-concave and even have only real zeros and they verified the real-rootedness for modular matroids. The real-rootedness of $Z$-polynomials were further confirmed for fan matroids, wheel matroids, and whirl matroids by Lu, Xie and Yang \cite{LXY-2018}, and for some uniform matroids by Gao, Lu, Xie, Yang and Zhang \cite{GLXYZ-2018}.

By imposing a group action on a matroid, Gedeon, Proudfoot and Young \cite{GPY-2017} defined the equivariant matroid Kazhdan-Lusztig polynomials, which form another important family of matroid invariants.
Both inverse Kazhdan-Lusztig polynomials and $Z$-polynomials have their equivariant counterparts; see
Proudfoot \cite{P-2021} and Proudfoot, Xu and Young \cite{PXY-2018}.
The equivariant Kazhdan-Lusztig polynomials provide more powerful tools to study the
ordinary Kazhdan-Lusztig polynomials.
Among these, the concept of equivariant log-concavity,
which was proposed by Gedeon, Proudfoot, and Young \cite{GPY-2017}, serves as a natural extension of the notion of log-concavity for matroid invariants.

To recall the definition of equivariant log-concavity, we shall adopt most of the notations and symbols in \cite{GPY-2017}. For a finite group $W$,
let $\Rep(W)$ denote the set of honest representations of $W$ over $\mathbb{C}$, and
let $\VRep(W)$ denote the ring of virtual representations, namely, the formal difference (with respect to direct sum) of two honest representations.
A sequence $(C_i)_{i\ge 0}$ in $\VRep(W)$ is said to be equivariantly log-concave if $C_{i}\otimes C_{i}-C_{i-1}\otimes C_{i+1}\in\Rep(W)$ for all $i > 0$, and it is said to be strongly equivariantly log-concave if $C_{i}\otimes C_{j}-C_{i-1}\otimes C_{j+1}\in\Rep(W)$ for all $1\le i\le j$, where the symbol $\otimes$ denotes the (internal) tensor product of representations. Sometimes we use $C_{i}\otimes C_{j}\supseteq C_{i-1}\otimes C_{j+1}$ to represent $C_{i}\otimes C_{j}-C_{i-1}\otimes C_{j+1}\in\Rep(W)$.
Letting $\dim\,C_i$ denote the dimension of $C_i$, it is clear that the equivariant log-concavity of
$(C_i)_{i\ge 0}$ implies the log-concavity of
$(\dim\,C_i)_{i\ge 0}$. Note that the notion of equivariant log-concavity can be carried over verbatim to
polynomials with coefficients in $\VRep(W)$, or to graded representations of $W$.

Suppose that $M$ is a matroid equipped with an action of
$W$, denoted by $W\curvearrowright M$.
Gedeon, Proudfoot, and Young \cite{GPY-2017} conjectured that for any equivariant matroid $W\curvearrowright M$ both the equivariant characteristic polynomial and  the equivariant Kazhdan-Lusztig polynomial are equivariantly log-concave, and they also made partial progress for equivariant uniform matroids.
Proudfoot, Xu and Young \cite{PXY-2018} also conjectured all equivariant $Z$-polynomials are strongly equivariantly log-concave, and verified their conjecture for certain $q$-analogue of equivariant Boolean matroids.
Matherne, Miyata, Proudfoot and Ramos \cite{MMPR-2021} provided more equivariantly log-concave conjectural examples associated with Orlik-Solomon algebras of matroids, Cordovil algebras of oriented matroids, and  Orlik-Terao algebras of hyperplane arrangements. They also gave computer assisted proofs to their conjectures for some special cases in low degrees by using the theory of representation stability.

Motivated by the notion of equivariant log-concavity and its connections to the ordinary log-concavity, we introduce a parallel concept, called induced log-concavity, whose definition is more subtle. Given a sequence $(C_i)_{i\ge 0}$ of virtual representations of a finite group $W$, the notion of induced log-concavity will be concerned with the external tensor product of $C_i$ and $C_j$, denoted by $C_i\boxtimes C_j$, which is considered as a representation of $W\times W$ and  different from the internal tensor product used in the definition of equivariant log-concavity. If there exist some finite group $G$, a subgroup $H$ of $G$, and a group homomorphism
$\phi\,:\, H\rightarrow W\times W$ such that
\begin{align*}
    \Ind_{H}^{G}\, (C_i\boxtimes C_i) - \Ind_{H}^{G}\, (C_{i-1}\boxtimes C_{i+1})\in\Rep(G)
\end{align*}
for any $i\ge 1$, where the external tensor product $C_i\boxtimes C_j$ is naturally considered as a representation of $H$ via the pullback of $\phi$,
then $(C_i)_{i\ge 0}$ is called inductively log-concave with respect to $G$, $H$ and $\phi$. If $G$, $H$ and $\phi$ is obvious from the context, we simply say that $(C_i)_{i\ge 0}$ is inductively log-concave. Furthermore, if
\begin{align*}
    \Ind_{H}^{G}\, (C_i\boxtimes C_j) - \Ind_{H}^{G}\, (C_{i-1}\boxtimes C_{j+1})\in\Rep(G)
\end{align*}
for any $1\le i\le j$, then $(C_i)_{i\ge 0}$ is called strongly inductively log-concave. Later we also use $    \Ind_{H}^{G}\, (C_i\boxtimes C_j)\supseteq \Ind_{H}^{G}\, (C_{i-1}\boxtimes C_{j+1})
$ to represent
$    \Ind_{H}^{G}\, (C_i\boxtimes C_j) - \Ind_{H}^{G}\, (C_{i-1}\boxtimes C_{j+1})\in\Rep(G)
$.
Note that if we take $G=H=W$ and take $\phi=\mathrm{diag}: W\rightarrow W\times W$ to be the diagonal embedding,
then the induced log-concavity reduces to the equivariant log-concavity. Thus the notion of induced log-concavity provides a more general framework to study the ordinary log-concavity of matroid invariants. The notion of induced log-concavity can also be carried over verbatim to polynomials with coefficients in $\VRep(W)$, or to graded representations of $W$.

In this paper we shall explore the use of induced log-concavity for proving the log-concavity of Kazhdan-Lusztig polynomials of uniform matroids and $q$-niform matroids. Our contributions are the following.

\begin{itemize}
\item[(I)] Two simple ways to generate new inductively log-concave polynomials from old ones. In particular, these can be used to produce equivariantly log-concave polynomials. (See Section \ref{sect-2}.)

\item[(II)] Schur positivity of some differences of products of Schur functions. (See Section \ref{sect-3}.)

\item[(III)] Induced log-concavity of equivariant characteristic polynomials,
Kazhdan-Lusztig polynomials, and inverse Kazhdan-Lusztig polynomials of $q$-niform matroids and uniform matroids. As a corollary, we obtain the log-concavity of the Kazhdan-Lusztig polynomials of $q$-niform matroids. (See Section \ref{sect-4}.)
\end{itemize}
%


\section{Induced log-concavity}\label{sect-2}

The aim of this section is two-fold. First, we present some properties of induced log-concavity of virtual representations, parallel to those of ordinary log-concavity of real numbers. Secondly, we show some connection between the induced log-concavity of representations of symmetric groups and that of unipotent representations of finite general linear groups.

Since we are mainly concerned with the log-concavity of polynomials, all log-concavity results of this section
are stated in terms of polynomials instead of sequences.
Given a finite group $W$, let $\mathrm{VRep}(W)[t]$ be the polynomial ring in the variable $t$ over
$\mathrm{VRep}(W)$.
For notational convenience, we use $t+\tau$ to denote the polynomial $\tau t+\tau$ in $\mathrm{VRep}(W)[t]$, where $\tau$ denotes the trivial representation of $W$.

Suppose that $J(t)\in \mathrm{VRep}(W)[t]$ is inductively log-concave with respect to some groups $G,H$ and some group homomorphism $\phi\,:\, H\rightarrow W\times W$.
In the following we shall use $J(t)$ to generate new inductively log-concave polynomials with respect to the same triple $(G,H,\phi)$. To this end, we require that the triple $(G,H,\phi)$ should satisfy the following symmetric property:

($\diamondsuit$) For any two virtual representations $C,D\in \mathrm{VRep}(W)$, the induced representation $\Ind_H^G\, (C\boxtimes D)$ is isomorphic to $\Ind_H^G\, (D\boxtimes C)$.

For example, the triple $(W,W,\mathrm{diag}: W\rightarrow W\times W)$ satisfies the above property, and thus the following two propositions hold in particular for equivariant log-concavity. It is well known that if $J(t)\in\mathbb{R}[t]$ is a log-concave polynomial with nonnegative coefficients, then $(t+1)J(t)$ is also log-concave. Motivated by this, we obtained the following result.

\begin{prop}\label{prop-times (t+1)}
 Suppose that $W$ is a finite group, the triple $(G,H,\phi)$ has property ($\diamondsuit$), and $J(t)$ is a polynomial in $\mathrm{VRep}(W)[t]$ with coefficients being honest representations. If $J(t)$ is strongly inductively log-concave with respect to a triple $(G,H,\phi)$, then so is the polynomial
  $(t+\tau)\cdot J(t)$.
\end{prop}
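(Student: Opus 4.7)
The plan is to expand the product and regroup carefully. Write $J(t)=\sum_{i\geq 0} C_i t^i$ with each $C_i \in \Rep(W)$ (setting $C_{-1} := 0$), so that $(t+\tau)\cdot J(t) = \sum_{i\geq 0} D_i t^i$ with $D_i = C_i + C_{i-1}$, using $\tau\otimes C_i = C_i$. Fix indices $1\leq i\leq j$; I would expand $D_i\boxtimes D_j - D_{i-1}\boxtimes D_{j+1}$ by distributing $\boxtimes$ over direct sums. A short calculation shows the two occurrences of $C_{i-1}\boxtimes C_j$ cancel, and the six surviving summands regroup as
\[
[C_i\boxtimes C_j - C_{i-1}\boxtimes C_{j+1}] + [C_{i-1}\boxtimes C_{j-1} - C_{i-2}\boxtimes C_j] + [C_i\boxtimes C_{j-1} - C_{i-2}\boxtimes C_{j+1}].
\]

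The first two brackets lie in $\Rep(G)$ after applying $\Ind_H^G$ directly by the strong inductive log-concavity of $J(t)$ at the index pairs $(i,j)$ and $(i-1,j-1)$ respectively; in the boundary case $i=1$ the second bracket degenerates to the honest representation $C_0\boxtimes C_{j-1}$ via $C_{-1}=0$. The third bracket I would split as
\[
[C_i\boxtimes C_{j-1} - C_{i-1}\boxtimes C_j] + [C_{i-1}\boxtimes C_j - C_{i-2}\boxtimes C_{j+1}],
\]
so that for $i<j$ with $i\geq 2$ both summands are handled by strong inductive log-concavity at $(i,j-1)$ and $(i-1,j)$. The boundary case $i=1$ requires no splitting at all: the third bracket reduces to the honest representation $C_1\boxtimes C_{j-1}$.

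The one genuinely delicate case is $i=j\geq 2$, where the first summand of the split becomes $C_j\boxtimes C_{j-1} - C_{j-1}\boxtimes C_j$, which is \emph{not} an honest representation since $\boxtimes$ is not symmetric as a $W\times W$-module. This is precisely where property ($\diamondsuit$) enters: it provides $\Ind_H^G(C_j\boxtimes C_{j-1})\cong \Ind_H^G(C_{j-1}\boxtimes C_j)$, so after inducing this offending difference vanishes and the third bracket collapses to $\Ind_H^G(C_{j-1}\boxtimes C_j - C_{j-2}\boxtimes C_{j+1})$, which lies in $\Rep(G)$ by strong inductive log-concavity at $(j-1,j)$. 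The main (minor) obstacle is the bookkeeping of the boundary cases $i=1$ and $i=j$; modulo these, the argument is pure distributivity and reorganization. Summing the three brackets then yields
\[
\Ind_H^G(D_i\boxtimes D_j) - \Ind_H^G(D_{i-1}\boxtimes D_{j+1}) \in \Rep(G),
\]
establishing the strong inductive log-concavity of $(t+\tau)\cdot J(t)$.
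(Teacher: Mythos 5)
Your proposal is correct and follows essentially the same route as the paper: after expanding and cancelling the $C_{i-1}\boxtimes C_j$ terms, your three brackets (with the third split as you describe) are exactly the paper's four-summand decomposition, each handled by strong inductive log-concavity at the pairs $(i,j)$, $(i-1,j-1)$, $(i,j-1)$, $(i-1,j)$, with property ($\diamondsuit$) invoked in the same way to make the offending summand vanish when $i=j$. The boundary bookkeeping via $C_{-1}=0$ matches the paper's conventions as well.
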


\begin{proof} Assume that
$J(t)=\sum_{i=0}^n C_it^i$. By the hypothesis, for any $1\le i\le j\le n-1$ we have
    \begin{align}\label{equ-ind-strongly-tool}
        \Ind_H^G\, (C_i\boxtimes C_j)-\Ind_H^G\, (C_{i-1}\boxtimes C_{j+1})\in\Rep(G).
    \end{align}
Since $C_i\otimes \tau = C_i$ in the sense of isomorphism,
we get
    \begin{align*}
        (t+\tau)\cdot J(t)
        =\sum_{k=0}^{n+1} (C_{k-1}+ C_{k}) t^{k},
    \end{align*}
where we set $C_{-1}=C_{n+1}=0$. Now, for any  $1\le i\le j\le n$, one can verify that
\begin{align*}
   &\Ind_H^G\, \left((C_{i-1}+ C_{i})\boxtimes (C_{j-1}+ C_{j})\right)-
      \Ind_H^G\, \left((C_{i-2}+ C_{i-1})\boxtimes (C_{j}+ C_{j+1})\right)\\
   &=\left(\Ind_H^G\,(C_{i-1}\boxtimes C_{j-1})-\Ind_H^G\,(C_{i-2}\boxtimes C_{j})\right)+
       \left(\Ind_H^G\,(C_{i}\boxtimes C_{j})-\Ind_H^G\,(C_{i-1}\boxtimes C_{j+1})\right)\\
   &+ \left(\Ind_H^G\,(C_{i}\boxtimes C_{j-1})-\Ind_H^G\,(C_{i-1}\boxtimes C_{j})\right)+
       \left(\Ind_H^G\,(C_{i-1}\boxtimes C_{j})-\Ind_H^G\,(C_{i-2}\boxtimes C_{j+1})\right),
\end{align*}
    which is an honest representation of $G$ by \eqref{equ-ind-strongly-tool}.
    A little caution is needed when dealing with the case of $i=j$, for which the third direct summand vanishes due to property ($\diamondsuit$) and the remaining direct summands are honest by \eqref{equ-ind-strongly-tool}. The proof is complete.
\end{proof}

Another known observation on log-concavity is that if $J(t)\in\mathbb{R}[t]$ is a log-concave polynomial with nonnegative coefficients, then so is $J(t+1)$; see \cite{Brenti-1994,Lenz-2013}. Inspired by this fact, we obtain the following result.

\begin{prop}\label{prop-J(t+1)}
   Suppose that $W$ is a finite group, the triple $(G,H,\phi)$ has property ($\diamondsuit$), and $J(t)$ is a polynomial in $\mathrm{VRep}(W)[t]$ with coefficients being honest representations. If $J(t)$ is strongly inductively log-concave with respect to a triple $(G,H,\phi)$, then so is the polynomial $J(t+\tau)$.
\end{prop}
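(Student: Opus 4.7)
The plan is to mimic the direct expansion argument used in the proof of Proposition~\ref{prop-times (t+1)}, now applied to the Taylor shift $t\mapsto t+\tau$ rather than to multiplication by $(t+\tau)$. Writing $J(t)=\sum_{i=0}^{n}C_{i}t^{i}$ and using the binomial expansion $(t+\tau)^{i}=\sum_{k=0}^{i}\binom{i}{k}t^{k}$ (valid because $\tau$ is the multiplicative identity of $\VRep(W)$), one obtains
\[
J(t+\tau)=\sum_{k=0}^{n}D_{k}\,t^{k},\qquad D_{k}=\sum_{i=k}^{n}\binom{i}{k}C_{i}\in\Rep(W).
\]
To establish strong inductive log-concavity of $J(t+\tau)$, I fix $1\le i\le j$ and, by bilinearity of $\boxtimes$ together with additivity of $\Ind_H^G$, expand
\[
\Ind_{H}^{G}(D_{i}\boxtimes D_{j})-\Ind_{H}^{G}(D_{i-1}\boxtimes D_{j+1})=\sum_{a,b}\gamma_{a,b}\,X_{a,b},
\]
where $X_{a,b}:=\Ind_{H}^{G}(C_{a}\boxtimes C_{b})$ and $\gamma_{a,b}=\binom{a}{i}\binom{b}{j}-\binom{a}{i-1}\binom{b}{j+1}$.

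Next, invoking property ($\diamondsuit$), which gives $X_{a,b}\cong X_{b,a}$, I symmetrize the coefficient of each $X_{a,b}$ to $\tfrac{1}{2}(\gamma_{a,b}+\gamma_{b,a})$ without changing the value of the sum. Grouping terms along each antidiagonal $a+b=m$ and performing Abel summation, together with the iterated ``inward monotonicity'' $X_{a,b}\supseteq X_{a-1,b+1}$ for $1\le a\le b$ (which follows from the strong inductive log-concavity hypothesis on $J$), the difference can be rewritten as
\[
\sum_{a\le b}\alpha_{a,b}\bigl(X_{a,b}-X_{a-1,b+1}\bigr),
\]
where each $\alpha_{a,b}$ is an explicit partial sum, along the antidiagonal $a+b=m$, of the symmetrized $\gamma$'s. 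Each bracketed term lies in $\Rep(G)$ by hypothesis, so the proof reduces to showing $\alpha_{a,b}\ge 0$ for all valid indices.

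The main obstacle is exactly this combinatorial positivity: the Abel-summation partial sums of the symmetrized $\gamma$'s are sums of differences of products of binomial coefficients, and their non-negativity reduces to a family of classical-flavored binomial identities. These can be handled either by direct manipulation using the Chu--Vandermonde identity, or by an induction on the antidiagonal level $m$ (or, alternatively, on $\deg J$). The whole argument parallels the standard real-number proof (cited in the excerpt above) that $J(t+1)$ is log-concave whenever $J(t)$ is, now executed at the level of virtual representations.
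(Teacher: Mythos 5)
Your setup is fine and your reduction is the natural ``direct'' alternative to the paper's argument: expand $D_k=\sum_{i\ge k}\binom{i}{k}C_i$, symmetrize via ($\diamondsuit$), and try to write $\Ind_H^G(D_i\boxtimes D_j)-\Ind_H^G(D_{i-1}\boxtimes D_{j+1})$ as a nonnegative integer combination of the honest pieces $X_{a,b}$ ($a\le b$) and the honest differences $X_{a,b}-X_{a-1,b+1}$ ($1\le a\le b$). But the proof stops exactly where the real work begins. After the Abel summation along an antidiagonal $a+b=m$, everything hinges on the nonnegativity of the partial sums $\alpha_{a,b}$ of the symmetrized $\gamma$'s taken from the diagonal outward. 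These are universal integers depending on $i,j,m,a$ (and $\deg J$), and their nonnegativity is precisely the content of the proposition: it does not follow from the truth of the statement at the level of dimensions, since that only controls the total value for particular numerical sequences, not the coefficients in your universal decomposition. The only identity that comes for free from Chu--Vandermonde is that the \emph{full} antidiagonal sums vanish, because $\sum_{a+b=m}\binom{a}{i}\binom{b}{j}=\binom{m+1}{i+j+1}=\sum_{a+b=m}\binom{a}{i-1}\binom{b}{j+1}$; the partial sums are a genuinely different and harder claim, and ``direct manipulation using Chu--Vandermonde, or an induction on $m$'' is an assertion, not an argument. As written, the proposal is therefore a strategy with the key combinatorial lemma left unproved.

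For contrast, the paper sidesteps this binomial positivity problem by inducting on $\deg J$: writing $J(t+\tau)=C_0+(t+\tau)\cdot K(t)$ with $K(t)=\sum_{i}C_{i+1}(t+\tau)^i=\sum_i D_it^i$, it notes that $C_0+t\cdot J(t+\tau)=(t+\tau)\bigl(C_0+t\cdot K(t)\bigr)$, so by Proposition \ref{prop-times (t+1)} and the induction hypothesis (applied to $K$) the only conditions left to check are the cross terms
\begin{align*}
\Ind_H^G\,(D_0\boxtimes D_i)-\Ind_H^G\,(C_0\boxtimes D_{i+1})\in\Rep(G),
\end{align*}
and these follow from the hypothesis \eqref{eq-induct-step} together with the single hockey-stick identity $\sum_{\ell=1}^{k-i-1}\binom{k-\ell-1}{i}=\binom{k-1}{i+1}$. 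If you wish to complete your route, you must actually prove the partial-sum inequality for the symmetrized coefficients $\binom{a}{i}\binom{b}{j}-\binom{a}{i-1}\binom{b}{j+1}$; that lemma is of comparable difficulty to the proposition itself, which is exactly why the paper's reduction is arranged the way it is.
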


\begin{proof}
    The proof is by induction on the degree $n$ of $J(t)$. We may assume that $n\geq 2$. Suppose that
      $$J(t)=C_0+C_1t+C_2t^2$$
    is strongly inductively log-concave with respect to  $(G,H,\phi)$, namely
    \begin{align}\label{equ-base-case}
        \Ind_H^G\, (C_1\boxtimes C_1)-\Ind_H^G\, (C_{0}\boxtimes C_{2})\in\Rep(G).
    \end{align}
    Note that
       $$J(t+\tau)=(C_0+ C_1+ C_2)+(C_1+ 2 C_2)t+C_2t^2.$$
    We need to prove that
    \begin{align*}
        \Ind_H^G\, ((C_1+ 2C_2)\boxtimes (C_1+ 2C_2))-
           \Ind_H^G\, ((C_0+ C_1+ C_2)\boxtimes C_{2})\in\Rep(G).
    \end{align*}
    By using \eqref{equ-base-case}, one can verify that
    \begin{align*}
        \Ind_H^G\, ((C_1+ 2C_2)\boxtimes (C_1+ 2C_2))&\supseteq \Ind_H^G\, (C_1\boxtimes C_1)+ \Ind_H^G\, (C_1\boxtimes C_2)+ \Ind_H^G\, (C_2\boxtimes C_2)\\
        &\supseteq \Ind_H^G\, (C_0\boxtimes C_2)+ \Ind_H^G\, (C_1\boxtimes C_2)+
          \Ind_H^G\, (C_2\boxtimes C_2)\\
        &=\Ind_H^G\, ((C_0+ C_1+ C_2)\boxtimes C_2).
    \end{align*}
    This completes the proof of the base case.

    Now assume the assertion is true for polynomials of degree less than $n+1$. We proceed to prove the assertion for a strongly inductively log-concave polynomial $J(t)$ of degree $n+1$ in $\mathrm{VRep}(W)[t]$. Suppose that $J(t)=\sum_{i=0}^{n+1}C_it^{i}$ with $C_i\in \mathrm{Rep}(W)$. Then, for any $1\leq i\leq j\leq n$, we have
    \begin{align}\label{eq-induct-step}
       \Ind_H^G\, (C_i\boxtimes C_j)-\Ind_H^G\, (C_{i-1}\boxtimes C_{j+1})\in\Rep(G).
    \end{align}
    If we write
    $$K(t)=\sum_{i=0}^{n}C_{i+1}(t+\tau)^{i}=\sum_{i=0}^{n}D_i t^{i},$$
    then
    \begin{align*}
        J(t+\tau)=C_{0}+(t+\tau)\cdot K(t), \quad D_i=\sum_{j=i}^n \binom{j}{i}C_{j+1} \mbox{ for } 0\leq i\leq n.
    \end{align*}
    Observe that if
     $$C_{0} + t\cdot J(t+\tau)=C_{0} + t\cdot ( C_{0}+ (t+\tau)\cdot K(t))
      =(t+\tau)\cdot(C_{0}+ t\cdot K(t))$$
    is strongly inductively log-concave, then so is $J(t+\tau)$.
    Thus it suffices to show that $C_{0} + t\cdot J(t+\tau)$
    is strongly inductively log-concave.
    By Proposition \ref{prop-times (t+1)}, we only need to prove that $C_{0}+ t\cdot K(t)$ is strongly inductively log-concave. By the induction hypothesis, we know that $K(t)$, and hence $t\cdot K(t)$, is strongly inductively log-concave with respect to  $(G,H,\phi)$.
    Thus, we only need to check that for any $0\le i\le n-1$
    \begin{align*}
        \Ind_H^G\, (D_0\boxtimes D_i)-\Ind_H^G\,(C_0\boxtimes D_{i+1})\in\Rep(G).
    \end{align*}
We find that
    \begin{align*}
        \Ind_H^G\, (D_0\boxtimes D_i)&=\Ind_H^G\, \left(\left(\sum_{j_1=0}^n C_{j_1+1}\right)\boxtimes \left(\sum_{j_2=i}^n \binom{j_2}{i} C_{j_2+1}\right)\right)\\[5pt]
        &=\sum_{j_1=0}^n\sum_{j_2=i}^n \Ind_H^G\, \left( C_{j_1+1}\boxtimes \binom{j_2}{i}C_{j_2+1}\right)\\[5pt]
        &=\sum_{\ell=1}^{n+1}\sum_{k=\ell+i+1}^{\ell+n+1} \Ind_H^G\, \left( C_{\ell}\boxtimes \binom{k-\ell-1}{i}C_{k-\ell}\right)\\[5pt]
        &\supseteq\sum_{\ell=1}^{n-i}\sum_{k=\ell+i+1}^{n+1} \Ind_H^G\, \left( C_{\ell}\boxtimes \binom{k-\ell-1}{i}C_{k-\ell}\right)\\[5pt]
        &=\sum_{k=i+2}^{n+1}\sum_{\ell=1}^{k-i-1} \Ind_H^G\,\left(C_{\ell}\boxtimes \binom{k-\ell-1}{i}C_{k-\ell}\right)\\[5pt]
        &\supseteq \sum_{k=i+2}^{n+1} \Ind_H^G\,\left(C_{0}\boxtimes \left(\sum_{\ell=1}^{k-i-1} \binom{k-\ell-1}{i}\right)C_{k}\right),
\end{align*}
where the last inclusion is derived from   \eqref{eq-induct-step} and $(\diamondsuit)$.

By successively applying
$\binom{k}{i+1}+\binom{k}{i}=\binom{k+1}{i+1}$, we see that $\sum_{\ell=1}^{k-i-1} \tbinom{k-\ell-1}{i}=\tbinom{k-1}{i+1}$. Thus
\begin{align*}
        \Ind_H^G\, (D_0\boxtimes D_i)&\supseteq \sum_{k=i+2}^{n+1} \Ind_H^G\,\left(C_{0}\boxtimes \tbinom{k-1}{i+1} C_{k}\right)\\[5pt]
&=\Ind_H^G\,\left(C_0\boxtimes \left(\sum_{j=i+1}^n \binom{j}{i+1}C_{j+1}\right)\right)=\Ind_H^G\,(C_0\boxtimes D_{i+1}),
    \end{align*}
    as desired.
This completes the proof.
\end{proof}

Now we turn to the second purpose of this section.
As shown before, we are unaware which triple
$(G,H,\phi)$ having property ($\diamondsuit$) is used in the preceding two propositions.
However, the choice of the triple is critical
for practical applications of induced log-concavity.
For our purpose here, we will introduce a feasible way to choose the triple $(G,H,\phi)$ for symmetric groups and finite general linear groups.

Let us first consider the induced log-concavity of representations of a symmetric group. To set up the necessary machinery, we briefly review some basic facts about the representation theory of symmetric groups.
Let $\mathfrak{S}_n$ denote the symmetric group of all permutations of the finite set $\{1,2,\ldots ,n\}$.
The irreducible representations of $\mathfrak{S}_n$ can be indexed by partitions of $n$.
Recall that a partition $\lambda$ of $n$, denoted by $\lambda\vdash n$, is a sequence $(\lambda_1,\lambda_2,\ldots,\lambda_l)$ of weakly decreasing nonnegative integers $\lambda_i$ such that $\sum_{i=1}^l\lambda_i=n$.
We usually use the notation $V_{\lambda}$ to denote the irreducible representation of $\mathfrak{S}_n$ associated with the partition $\lambda$.
Given any integer $k\le n$ and two representations $V\in\Rep(\mathfrak{S}_k),\,V'\in\Rep(\mathfrak{S}_{n-k})$,  define
\begin{align}\label{formula-induced-produce}
    V\ast V':=\Ind_{\mathfrak{S}_k\times \mathfrak{S}_{n-k}}^{\mathfrak{S}_{n}}\left(V\boxtimes V'\right).
\end{align}
There is a natural isomorphism between $\oplus_{n\geq 0}\mathrm{VRep}(\mathfrak{S}_n)$ and the ring $\Lambda_\mathbb{Z}$ of symmetric functions over $\mathbb{Z}$, called the
Frobenius characteristic map $\mathrm{ch}$, which sends
each irreducible representation $V_{\lambda}$ to the Schur function $s_{\lambda}$ and satisfies $\mathrm{ch}(V\ast V')=\mathrm{ch} V\cdot \mathrm{ch} V'$. More information on  $\Lambda_\mathbb{Z}$ and $s_{\lambda}$ will be given in Section \ref{sect-3}.
To study the induced log-concavity of a sequence $(C_i)_{i\ge 0}$ of virtual representations of $\mathfrak{S}_{n}$, we will take $G=\mathfrak{S}_{2n}$, $H=\mathfrak{S}_{n}\times \mathfrak{S}_{n}$, and $\phi=\mathrm{id}$, the identity map. Thus, the sequence
$(C_i)_{i\ge 0}$ is inductively log-concave with respect to $(\mathfrak{S}_{2n}, \mathfrak{S}_{n}\times \mathfrak{S}_{n}, \mathrm{id})$ if
$C_i\ast C_i-C_{i+1}\ast C_{i-1}\in \Rep(\mathfrak{S}_{2n})$ for any $i\geq 1$.
The reason for such a choice is that
the Frobenius characteristic map enables us
to transform the problem of proving the induced log-concavity of certain representations of symmetric groups to that of proving the Schur positivity of the corresponding symmetric functions.


For finite general linear groups, this paper is mainly concerned with the induced log-concavity of unipotent representations.
Given a prime power $q$, let $\mathbb{F}_q$ be a finite field of order $q$, and let $\GL_n(\mathbb{F}_q)$ be the finite general linear group over $\mathbb{F}_q$.
Let $B_n(\mathbb{F}_q)\subseteq \GL_n(\mathbb{F}_q)$ denote the Borel subgroup composed of invertible upper triangular matrices.
The irreducible unipotent representations of $\GL_n(\mathbb{F}_q)$ are the composition factors of the representation $\mathbb{C}[\GL_n(\mathbb{F}_q)/B_n(\mathbb{F}_q)]$.
If a representation is isomorphic to a direct sum of irreducible unipotent representations, then it is called a unipotent representation.
The irreducible unipotent representations of $\GL_n(\mathbb{F}_q)$ can also be indexed by partitions of $n$; see Curtis \cite[Theorem B]{C-1975}. We save the notation $V_{\lambda}(q)$ for the associated irreducible unipotent representation of $\GL_n(\mathbb{F}_q)$ associated with $\lambda$.
For a pair of natural numbers $k\leq n$, let $P_{k,n}(\mathbb{F}_q) \subseteq \GL_{n}(\mathbb{F}_q)$ denote the parabolic subgroup associated with the $\mathrm{Levi}\,\GL_k(\mathbb{F}_q)\times \GL_{n-k}(\mathbb{F}_q)$. Suppose that $V(q)$ is a unipotent representation of $\GL_k(\mathbb{F}_q)$, and $V'(q)$ is a unipotent representation of $\GL_{n-k}(\mathbb{F}_q)$. It is possible to consider $V(q)\boxtimes V'(q)$ as a representation of $P_{k,n}(\mathbb{F}_q)$ by using the natural surjection
$\iota: P_{k,n}(\mathbb{F}_q) \longrightarrow \GL_{k}(\mathbb{F}_q) \times \GL_{n-k}(\mathbb{F}_q)$.
The Harish-Chandra induction
is defined by
\begin{align*}
    V(q)\ast V'(q):=\Ind_{P_{k,n}(\mathbb{F}_q)}^{\GL_{n}(\mathbb{F}_q)}\left(V(q)\boxtimes V'(q)\right).
\end{align*}
It is known that $V(q)\ast V'(q)$ is a unipotent representation of $\GL_{n}(\mathbb{F}_q)$.
%
Let $\mathrm{VURep}(\GL_{n}(\mathbb{F}_q))$ denote the set of virtual unipotent representations of $\GL_{n}(\mathbb{F}_q)$. There is a canonical bijection $\psi$ between $\mathrm{VURep}(\GL_{n}(\mathbb{F}_q))$
and $\mathrm{VRep}(\mathfrak{S}_n)$ by taking  $\psi(V_\lambda(q))=V_{\lambda}$ and then extending linearly. The representation theory of symmetric groups and the unipotent representation theory of finite general linear groups behave very similarly in the sense of the following Comparison Theorem.

\begin{thm}[{\cite[Theorem B]{C-1975}}]\label{thm-Comparison}
Fix a prime power $q$ and a pair of natural numbers $k\le n$, and let $\lambda$, $\mu$ and  $\nu$ be partitions of $n$, $k$ and $n-k$ respectively.
Then the multiplicity of $V_\lambda(q)$ in  $V_{\mu}(q)* V_{\nu}(q)$ is equal to the multiplicity of $V_\lambda$ in  $V_{\mu} * V_{\nu}$.
\end{thm}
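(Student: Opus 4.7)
The plan is to reduce both multiplicities to the same computation inside an Iwahori-Hecke algebra of type $A$ and then invoke Tits' deformation principle. The whole argument is organized around the endomorphism algebra $H_n(q):=\mathrm{End}_{\GL_n(\mathbb{F}_q)}\mathbb{C}[\GL_n(\mathbb{F}_q)/B_n(\mathbb{F}_q)]$, which is simultaneously controlled by the geometry of $\GL_n(\mathbb{F}_q)$ (through Harish-Chandra induction) and by the combinatorics of $\mathfrak{S}_n$ (through the Bruhat decomposition).

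First I would identify $H_n(q)$ with the type $A_{n-1}$ Iwahori-Hecke algebra, using the $B_n(\mathbb{F}_q)$-double-coset decomposition indexed by $\mathfrak{S}_n$; its structure constants depend polynomially on $q$ and specialize at $q=1$ to $\mathbb{C}\mathfrak{S}_n$. The functor $F_n:=\mathrm{Hom}_{\GL_n(\mathbb{F}_q)}(\mathbb{C}[\GL_n(\mathbb{F}_q)/B_n(\mathbb{F}_q)],-)$ then restricts to an equivalence between the unipotent block of $\GL_n(\mathbb{F}_q)$ and the category of finite-dimensional $H_n(q)$-modules, sending the irreducible $V_\lambda(q)$ to the simple Specht-type module $S^\lambda(q)$ indexed by the same partition $\lambda\vdash n$. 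The $q=1$ specialization of $S^\lambda(q)$ is precisely the Specht module $V_\lambda$ of $\mathfrak{S}_n$, by the normalization of the Curtis bijection $\psi$ recalled just before the theorem.

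Next I would prove the key commuting square
\begin{equation*}
F_n\circ \Ind_{P_{k,n}(\mathbb{F}_q)}^{\GL_n(\mathbb{F}_q)}\;\cong\;\Ind_{H_k(q)\otimes H_{n-k}(q)}^{H_n(q)}\circ(F_k\boxtimes F_{n-k}),
\end{equation*}
which relies on transitivity of induction and on the identification of $H_k(q)\otimes H_{n-k}(q)$ with the natural Hecke subalgebra of $H_n(q)$ cut out by the parabolic $P_{k,n}(\mathbb{F}_q)$. Combined with the previous step, this square translates the multiplicity $[V_\mu(q)\ast V_\nu(q):V_\lambda(q)]$ into the Hecke-algebra multiplicity $[\Ind_{H_k(q)\otimes H_{n-k}(q)}^{H_n(q)}(S^\mu(q)\boxtimes S^\nu(q)):S^\lambda(q)]$.

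Finally I would close the argument by Tits' deformation theorem: $H_n(q)$ is semisimple for every prime power $q$ (and at $q=1$), the families $\{S^\lambda(q)\}$ and their parabolic inductions vary polynomially in $q$, and the decomposition multiplicities of a family of modules over a generically semisimple flat family of algebras are locally constant in the parameter. Specializing at $q=1$ they become the classical Littlewood-Richardson multiplicity $[V_\mu\ast V_\nu:V_\lambda]$, yielding the desired equality. The main obstacle is the clean verification of the commuting square between Harish-Chandra induction and Hecke-algebra induction without accidentally picking up extra multiplicities; this is the heart of Curtis's original argument and requires delicate bookkeeping with the idempotent that realizes the functor $F_n$ and with the Mackey-type decomposition of the double parabolic induction.
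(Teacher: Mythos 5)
The paper does not prove this statement at all: it is quoted verbatim as Theorem B of Curtis \cite{C-1975}, so there is no internal proof to compare against. Your outline is essentially the standard (and Curtis-style) argument — identify $\mathrm{End}_{\GL_n(\mathbb{F}_q)}\mathbb{C}[\GL_n(\mathbb{F}_q)/B_n(\mathbb{F}_q)]$ with the type $A$ Iwahori--Hecke algebra, use $\mathrm{Hom}_{\GL_n(\mathbb{F}_q)}(\mathbb{C}[\GL_n(\mathbb{F}_q)/B_n(\mathbb{F}_q)],-)$ to match unipotent constituents with simple Hecke modules, check compatibility of Harish--Chandra induction with parabolic induction of Hecke modules, and conclude by Tits' deformation at $q=1$ — and it is sound as a sketch, with the genuinely hard ingredient (the commuting square for induction, i.e.\ the Howlett--Lehrer/Curtis comparison) correctly identified rather than silently assumed.
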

To study the induced log-concavity of a sequence $(C_i(q))_{i\ge 0}$ of virtual unipotent representations of $\GL_n(\mathbb{F}_q)$, we will take $G=\GL_{2n}(\mathbb{F}_q)$, $H=P_{n,2n}(\mathbb{F}_q)$, and $\phi=\iota$, the natural surjection from $P_{n,2n}(\mathbb{F}_q)$ to $\GL_{n}(\mathbb{F}_q)\times \GL_{n}(\mathbb{F}_q)$. Based on the Comparison Theorem,
we immediately obtain the following result.

\begin{prop}\label{prop-unirep-to-irrdrep}
A sequence $(C_i(q))_{i\ge 0}$ of virtual unipotent representations of $\GL_n(\mathbb{F}_q)$ is inductively log-concave with respect to $(\GL_{2n}(\mathbb{F}_q), P_{n,2n}(\mathbb{F}_q),\iota)$ if and only if $(\psi(C_i(q)))_{i\ge 0}$ is inductively log-concave with respect to $(\mathfrak{S}_{2n}, \mathfrak{S}_n\times \mathfrak{S}_n, \mathrm{id})$ as a sequence of virtual representations of $\mathfrak{S}_n$.
\end{prop}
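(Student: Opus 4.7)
The plan is to reduce both inductive log-concavity conditions to statements about nonnegativity of integer coefficients in the irreducible bases, and then show that these coefficients coincide on the two sides via the Comparison Theorem.

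First, I would expand each term in the irreducible basis. Write $C_i(q)=\sum_{\mu\vdash n} a_{i,\mu}V_\mu(q)$ with $a_{i,\mu}\in\mathbb{Z}$; by the definition of $\psi$ and its $\mathbb{Z}$-linearity, $\psi(C_i(q))=\sum_{\mu\vdash n} a_{i,\mu}V_\mu$. Using bilinearity of the external tensor product $\boxtimes$ together with additivity of Harish-Chandra induction (respectively of the symmetric group operation $\ast$), expand
\[
C_i(q)\ast C_j(q)=\sum_{\mu,\nu\vdash n}a_{i,\mu}a_{j,\nu}\,V_\mu(q)\ast V_\nu(q),
\]
and similarly $\psi(C_i(q))\ast \psi(C_j(q))=\sum_{\mu,\nu\vdash n}a_{i,\mu}a_{j,\nu}\,V_\mu\ast V_\nu$. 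Subtracting the analogous expansions indexed by $(i-1,i+1)$ produces a virtual representation on each side whose expansion coefficients $a_{i,\mu}a_{i,\nu}-a_{i-1,\mu}a_{i+1,\nu}$ are identical.

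Second, I would appeal to the general fact that a virtual unipotent representation of $\GL_{2n}(\mathbb{F}_q)$ lies in $\Rep(\GL_{2n}(\mathbb{F}_q))$ precisely when its multiplicity on every irreducible unipotent $V_\lambda(q)$, $\lambda\vdash 2n$, is nonnegative, and the analogous statement for virtual $\mathfrak{S}_{2n}$-representations with respect to the $V_\lambda$. Both differences in question are virtual unipotent on their respective sides, since Harish-Chandra induction sends unipotent representations to unipotent representations. Theorem \ref{thm-Comparison} now supplies the key identity
\[
\bigl[V_\lambda(q):V_\mu(q)\ast V_\nu(q)\bigr]=\bigl[V_\lambda:V_\mu\ast V_\nu\bigr]
\]
for every $\lambda\vdash 2n$ and $\mu,\nu\vdash n$. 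Combining this with the expansions from the previous step shows that the multiplicity of $V_\lambda(q)$ in $C_i(q)\ast C_i(q)-C_{i-1}(q)\ast C_{i+1}(q)$ equals the multiplicity of $V_\lambda$ in $\psi(C_i(q))\ast \psi(C_i(q))-\psi(C_{i-1}(q))\ast \psi(C_{i+1}(q))$.

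Finally, for a fixed $i\ge 1$, the $\GL$-side difference lies in $\Rep(\GL_{2n}(\mathbb{F}_q))$ iff all these multiplicities are nonnegative, iff the $\mathfrak{S}_{2n}$-side difference lies in $\Rep(\mathfrak{S}_{2n})$. Taking the conjunction over $i\ge 1$ yields the claimed equivalence. There is no serious obstacle here beyond unpacking definitions; the only mild subtlety is checking that both differences actually lie in the virtual unipotent (respectively virtual) representation rings, so that nonnegativity of multiplicities in irreducibles is the correct test for being an honest representation, and this is immediate from the closure of Harish-Chandra induction under unipotency together with the $\mathbb{Z}$-linearity of $\psi$.
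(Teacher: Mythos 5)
Your proof is correct and follows the same route as the paper, which simply states the proposition as an immediate consequence of the Comparison Theorem (Theorem \ref{thm-Comparison}); your expansion in the irreducible unipotent basis and the remark that honesty of a virtual unipotent representation is detected by nonnegativity of those coefficients just make explicit the routine details the paper leaves unsaid.
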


With the above proposition, the problem of proving the induced log-concavity of unipotent representations of finite general linear groups can also be transformed to that of proving the Schur positivity of certain symmetric functions.

\begin{rem}\label{rem-property}
When we consider representations of $\mathfrak{S}_{n}$, the triple $(G,H,\phi)=(\mathfrak{S}_{2n}, \mathfrak{S}_{n}\times \mathfrak{S}_{n}, \mathrm{id})$ has property ($\diamondsuit$). The underlying reason for this fact will be brought out by Proposition \ref{prop-ch}.
Furthermore, by Theorem \ref{thm-Comparison}
the triple $(G,H,\phi)=(\GL_{2n}(\mathbb{F}_q), P_{n,2n}(\mathbb{F}_q), \iota)$ also has property ($\diamondsuit$), provided that we confine ourselves to unipotent representations of $\GL_n(\mathbb{F}_q)$ instead of all of its representations, namely, we replace $\mathrm{VRep}(\GL_{n}(\mathbb{F}_q))$ with $\mathrm{VURep}(\GL_{n}(\mathbb{F}_q))$ in the condition of ($\diamondsuit$).
\end{rem}

\begin{rem} Our notion of induced log-concavity provides a way of mixing log-concavity with induction, and allows us to study certain log-concavity problems from the viewpoint of symmetric functions. Nicholas Proudfoot (private communication) pointed out that he and Weiyan Chen considered another way of mixing log-concavity with induction, with the goal of getting a statement that can be interpreted nicely in terms of symmetric functions. Let $G$ be a finite group, and let $W$ be the wreath product of $\mathfrak{S}_n$ with $G$.
Given a graded representation $V$ of $G$, there is a natural way to consider $V^{\otimes n}$ as a graded representation of $W$. They conjectured that if $V$ is $G$-equivariantly log-concave then $V^{\otimes n}$ is $W$-equivariantly log-concave. One special case of this conjecture of particular interest occurs when $G=\mathfrak{S}_k$. In this case, there is a natural embedding of $W$ into $\mathfrak{S}_{kn}$, then the representation obtained from $V^{\otimes n}$ by inducing up to $\mathfrak{S}_{kn}$ would be $\mathfrak{S}_{kn}$-equivariantly log-concave. This conjecture is appealing because it has a nice interpretation in terms of symmetric functions.
To be precise, let $f(t)$ be the polynomial in which the coefficient of $t^k$, up to sign, is equal to the Frobenius characteristic of the $k$-th graded piece of $V$. Then the corresponding polynomial for the induced representation in the previous paragraph is otained by taking the plethysm of the $n$-th complete symmetric function $h_n$ with $f(t)$.
\end{rem}

\section{Schur positivity} \label{sect-3}

In this section we will present some Schur positivity results that will be used later to prove the induced log-concavity of equivariant characteristic polynomials and Kazhdan-Lusztig polynomials of uniform matroids and $q$-niform matroids.

Let us now recall some related definitions and terminology in the theory of symmetric functions. For undefined terminology, see Stanley \cite{Stanley}.
Let $\lambda$ and $\mu$ are two partitions, if $\lambda/\mu$ be a skew shape, i.e., $0\leq \mu_i\le \lambda_i$ for all $i$, then the Young diagram of $\lambda/\mu$
is obtained from the Young diagram of $\lambda$ by removing the boxes in the subdiagram of $\mu$ from the top left corner.
A semistandard Young tableau $T$ of skew shape $\lambda/ \mu$ is defined to be a filling of the skew diagram $\lambda/ \mu$ with positive integers
that is weakly increasing in each row and strictly increasing in every column.
If $T$ is a semistandard Young tableau of shape $\lambda/\mu$, then we write $\mathrm{sh}(T)=\lambda/\mu$.
For instance, letting $\lambda=(6,5,4)$ and $\mu=(3,2)$, the skew diagram of $\lambda/\mu$ and a semistandard Young tableau of shape $\lambda/\mu$ are presented in Figure \ref{fig-2}.

\begin{center}
    \ytableausetup{mathmode, boxsize=0.5em, smalltableaux}
    \makeatletter\def\@captype{figure}\makeatother
    \begin{minipage}{.3\textwidth}
        \begin{ytableau}
        \none&\none  & \none  & & & \null  \\
        \none&\none & \null &  &  \\
        \null& \null &\null & \null
        \end{ytableau}
    \end{minipage}
    \makeatletter\def\@captype{figure}\makeatother
    \begin{minipage}{.3\textwidth}
        \begin{ytableau}
            \none&\none  & \none  & 3 & 4 & 6\null  \\
            \none&\none & \null 4 & 4 & 5 \\
            \null 1& \null 2&\null 7& 8\null
            \end{ytableau}
    \end{minipage}
    \caption{The skew diagram and a semistandard Young tableau of shape  $(6,5,4)/(3,2)$ }\label{fig-2}
    \end{center}


The skew Schur function $s_{\lambda/\mu}$ of shape $\lambda/\mu$ in the variables $\mathrm{\bold x}$
is the formal power series
$$s_{\lambda/\mu}(\mathrm{\bold x})=\sum_{T}\mathrm{\bold x}^T,$$
where $T$ ranges over all semistandard Young tableaux of skew shape $\lambda/\mu$, and $\mathrm{\bold x}^T=x_1^{c_1}x_2^{c_2}\cdots$, if
there are $c_1$ $1$'s, $c_2$ $2$'s, etc.
If $\mu=\emptyset$, then $s_{\lambda}(\bold x)$ is called the Schur function of shape $\lambda$.
If $\lambda=(n)$, then $s_{\lambda}(\bold x)$ is called the complete symmetric function. The Jacobi-Trudi identity  \cite[Theorem 7.16.1]{Stanley}
allows us to express a skew Schur function in terms of complete symmetric functions as the following determinant
\begin{align}\label{eq-jt-identity}
s_{\lambda/\mu}(\bold x)=\det(h_{\lambda_i-\mu_j-i+j}(\bold x))_{i,j=1}^{\ell(\lambda)},
\end{align}
where $\ell(\lambda)$ denotes the length of $\lambda$
and $s_{(k)}=0$ for $k<0$.

The Schur functions play a significant role in combinatorics, representation theory and geometry.
One reason Schur functions are important is that the set $\{s_{\lambda}\mid \lambda\vdash n\}$ forms a basis for the $\mathbb{Z}$-module $\Lambda_{\mathbb{Z}}^n$ of all homogeneous symmetric functions of degree $n$ over $\mathbb{Z}$.
Another reason is that there is a canonical bijection between Schur functions
and isomorphic classes of irreducible representations of the symmetric group via the aforementioned Frobenius characteristic map $\mathrm{ch}$ in Section \ref{sect-2}. Recall that
$\mathrm{ch}\,V_{\lambda}=s_{\lambda}(\mathrm{\bold x})$ for every $\lambda\vdash n$.
Let $\Lambda_\mathbb{Z}$ denote the ring of symmetric functions over $\mathbb{Z}$, namely, 
{$\Lambda_\mathbb{Z}=\oplus_{n\geq 0} \Lambda^n_\mathbb{Z}.$}
Define 
{$\VRep(\mathfrak{S}_{\infty})=\oplus_{n\geq 0} \VRep(\mathfrak{S}_n)$}, which is also a graded ring with respect to the induction product ``$\ast$'' defined by \eqref{formula-induced-produce} and extended to $\VRep(\mathfrak{S}_{\infty})$ by linearity.
The Frobenius characteristic map $\mathrm{ch}$ can be extended to a linear transformation $\mathrm{ch}: \VRep(\mathfrak{S}_{\infty}) \rightarrow \Lambda_\mathbb{Z}$. Since
a representation is determined by its character, we have the following result.

\begin{prop}[{\cite[Proposition 7.18.4]{Stanley}}]\label{prop-ch}
The map $\mathrm{ch}$ is a ring isomorphism between
$\VRep(\mathfrak{S}_{\infty})$ and $\Lambda_\mathbb{Z}$. In particular, the image of the skew Specht module $V_{\lambda/\mu}$ under the map $\mathrm{ch}$ is the skew Schur function
$s_{\lambda/\mu}(\mathrm{\bold x})$.
\end{prop}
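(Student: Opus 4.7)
The plan is to prove this classical result via the Frobenius character formula. For $V\in\Rep(\mathfrak{S}_n)$ with character $\chi_V$, I would define
\[
\mathrm{ch}(V)=\frac{1}{n!}\sum_{\sigma\in\mathfrak{S}_n}\chi_V(\sigma)\,p_{\rho(\sigma)},
\]
where $\rho(\sigma)$ denotes the cycle type of $\sigma$ and $p_\lambda$ is the power-sum symmetric function indexed by $\lambda$. Extending $\mathbb{Z}$-linearly to $\VRep(\mathfrak{S}_n)$ and then summing over all $n\geq 0$ produces a $\mathbb{Z}$-linear map $\mathrm{ch}:\VRep(\mathfrak{S}_\infty)\to\Lambda_\mathbb{Z}$. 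The identity $\mathrm{ch}(V_\lambda)=s_\lambda$ is then the content of the Frobenius character formula, which I would derive from Weyl's character formula combined with Schur-Weyl duality, or equivalently by verifying the transition matrix between the power-sum and Schur bases.

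The next step is bijectivity on each homogeneous component. Since $\{[V_\lambda]:\lambda\vdash n\}$ is a $\mathbb{Z}$-basis of $\VRep(\mathfrak{S}_n)$ (as the distinct irreducible representations) and $\{s_\lambda:\lambda\vdash n\}$ is a $\mathbb{Z}$-basis of $\Lambda^n_\mathbb{Z}$, the above identification makes $\mathrm{ch}$ a graded $\mathbb{Z}$-module isomorphism at once.

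To upgrade this to a ring isomorphism, I need to verify multiplicativity $\mathrm{ch}(V\ast V')=\mathrm{ch}(V)\cdot\mathrm{ch}(V')$ for $V\in\Rep(\mathfrak{S}_k)$, $V'\in\Rep(\mathfrak{S}_{n-k})$. Using Frobenius reciprocity together with the standard coset description of $\mathfrak{S}_n/(\mathfrak{S}_k\times\mathfrak{S}_{n-k})$, the character $\chi_{V\ast V'}(\sigma)$ on a permutation of cycle type $\rho$ decomposes as a sum over splittings $\rho=\rho^{(1)}\sqcup\rho^{(2)}$ of $\chi_V(\rho^{(1)})\chi_{V'}(\rho^{(2)})$. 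Substituting this into the definition of $\mathrm{ch}$ and invoking the multiplicativity $p_\mu p_\nu=p_{\mu\cup\nu}$ of the power-sum basis yields the desired identity. For the second assertion, I would use the fact that the skew Specht module $V_{\lambda/\mu}$ expands as $\sum_\nu c^\lambda_{\mu\nu}V_\nu$ via the Littlewood-Richardson rule applied to its defining induction, while $s_{\lambda/\mu}=\sum_\nu c^\lambda_{\mu\nu}s_\nu$ has the same coefficients; hence the ring isomorphism sends $V_{\lambda/\mu}$ to $s_{\lambda/\mu}$.

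The main obstacle is the multiplicativity step, since it requires a careful bookkeeping of how cycle types behave under the coset decomposition of $\mathfrak{S}_n$ by the parabolic $\mathfrak{S}_k\times\mathfrak{S}_{n-k}$. Everything else in the argument is either a basis count or a direct appeal to the Littlewood-Richardson rule, and once the multiplicative identity is in hand the full ring isomorphism follows immediately.
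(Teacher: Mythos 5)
The paper does not prove this statement at all: it is quoted verbatim from Stanley (Proposition 7.18.4), with the one-line remark that a representation is determined by its character. Your sketch is essentially the standard textbook argument behind that citation, and it is correct in outline: define $\mathrm{ch}$ by the character formula $\mathrm{ch}(V)=\frac{1}{n!}\sum_{\sigma}\chi_V(\sigma)p_{\rho(\sigma)}$, use $\mathrm{ch}(V_\lambda)=s_\lambda$ plus the fact that the irreducibles and the Schur functions are $\mathbb{Z}$-bases of the respective graded pieces to get a graded module isomorphism, and verify $\mathrm{ch}(V\ast V')=\mathrm{ch}(V)\,\mathrm{ch}(V')$ from the induced character formula together with $p_\mu p_\nu=p_{\mu\cup\nu}$. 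Two points deserve care. First, in the multiplicativity step your phrase ``sum over splittings of the cycle type'' hides the counting factors; the clean route is to write $\chi_{\mathrm{Ind}}(\sigma)=\frac{1}{|H|}\sum_{g:\,g^{-1}\sigma g\in H}\chi_{V\boxtimes V'}(g^{-1}\sigma g)$ with $H=\mathfrak{S}_k\times\mathfrak{S}_{n-k}$, substitute into $\mathrm{ch}$, and reindex over elements of $H$, after which the product of power sums appears with no further bookkeeping. Second, the skew Specht module is not ``defined by an induction,'' so be careful that your appeal to the Littlewood--Richardson rule is not circular: the decomposition $V_{\lambda/\mu}\cong\bigoplus_\nu c^{\lambda}_{\mu\nu}V_\nu$ should be obtained either by a direct character computation for the skew Specht module or from the adjunction $\langle V_{\lambda/\mu},V_\nu\rangle=\langle V_\lambda, V_\mu\ast V_\nu\rangle$, and only then compared with $s_{\lambda/\mu}=\sum_\nu c^{\lambda}_{\mu\nu}s_\nu$. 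With those two details pinned down, your argument reproduces the cited proof.
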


Recall that a symmetric function $f(\mathrm{\bold x})$ is said to be Schur positive, or simply $s$-positive, if it can be written as a nonnegative linear combination of Schur functions. It is known that every skew Schur function $s_{\lambda/\mu}(\mathrm{\bold x})$ is Schur positive.
In the following we will abbreviate a symmetric function $f(\mathrm{\bold x})$ to $f$ if no confusion will result.
We also use $f\ge_s g$ to represent that $f-g$ is $s$-positive.
By Proposition \ref{prop-ch}, the problem of determining whether a virtual representation $V\in \VRep(\mathfrak{S}_{\infty})$ is honest  is equivalent to proving the Schur positivity of $\ch\, V$. Recall that for any two representations $V\in\Rep(\mathfrak{S}_k),\,V'\in\Rep(\mathfrak{S}_{n-k})$ we have $\mathrm{ch}(V\ast V')=\mathrm{ch} V\cdot \mathrm{ch} V'$. Thus, we have the following result.

\begin{prop}\label{prop-il-to-sp}
A sequence
$(C_i)_{i\ge 0}$ of representation of $\mathfrak{S}_{n}$ is inductively log-concave (respectively, strongly inductively log-concave) with respect to $(\mathfrak{S}_{2n}, \mathfrak{S}_{n}\times \mathfrak{S}_{n}, \mathrm{id})$  if and only if
$(\mathrm{ch}\, C_i)^2-(\mathrm{ch}\, C_{i-1})(\mathrm{ch}\, C_{i+1})$ is Schur positive for $i\geq 1$ (respectively, $(\mathrm{ch}\, C_i)(\mathrm{ch}\, C_j)-(\mathrm{ch}\, C_{i-1})(\mathrm{ch}\, C_{j+1})$ is Schur positive for $1\leq i\leq j$).
\end{prop}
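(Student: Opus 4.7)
The proof should be a direct translation via the Frobenius characteristic map, and I do not expect any substantive obstacle once Proposition \ref{prop-ch} is in hand. The plan is to unwind the definition, apply $\mathrm{ch}$, and read off Schur positivity.

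First, I would unfold what it means for $(C_i)_{i\ge 0}$ to be inductively log-concave with respect to the triple $(\mathfrak{S}_{2n},\mathfrak{S}_n\times\mathfrak{S}_n,\mathrm{id})$. By the definition introduced in Section \ref{sect-2}, this is the statement that
\begin{align*}
    \Ind_{\mathfrak{S}_n\times\mathfrak{S}_n}^{\mathfrak{S}_{2n}}(C_i\boxtimes C_i)-\Ind_{\mathfrak{S}_n\times\mathfrak{S}_n}^{\mathfrak{S}_{2n}}(C_{i-1}\boxtimes C_{i+1})\in\Rep(\mathfrak{S}_{2n})
\end{align*}
for every $i\ge 1$, which by the definition of the induction product in \eqref{formula-induced-produce} is exactly
\begin{align*}
    C_i\ast C_i-C_{i-1}\ast C_{i+1}\in\Rep(\mathfrak{S}_{2n}).
\end{align*}
The strong version is identical with the indices $(i,i)$ and $(i-1,i+1)$ replaced by $(i,j)$ and $(i-1,j+1)$ for $1\le i\le j$.

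Next, I would invoke Proposition \ref{prop-ch} to apply the Frobenius characteristic map. Since $\mathrm{ch}$ is a ring isomorphism from $\VRep(\mathfrak{S}_\infty)$ onto $\Lambda_\mathbb{Z}$ carrying the induction product $\ast$ to ordinary multiplication of symmetric functions, we have
\begin{align*}
    \mathrm{ch}(C_i\ast C_i-C_{i-1}\ast C_{i+1})=(\mathrm{ch}\,C_i)^2-(\mathrm{ch}\,C_{i-1})(\mathrm{ch}\,C_{i+1}),
\end{align*}
and similarly $\mathrm{ch}(C_i\ast C_j-C_{i-1}\ast C_{j+1})=(\mathrm{ch}\,C_i)(\mathrm{ch}\,C_j)-(\mathrm{ch}\,C_{i-1})(\mathrm{ch}\,C_{j+1})$ in the strong case. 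Both images lie in the degree-$2n$ piece $\Lambda^{2n}_\mathbb{Z}$, which is the correct component because each factor $\mathrm{ch}\,C_k\in\Lambda^n_\mathbb{Z}$.

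Finally, I would convert membership in $\Rep(\mathfrak{S}_{2n})$ to Schur positivity: under $\mathrm{ch}$, the $\mathbb{Z}$-basis $\{V_\lambda:\lambda\vdash 2n\}$ of $\VRep(\mathfrak{S}_{2n})$ is carried bijectively onto the $\mathbb{Z}$-basis $\{s_\lambda:\lambda\vdash 2n\}$ of $\Lambda^{2n}_\mathbb{Z}$; since an honest representation is precisely a nonnegative integer combination of the $V_\lambda$'s, a virtual representation $V\in\VRep(\mathfrak{S}_{2n})$ lies in $\Rep(\mathfrak{S}_{2n})$ if and only if $\mathrm{ch}\,V$ is a nonnegative integer combination of Schur functions, i.e.\ Schur positive. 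Combining the three steps yields both equivalences immediately. The entire argument is a bookkeeping translation dictated by Proposition \ref{prop-ch}, so no genuine technical difficulty arises.
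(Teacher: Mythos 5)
Your proposal is correct and matches the paper's own (essentially one-line) justification: unwind the definition so that $\Ind_{\mathfrak{S}_n\times\mathfrak{S}_n}^{\mathfrak{S}_{2n}}(C_i\boxtimes C_j)=C_i\ast C_j$, apply the Frobenius characteristic map, which by Proposition \ref{prop-ch} is a ring isomorphism carrying $\ast$ to multiplication and honest representations to Schur positive symmetric functions. No further comment is needed.
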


Various problems on Schur positivity have been extensively studied;
see for instance
 Fomin, Fulton, Li and Poon \cite{FFLP-2005}, Lam, Postnikov and Pylyavskyy \cite{LPP-2007}, Lascoux, Leclerc and Thibon \cite{LLT-1997},
and Okounkov \cite{O-1997}.

Before presenting our results on Schur positivity, let us first recall some known results due to Lam, Postnikov and Pylyavskyy \cite{LPP-2007}. For a real number $u$, let $\lfloor u\rfloor$ be the maximal integer less than or equal to $u$ and $\lceil u\rceil$ be the minimal integer greater than or equal to $u$.
Given a positive integer $n$ and two vectors $v,w$, assume that the operations $v+w$, $\frac{v}{n}$, $\lfloor v\rfloor$ and $\lceil v\rceil$ are performed coordinate-wise.
Lam, Postnikov and Pylyavskyy \cite{LPP-2007} obtained the following result, which answered a conjecture of Okounkov \cite{O-1997}.

\begin{thm}[{\cite[Theorem 12]{LPP-2007}}]\label{thm-LPP-main}
For any two skew shapes $\lambda/\mu$ and $\nu/\rho$,
    \begin{align}
        s_{\lceil\frac{\lambda+\nu}{2}\rceil/\lceil\frac{\mu+\rho}{2}\rceil}
        s_{\lfloor\frac{\lambda+\nu}{2}\rfloor/\lfloor\frac{\mu+\rho}{2}\rfloor}
        \ge_{s}s_{\lambda/\mu}s_{\nu/\rho}.	
    \end{align}
\end{thm}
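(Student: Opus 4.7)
The plan is to prove this via a ``cell-transfer'' argument on pairs of semistandard Young tableaux, which rebalances the two skew shapes toward their coordinate-wise average, and then to upgrade the resulting monomial-preserving map to one that preserves Littlewood-Richardson coefficients, thereby yielding Schur positivity rather than merely coefficient-wise positivity.

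First, I would set up combinatorial models for both sides using the standard description of skew Schur functions as weight generating functions for skew SSYT. The product $s_{\lambda/\mu}\,s_{\nu/\rho}$ enumerates pairs $(T,U)$ of skew SSYT of the prescribed shapes, and its Schur expansion can be read off from the reverse reading word of the concatenation via the Littlewood-Richardson rule. Similarly, the averaged product enumerates pairs on the shapes $\lceil(\lambda+\nu)/2\rceil/\lceil(\mu+\rho)/2\rceil$ and $\lfloor(\lambda+\nu)/2\rfloor/\lfloor(\mu+\rho)/2\rfloor$. The goal is to construct an injection $\Phi$ from the first family into the second that preserves both the content weight and the Knuth class of the concatenated reading word.

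Next, I would construct $\Phi$ locally, row by row. Overlay the four partitions $\lambda,\mu,\nu,\rho$ on a common grid. In each row $i$, the outer lengths $\lambda_i,\nu_i$ get replaced by $\lceil(\lambda_i+\nu_i)/2\rceil$ and $\lfloor(\lambda_i+\nu_i)/2\rfloor$, and the inner lengths $\mu_i,\rho_i$ get treated analogously. Cells in the symmetric difference of the two rows are transferred from the longer row to the shorter one, with their entries migrating along. The floor and ceiling distinguish the two halves when $\lambda_i+\nu_i$ is odd: the extra cell is routed into the shape indexed by the ceiling, and this choice is synchronized across rows so that the resulting outer and inner sequences remain weakly decreasing.

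The main obstacle is twofold. First, one must verify that the transferred shapes are honest partitions satisfying $\lceil(\mu+\rho)/2\rceil\le\lceil(\lambda+\nu)/2\rceil$ and the analogous inequality for the floors; this amounts to a parity case analysis using only that $\mu\le\lambda$ and $\rho\le\nu$ coordinate-wise. The more substantial difficulty is showing that $\Phi$ respects Littlewood-Richardson coefficients rather than only monomials. The cleanest route, following the philosophy of Lam-Postnikov-Pylyavskyy, is to show that $\Phi$ commutes with jeu de taquin slides, so that Knuth-equivalence classes of concatenated reading words are sent to Knuth-equivalence classes; combined with weight preservation and the admissibility of the transferred tableaux, this yields the Schur positivity. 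Establishing this commutation relation through a careful local analysis of the swap at each transferred cell is the technical heart of the argument.
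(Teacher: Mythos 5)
This statement is not proved in the paper at all: it is quoted directly from Lam, Postnikov and Pylyavskyy \cite{LPP-2007}, so there is no internal argument to compare with. Measured against the proof in that reference, your sketch takes a genuinely different and, at its decisive step, unsupported route. Lam, Postnikov and Pylyavskyy do not argue combinatorially on tableaux: they expand the skew Schur functions via the Jacobi--Trudi identity (as in \eqref{eq-jt-identity}) and show that the difference of products is a nonnegative combination of Temperley--Lieb immanants evaluated on a Jacobi--Trudi matrix, which are Schur nonnegative by the results of Rhoades and Skandera (building on Haiman). To date no bijective or injective tableau proof of this Schur positivity is known.

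The gap in your plan is precisely the step you defer to the end. A content-preserving injection on pairs of skew SSYT (the ``cell transfer'' construction of Lam and Pylyavskyy) proves only monomial positivity of the difference, and monomial positivity is strictly weaker than Schur positivity: a difference of Schur positive symmetric functions can be monomial positive while failing to be Schur positive, so the injection alone does not yield the theorem. To upgrade it you would need $\Phi$ to preserve the Knuth class (equivalently, the rectification) of the concatenated reading word, and your proposal simply asserts that $\Phi$ can be arranged to commute with jeu de taquin slides without giving any local argument; in general, moving a cell with its entry from one of the two tableaux to the other changes the reading word in a way that changes its Knuth class, and no choice of synchronization across rows is known to repair this. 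As written, the proposal establishes at best the monomial-positivity analogue of Theorem \ref{thm-LPP-main}, not the Schur positivity that the applications in Section \ref{sect-4} require.
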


Lam, Postnikov and Pylyavskyy \cite{LPP-2007} further showed that taking conjugating partitions in the above theorem leads to a proof of a conjecture due to Fomin, Fulton, Li and Poon \cite{FFLP-2005}. Given two partitions $\lambda$ and $\mu$, let  $\lambda\cup\mu=(\alpha_1,\alpha_2,\alpha_3,\ldots)$ denote the partition obtained by rearranging all parts of $\lambda$ and $\mu$ in weakly decreasing order.
Let $\mathrm{sort}_1(\lambda,\mu):=(\alpha_1,\alpha_3,\alpha_5,\ldots)$ and
$\mathrm{sort}_2(\lambda,\mu):=(\alpha_2,\alpha_4,\alpha_6,\ldots)$.
Fomin, Fulton, Li and Poon conjectured, and Lam, Postnikov and Pylyavskyy \cite{LPP-2007} later proved the following result.

\begin{thm}[{\cite[Corollary 14]{LPP-2007}}]\label{thm-LPP-sort}
Let $\lambda/\mu $ and $\nu/ \rho$ be two skew shapes. Then $$s_{\mathrm{sort_1}(\lambda,\nu)/\mathrm{sort_1}(\mu,\rho)}s_{\mathrm{sort_2}(\lambda,\nu)/\mathrm{sort_2}(\mu,\rho)} \geq_s s_{\lambda/\mu }s_{\nu/ \rho}.$$
\end{thm}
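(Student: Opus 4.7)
The plan is to deduce Theorem~\ref{thm-LPP-sort} directly from Theorem~\ref{thm-LPP-main} by applying that theorem to the conjugate skew shapes and then transferring the resulting inequality through the standard involution $\omega:\Lambda_{\mathbb{Z}}\to\Lambda_{\mathbb{Z}}$. Recall that $\omega$ is a ring automorphism with $\omega(s_{\lambda/\mu})=s_{\lambda'/\mu'}$, where $\lambda'$ denotes the conjugate of $\lambda$; in particular, $\omega$ permutes the Schur basis and therefore preserves the relation $\geq_s$.

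First, I would apply Theorem~\ref{thm-LPP-main} to the two skew shapes $\lambda'/\mu'$ and $\nu'/\rho'$, which immediately yields
\begin{align*}
    s_{\lceil\frac{\lambda'+\nu'}{2}\rceil/\lceil\frac{\mu'+\rho'}{2}\rceil}\;s_{\lfloor\frac{\lambda'+\nu'}{2}\rfloor/\lfloor\frac{\mu'+\rho'}{2}\rfloor}\;\geq_{s}\; s_{\lambda'/\mu'}\,s_{\nu'/\rho'}.
\end{align*}
Applying $\omega$ to both sides and using that $\omega$ is a ring homomorphism preserving the Schur-positive cone, I would obtain
\begin{align*}
    s_{(\lceil\frac{\lambda'+\nu'}{2}\rceil)'/(\lceil\frac{\mu'+\rho'}{2}\rceil)'}\;s_{(\lfloor\frac{\lambda'+\nu'}{2}\rfloor)'/(\lfloor\frac{\mu'+\rho'}{2}\rfloor)'}\;\geq_{s}\; s_{\lambda/\mu}\,s_{\nu/\rho}.
\end{align*}

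The remaining step is a purely combinatorial identity: for any two partitions $\alpha$ and $\beta$,
\begin{align*}
    \bigl(\lceil(\alpha'+\beta')/2\rceil\bigr)' = \mathrm{sort}_{1}(\alpha,\beta),\qquad \bigl(\lfloor(\alpha'+\beta')/2\rfloor\bigr)' = \mathrm{sort}_{2}(\alpha,\beta).
\end{align*}
The engine is the elementary identity $(\alpha\cup\beta)'=\alpha'+\beta'$, which holds because the number of parts of $\alpha\cup\beta$ that are $\geq i$ is the sum of the corresponding counts for $\alpha$ and $\beta$. Writing $\alpha\cup\beta=(\gamma_{1},\gamma_{2},\ldots)$ and counting the odd-indexed parts $\gamma_{1},\gamma_{3},\ldots$ that are $\geq i$ gives exactly $\lceil(\alpha\cup\beta)'_{i}/2\rceil$, so $\mathrm{sort}_{1}(\alpha,\beta)'_{i}=\lceil(\alpha'+\beta')_{i}/2\rceil$; the analogous computation for the even-indexed parts produces $\lfloor\cdot\rfloor$ in place of $\lceil\cdot\rceil$. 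Applying these identities once with $(\alpha,\beta)=(\lambda,\nu)$ to the outer shapes and once with $(\alpha,\beta)=(\mu,\rho)$ to the inner shapes converts the previous display into the claimed inequality.

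There is no substantive obstacle: once the role of $\omega$ and the above combinatorial identity are recorded, the argument is a one-line deduction from Theorem~\ref{thm-LPP-main}. The only point requiring mild verification is that the skew-shape containment is preserved by all of the operations involved (so the expressions that appear are genuine skew Schur functions rather than formal symbols), but this follows from the monotonicity of union, coordinate-wise sum, ceiling/floor, conjugation, and the selection of odd or even positions.
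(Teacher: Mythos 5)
Your argument is correct: the identity $(\alpha\cup\beta)'=\alpha'+\beta'$ together with the odd/even position count does give $\mathrm{sort}_1(\alpha,\beta)=(\lceil(\alpha'+\beta')/2\rceil)'$ and $\mathrm{sort}_2(\alpha,\beta)=(\lfloor(\alpha'+\beta')/2\rfloor)'$, and applying $\omega$ (which permutes the Schur basis and hence preserves $\geq_s$) to Theorem \ref{thm-LPP-main} evaluated at $\lambda'/\mu'$ and $\nu'/\rho'$ yields exactly the claimed inequality. This is the same route indicated in the paper and in \cite{LPP-2007}, namely deducing the statement from Theorem \ref{thm-LPP-main} by conjugating partitions, so nothing further is needed.
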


Both Theorem \ref{thm-LPP-main} and Theorem \ref{thm-LPP-sort} will be used later but in an indirect way. Note that the Schur positivity in Theorem \ref{thm-LPP-main} and Theorem \ref{thm-LPP-sort} only depends on the values of the involved skew Schur functions, so the shapes of skew partitions are critical for the involved operations.
In order to apply Theorem \ref{thm-LPP-main} and Theorem \ref{thm-LPP-sort} to a larger context, it is natural to consider when two skew Schur functions are equal.
We would like to point out that the question of equalities among skew Schur functions has been studied by Billera, Thomas, and van Willigenburg \cite{BTW-2006}, and Reiner, Shaw and van Willigenburg \cite{RSW-2007}. For our purpose here, we need the following basic fact.

\begin{prop}[{\cite[Exercise 7.56 (a)]{Stanley}}]\label{thm-rotate-180}
Given a skew shape $\lambda/\mu$, let $(\lambda/\mu)^{r}$ denote the skew shape obtained by rotating $\lambda/\mu$
180 degrees.  Then $s_{\lambda/\mu}=s_{{(\lambda/\mu)}^{r}}.$
\end{prop}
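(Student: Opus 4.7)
The plan is to establish $s_{\lambda/\mu}=s_{(\lambda/\mu)^r}$ via an explicit bijection on semistandard Young tableaux that reverses the weight, and then invoke the symmetry of Schur functions in their variables. Fix any positive integer $N$ large enough that all SSYT of interest use alphabet $\{1,2,\ldots,N\}$. Place $\lambda/\mu$ inside its bounding rectangle $m\times n$ with $m=\ell(\lambda)$ and $n=\lambda_1$, so that rotating by $180^\circ$ sends the cell $(i,j)$ to the cell $(m+1-i,\,n+1-j)$ and carries $\lambda/\mu$ to a bona fide skew shape $(\lambda/\mu)^r=\tilde\lambda/\tilde\mu$ given by $\tilde\lambda_i=n-\mu_{m+1-i}$ and $\tilde\mu_i=n-\lambda_{m+1-i}$.

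Next, given $T\in\mathrm{SSYT}(\lambda/\mu)$ with entries in $\{1,\ldots,N\}$, define $T^*$ to be the filling of $(\lambda/\mu)^r$ obtained by rotating $T$ by $180^\circ$ and then replacing each entry $k$ by $N+1-k$. I would check that $T^*$ is a semistandard Young tableau of shape $(\lambda/\mu)^r$: rotation turns weakly-increasing rows into weakly-decreasing rows and strictly-increasing columns into strictly-decreasing columns, and the order-reversing complement $k\mapsto N+1-k$ then restores the weak-increase in rows and strict-increase in columns. The construction is clearly an involution, so $T\mapsto T^*$ is a bijection between $\mathrm{SSYT}(\lambda/\mu)$ and $\mathrm{SSYT}((\lambda/\mu)^r)$, and if $T$ has weight $(c_1,\ldots,c_N)$ then $T^*$ has weight $(c_N,c_{N-1},\ldots,c_1)$.

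Summing the monomial $\mathbf{x}^T$ over all tableaux and comparing the two sides, one obtains
\begin{equation*}
s_{\lambda/\mu}(x_1,\ldots,x_N)\;=\;s_{(\lambda/\mu)^r}(x_N,x_{N-1},\ldots,x_1).
\end{equation*}
Since the skew Schur function $s_{(\lambda/\mu)^r}$ is a symmetric function in its variables, the right-hand side equals $s_{(\lambda/\mu)^r}(x_1,\ldots,x_N)$, whence $s_{\lambda/\mu}=s_{(\lambda/\mu)^r}$ as polynomials in $x_1,\ldots,x_N$. Since $N$ was arbitrary, the identity holds in the ring $\Lambda_\mathbb{Z}$.

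The only point requiring care, and what I would treat as the main obstacle, is the book-keeping in the first step: verifying that the rotated diagram is indeed the skew shape $\tilde\lambda/\tilde\mu$ described above (so that the notion of SSYT on $(\lambda/\mu)^r$ is unambiguous), and that rotation plus complementation on entries really produces a semistandard filling. Everything else—the bijectivity of $T\mapsto T^*$, the weight reversal, and the final invocation of symmetry—is routine.
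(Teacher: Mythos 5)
Your argument is correct: the rotate-by-$180^\circ$-and-complement map $T\mapsto T^*$ (with the weight reversal and the symmetry of skew Schur functions in finitely many variables, letting $N\to\infty$) is exactly the standard proof of this fact, and your book-keeping for the rotated shape $\tilde\lambda_i=n-\mu_{m+1-i}$, $\tilde\mu_i=n-\lambda_{m+1-i}$ checks out against the paper's example $(6,5,4)/(3,2)\mapsto(6,4,3)/(2,1)$. The paper itself gives no proof, citing Stanley's Exercise 7.56(a), whose intended solution is this same bijection; an even shorter alternative available in the paper is the Jacobi--Trudi identity \eqref{eq-jt-identity}, since the matrix for $(\lambda/\mu)^r$ is obtained from that of $\lambda/\mu$ by reversing the order of rows and columns, which preserves the determinant.
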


For example, by rotating the skew diagram $\lambda/\mu=(6,5,4)/(3,2)$ 180 degrees, we obtain $(\lambda/\mu)^{r}=(6,4,3)/(2,1)$, as illustrated in Figure \ref{fig-3}.

\begin{center}
\ytableausetup{mathmode, boxsize=0.5em, smalltableaux}
\makeatletter\def\@captype{figure}\makeatother
\begin{minipage}{.3\textwidth}
\begin{ytableau}
\none&\none  & \none  & & & \null  \\
\none&\none & \null &  &  \\
\null& \null &\null & \null
\end{ytableau}
\end{minipage}
\makeatletter\def\@captype{figure}\makeatother
\begin{minipage}{.3\textwidth}
\begin{ytableau}
\none&\none  &  & & & \null  \\
\none& & \null &    \\
\null& \null &\null
\end{ytableau}
\end{minipage}
\caption{The skew diagrams $(6,5,4)/(3,2)$ and $(6,4,3)/(2,1)$}\label{fig-3}
\end{center}

Our first result on Schur positivity is concerned with
a sequence of partitions, a special case of which arises in an expression of the equivariant Kazhdan-Lusztig polynomials of uniform matroids due to Gao, Xie and Yang \cite{GXY-2021}.

\begin{thm}\label{thm-Schur-main}
    Given some fixed integers $n,k,a,b,c,d$, for any $i$, let
    $$\lambda^{(i)}/\mu^{(i)}:=(n+ki,(a+ki)^{ci+d})/((b+ki)^{ci+d}).$$
    If $i$ is an integer such that
    $\lambda^{(i-1)}/\mu^{(i-1)},\lambda^{(i)}/\mu^{(i)}$ and
    $\lambda^{(i+1)}/\mu^{(i+1)}$ are skew shapes, then
    \begin{align*}
        s^2_{\lambda^{(i)}/\mu^{(i)}}-s_{\lambda^{(i-1)}/\mu^{(i-1)}}s_{\lambda^{(i+1)}/\mu^{(i+1)}}\ge_s 0.
    \end{align*}
\end{thm}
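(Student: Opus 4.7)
The plan is to reduce the claim to consecutive applications of Theorem \ref{thm-LPP-sort} and Theorem \ref{thm-LPP-main} after a preliminary rotation that concentrates all the $i$-dependence into a single coordinate and the repetition multiplicities.

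First, I will apply Proposition \ref{thm-rotate-180}. A direct computation from the $180$-degree rotation formula shows that $(\lambda^{(i)}/\mu^{(i)})^{r} = \nu^{(i)}/\rho^{(i)}$ with
\begin{align*}
\nu^{(i)} = (n+ki,\, B^{ci+d}),\qquad \rho^{(i)} = (A^{ci+d}),
\end{align*}
where $A = n-a$ and $B = n-b$. Since $s_{\lambda^{(j)}/\mu^{(j)}} = s_{\nu^{(j)}/\rho^{(j)}}$ for each $j \in \{i-1, i, i+1\}$, it suffices to prove
\begin{align*}
s_{\nu^{(i)}/\rho^{(i)}}^{2} \geq_{s} s_{\nu^{(i-1)}/\rho^{(i-1)}}\, s_{\nu^{(i+1)}/\rho^{(i+1)}}.
\end{align*}
The advantage is that in the rotated shapes only the first part $n+ki$ and the multiplicities $ci+d$ of the constants $B$ and $A$ depend on $i$.

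Next, I will chain two intermediate inequalities. Applying Theorem \ref{thm-LPP-sort} to the pair $(\nu^{(i-1)}/\rho^{(i-1)}, \nu^{(i+1)}/\rho^{(i+1)})$, the sort operation on the multiset union of the parts places $n+k(i-1)$ and $n+k(i+1)$ in the first two positions (using that the skew-shape hypothesis gives $n+k(i\pm 1) \geq B$), while the $2(ci+d)$ copies of $B$ split evenly between $\mathrm{sort}_{1}$ and $\mathrm{sort}_{2}$; since the $\rho$'s consist only of $A$'s, both sorts on the $\rho$-side equal $A^{ci+d}$. Writing $\alpha_{\pm} = (n+k(i\pm 1),\, B^{ci+d})/(A^{ci+d})$, this yields
\begin{align*}
s_{\alpha_{+}}\, s_{\alpha_{-}} \geq_{s} s_{\nu^{(i-1)}/\rho^{(i-1)}}\, s_{\nu^{(i+1)}/\rho^{(i+1)}}.
\end{align*}
Then I will apply Theorem \ref{thm-LPP-main} to $(\alpha_{+}, \alpha_{-})$: the coordinatewise averages of the numerators and denominators are $(n+ki,\, B^{ci+d})$ and $A^{ci+d}$, both integral, so the ceiling and floor coincide with $\nu^{(i)}/\rho^{(i)}$, giving $s_{\nu^{(i)}/\rho^{(i)}}^{2} \geq_{s} s_{\alpha_{+}}\, s_{\alpha_{-}}$. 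Composing the two inequalities completes the proof.

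The step I expect to require the most care is the sort computation: a small case analysis is needed when degenerate equalities such as $n+k(i-1) = B$ force repeated parts in the sorted multiset, but in each such case one verifies that the two sorts of $(\nu^{(i-1)}, \nu^{(i+1)})$ still agree with the claimed partitions $\alpha_{\pm}$. The rotation formula also requires attention to the trailing zero in $\mu^{(i)}$, which has one fewer part than $\lambda^{(i)}$, but this is routine bookkeeping.
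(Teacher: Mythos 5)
Your proposal is correct and follows essentially the same route as the paper's own proof: rotate each $\lambda^{(i)}/\mu^{(i)}$ by $180$ degrees via Proposition \ref{thm-rotate-180} so that only the first part and the multiplicities depend on $i$, then apply Theorem \ref{thm-LPP-sort} to the rotated shapes at $i-1$ and $i+1$ to obtain the intermediate pair $\alpha_{\pm}=(n+k(i\pm1),(n-b)^{ci+d})/((n-a)^{ci+d})$, and finally apply Theorem \ref{thm-LPP-main} to $(\alpha_+,\alpha_-)$, whose integral coordinatewise averages recover the rotated middle shape. Your verification of the sort computation (including the observation that the skew-shape hypotheses force $n+k(i\pm1)\ge n-b$) matches the paper's "one can verify" step, so no gap remains.
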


\begin{proof}
    By rotating the skew shape $\lambda^{(i)}/\mu^{(i)}$ $180$ degrees, we find that
    \begin{align*}
        (\lambda^{(i)}/\mu^{(i)})^{r}=(n+ki,(n-b)^{ci+d})/((n-a)^{ci+d}).
    \end{align*}
    In view of Theorem \ref{thm-rotate-180}, it suffices to show
    \begin{align}\label{eq-spp}
        s^2_{(\lambda^{(i)}/\mu^{(i)})^r}-s_{(\lambda^{(i-1)}/\mu^{(i-1)})^r}
        s_{(\lambda^{(i+1)}/\mu^{(i+1)})^r}\ge_s 0.
    \end{align}
%
On the one hand, taking
    \begin{align*}
    \lambda &=\left(n+k(i+1),(n-b)^{c(i+1)+d}\right),\\ \mu & =((n-a)^{c(i+1)+d}),\\
    \nu &=(n+k(i-1),(n-b)^{c(i-1)+d}),\\ \rho &=((n-a)^{c(i-1)+d})
    \end{align*}
    in Theorem \ref{thm-LPP-sort}, one can verify that
    \begin{align*}
        \mathrm{sort_1}(\lambda,\nu)&=  (n+k(i+1),{(n-b)}^{ci+d}),\\
        \mathrm{sort_2}(\lambda,\nu)&=  (n+k(i-1),{(n-b)}^{ci+d}),\\
        \mathrm{sort_1}(\mu,\rho)&=\mathrm{sort_2}(\mu,\rho) =((n-a)^{ci+d}),
    \end{align*}
    and hence obtain the inequality
    \begin{align}\label{eq-spp_1}
        s_{(n+k(i+1),{(n-b)}^{ci+d})/((n-a)^{ci+d})} s_{(n+k(i-1),{(n-b)}^{ci+d})/((n-a)^{ci+d})} \ge_s s_{(\lambda^{(i-1)}/\mu^{(i-1)})^r}
        s_{(\lambda^{(i+1)}/\mu^{(i+1)})^r}.
    \end{align}
On the other hand, taking
\begin{align*}
\lambda & =(n+k(i+1),{(n-b)}^{ci+d}),\\
\mu &=((n-a)^{ci+d}),\\
\nu & =(n+k(i-1),{(n-b)}^{ci+d}),\\
\rho &=((n-a)^{ci+d})
\end{align*}
in Theorem \ref{thm-LPP-main}, one can verify that
    \begin{align*}
    \Bigl\lceil\frac{\lambda+\nu}{2}\Bigr\rceil&=\Bigl\lfloor\frac{\lambda+\nu}{2}\Bigr\rfloor
    =(n+ki,{(n-b)}^{ci+d}),\\
    \Bigl\lceil\frac{\mu+\rho}{2}\Bigr\rceil&=\Bigl\lfloor\frac{\mu+\rho}{2}\Bigr\rfloor=((n-a)^{ci+d}),
    \end{align*}
and hence obtain the inequality
    \begin{align}\label{eq-spp_2}
s^2_{(\lambda^{(i)}/\mu^{(i)})^r}
\ge_{s} s_{(n+k(i+1),{(n-b)}^{ci+d}) / ((n-a)^{ci+d})}
    s_{(n+k(i-1),{(n-b)}^{ci+d}) / ((n-a)^{ci+d})}.
\end{align}	
Combining \eqref{eq-spp_1} and \eqref{eq-spp_2} we get  \eqref{eq-spp}. This completes the proof.
\end{proof}

We proceed to state our second result on Schur positivity, which is concerned with the difference of products of Schur functions of hook shapes. To prove this result, we will use the celebrated Littlewood-Richardson rule, which we will recall below.
Given a semistandard Young tableau $T$, we say that it has type $\alpha=(\alpha_1,\alpha_2,\ldots)$, denoted $\alpha=\mathrm{type}(T)$, if $T$ has $\alpha_i$ entries equal to $i$.
The reverse reading word of $T$ is the sequence of entries of $T$ obtained by concatenating the rows of $T$ from right to left, top to bottom.
We say that a word $w_1w_2\cdots w_n$ is a lattice permutation if in any initial factor $w_1w_2\cdots w_j$,
the number of $i$'s is at least as great as the number of $(i+1)$'s for all $i$.
A Littlewood-Richardson tableau is a semistandard Young tableau $T$ such that its reverse reading word is a lattice permutation. The Littlewood-Richardson rule can be stated as follows.

\begin{thm}[{\cite[Section 7.10]{Stanley}}]\label{thm-LR}
    If
    \begin{align*}
        s_{\lambda}s_{\mu}=\sum_{\gamma}c_{\lambda\mu}^\gamma s_{\gamma},
    \end{align*}
    then the Littlewood-Richardson coefficient $c_{\lambda\mu}^\gamma$ is equal to the number of Littlewood-Richardson tableaux of shape
    $\gamma/ \mu$ and type $\lambda$.
\end{thm}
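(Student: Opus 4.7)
The plan is to prove the Littlewood-Richardson rule along the classical route via Knuth equivalence and jeu de taquin, identifying the equivalence classes of skew SSYT with Littlewood-Richardson tableaux. First I would set up the combinatorial tools: Knuth equivalence on words (generated by $acb \sim cab$ for $a\le b<c$ and $bac\sim bca$ for $a<b\le c$) and Sch\"utzenberger's jeu de taquin slides on skew semistandard Young tableaux. The key fact is the confluence of jeu de taquin: the rectification $R(T)$ of a skew SSYT $T$ to a straight-shape tableau is independent of the slide sequence chosen, and corresponds at the level of reading words to Knuth equivalence. In particular, two SSYT have the same rectification if and only if their reading words are Knuth-equivalent.

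Next I would rewrite the skew Schur function expansion. From the tableau definition $s_{\gamma/\mu}(\mathbf{x})=\sum_T \mathbf{x}^T$ (over SSYT of shape $\gamma/\mu$), partition the tableaux by rectification. Using the classical lemma that the number of SSYT of shape $\gamma/\mu$ rectifying to a given straight-shape tableau $T_0$ depends only on the shape of $T_0$ (and using that jeu de taquin preserves content), one obtains
\begin{equation*}
    s_{\gamma/\mu}=\sum_\lambda d_{\gamma\mu}^\lambda\, s_\lambda,
\end{equation*}
where $d_{\gamma\mu}^\lambda$ counts SSYT of shape $\gamma/\mu$ that rectify to any fixed chosen representative of shape $\lambda$. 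On the other hand, adjointness of multiplication and skewing with respect to the Hall inner product gives $c_{\lambda\mu}^\gamma = \langle s_\lambda s_\mu, s_\gamma \rangle = \langle s_\lambda, s_{\gamma/\mu} \rangle = d_{\gamma\mu}^\lambda$, so it remains to compute $d_{\gamma\mu}^\lambda$ combinatorially.

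Finally, I would take as canonical representative the superstandard tableau $Y^\lambda$ of shape $\lambda$, obtained by filling row $i$ entirely with $i$'s, and show that a SSYT $T$ of shape $\gamma/\mu$ rectifies to $Y^\lambda$ if and only if $T$ is a Littlewood-Richardson tableau of type $\lambda$. One direction is transparent: the reverse reading word of $Y^\lambda$ is the Yamanouchi word $1^{\lambda_1}2^{\lambda_2}\cdots$, which is a lattice permutation. The converse reduces to showing that the lattice-permutation property of the reverse reading word is invariant under Knuth moves, so any $T$ whose reverse reading word is a lattice permutation of type $\lambda$ is Knuth-equivalent to $Y^\lambda$ and therefore rectifies to it. The hard part will be precisely this Knuth-invariance: verifying case-by-case that each elementary Knuth move on a lattice permutation produces another lattice permutation, and handling the bookkeeping of type and shape correctly under the bijection. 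The confluence of jeu de taquin and the shape-independence of $|R^{-1}(T_0)|$ are other well-known but nontrivial ingredients that I would treat as black boxes, citing the standard development in Stanley's EC2.
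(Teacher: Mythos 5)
The paper does not prove this theorem at all: it is quoted as a classical result with a citation to Stanley's EC2, so there is no internal proof to compare against. Your sketch is the standard jeu-de-taquin/Knuth-equivalence proof of the Littlewood--Richardson rule, and as an outline it is correct: confluence of jeu de taquin, the shape-independence of the fiber sizes of rectification, the adjointness $\langle s_\lambda s_\mu, s_\gamma\rangle=\langle s_\lambda, s_{\gamma/\mu}\rangle$, and the identification of the rectification class of the superstandard tableau $Y^\lambda$ with Littlewood--Richardson tableaux do assemble into the theorem exactly as you describe. Three points deserve to be flagged as more than bookkeeping. First, the adjointness step is not free if, as in this paper, $s_{\gamma/\mu}$ is \emph{defined} combinatorially; the usual route is to prove $\langle s_{\gamma/\mu},h_\alpha\rangle = K_{\gamma/\mu,\alpha}$ and compare with $\langle s_\gamma, s_\mu h_\alpha\rangle$ via iterated Pieri, so you should either cite or include that argument rather than invoke adjointness as a named fact. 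Second, your converse direction implicitly uses the lemma that the unique straight-shape semistandard tableau of content $\lambda$ whose reverse reading word is a lattice permutation is $Y^\lambda$; this is easy (latticeness plus column-strictness forces row $i$ to consist of $i$'s) but it must be stated, since ``lattice $\Rightarrow$ Knuth-equivalent to $Y^\lambda$'' is exactly the combination of Knuth-invariance of latticeness with this uniqueness. Third, the paper's convention (reverse reading word: rows right to left, top to bottom) differs from the row-word convention in which the jdt/Knuth statements are usually proved, so the case analysis of Knuth moves has to be carried out for the correct word, or the two conventions reconciled explicitly. With those items filled in, your proposal is a complete and standard proof, consistent with the source the paper cites.
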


In many cases we encounter a special case of Theorem \ref{thm-LR}, called Pieri's rule, which can be stated in terms of horizontal strips as below. Recall that a horizontal strip is a skew diagram with no two squares in the same column.

\begin{thm}\cite[Theorem 7.15.7]{Stanley}
    We have
    \begin{align*}
        s_{\lambda}s_{(n)}=\sum_{\mu}s_{\mu},
    \end{align*}
    summed over all partitions $\mu$ such that $\mu/\lambda$ is a horizontal strip of size $n$.
\end{thm}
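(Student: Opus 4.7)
The plan is to deduce Pieri's rule as a direct corollary of the Littlewood--Richardson rule (Theorem \ref{thm-LR}), exploiting commutativity of the product of Schur functions so that the LR rule produces tableaux on the ``right'' skew shape. Since $s_{\lambda}s_{(n)}=s_{(n)}s_{\lambda}$, I would apply Theorem \ref{thm-LR} in the reversed order to write
\begin{align*}
    s_{\lambda}s_{(n)} = \sum_{\gamma} c_{(n),\lambda}^{\gamma}\, s_{\gamma},
\end{align*}
where $c_{(n),\lambda}^{\gamma}$ counts Littlewood--Richardson tableaux of shape $\gamma/\lambda$ and type $(n)$. The entire argument then reduces to describing which skew shapes $\gamma/\lambda$ admit such a tableau, and showing that when one exists it is unique.

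The key observation is that a tableau of type $(n)$ consists of exactly $n$ boxes, each filled with the entry $1$. Imposing semistandardness, the weak row-increase is automatic, while strict column-increase forbids any two cells of $\gamma/\lambda$ from lying in the same column; this is exactly the condition that $\gamma/\lambda$ is a horizontal strip of size $n$. The Littlewood--Richardson lattice-permutation requirement on the reverse reading word is vacuous here, because the word $11\cdots 1$ has no $2$'s that could violate the inequality for the $i=1$ case, and no higher entries at all. Consequently $c_{(n),\lambda}^{\gamma}=1$ when $\gamma/\lambda$ is a horizontal strip of size $n$, and $c_{(n),\lambda}^{\gamma}=0$ otherwise, which is exactly the claimed sum.

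There is essentially no technical obstacle once the Littlewood--Richardson rule is granted; the only subtlety worth flagging is the commutativity step, which converts the LR count from tableaux on $\gamma/(n)$ (filled by entries of type $\lambda$, a more intricate combinatorial object) into tableaux on $\gamma/\lambda$ (filled by all $1$'s, which is immediate). An alternative route would be to prove the identity directly via the Jacobi--Trudi formula \eqref{eq-jt-identity} or via an RSK-type bijection between pairs $(T,w)$, where $T$ is a semistandard tableau of shape $\lambda$ and $w$ a weakly increasing word of length $n$, and semistandard tableaux of shape $\gamma$ with $\gamma/\lambda$ a horizontal strip of size $n$; but the LR-based route above is the most economical given what has already been developed in this section.
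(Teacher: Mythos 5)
Your derivation is correct: applying Theorem \ref{thm-LR} to $s_{(n)}s_{\lambda}$ (after commuting) reduces the count to semistandard fillings of $\gamma/\lambda$ with all entries equal to $1$, which exist, and are unique, exactly when $\gamma/\lambda$ is a horizontal strip of size $n$, the lattice-word condition being vacuous. This matches the paper's own point of view, which states Pieri's rule precisely as a special case of the Littlewood--Richardson rule and cites Stanley without giving a separate proof.
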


%

By applying the Littlewood-Richardson rule, one can expand the product of two Schur functions of hook shapes in terms of Schur functions. However, we failed to find an explicit expansion in the literature. For the sake of completeness, we state the following result and include a self-contained proof here.

\begin{lem}\label{lem-prod-hook-schur}
    For any integers $m, n\ge 1$ and $a, b\ge 0$, we have
    \begin{align}\label{eq-hook-hook}
&s_{(m,1^{a})}s_{(n,1^b)} =  \sum_{j=0}^{\min{\{m,n-1\}}} s_{(m+n-j,j+1,1^{a+b-1})} +
            \sum_{r=1}^{\min\{a,b-1\} }\sum_{j=1}^{\min\{m,n-1\} }s_{(m+n-j,j+1,2^r,1^{a+b-2r-1})}\nonumber\\[8pt]
        & + \sum_{r=0}^{\min\{a-1,b-1\} }\sum_{j=1}^{\min\{m,n\} }s_{(m+n-j+1,j+1,2^r,1^{a+b-2r-2})}+
        \sum_{j=0}^{\min{\{m-1,n-1\}}} s_{(m+n-j-1,j+1,1^{a+b})}\nonumber\\[8pt]
        & + \sum_{r=1}^{\min\{a,b\} }\sum_{j=1}^{\min\{m-1,n-1\} }s_{(m+n-j-1,j+1,2^r,1^{a+b-2r})} +
           \sum_{r=0}^{\min\{a-1,b\} }\sum_{j=1}^{\min\{m-1,n\} }s_{(m+n-j,j+1,2^r,1^{a+b-2r-1})}.
    \end{align}
\end{lem}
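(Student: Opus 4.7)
The plan is to apply the Littlewood-Richardson rule (Theorem~\ref{thm-LR}) directly. I fix $\mu = (n, 1^b)$ as the base shape and $\lambda = (m, 1^a)$ as the content, so that $c_{\lambda \mu}^{\gamma}$ counts semistandard fillings of $\gamma/\mu$ using $m$ copies of $1$ together with one each of $2, 3, \ldots, a+1$, whose reverse reading word is a lattice permutation.

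First, I would extract three structural constraints on any such filling. Since each of $2, 3, \ldots, a+1$ appears exactly once, the lattice-word condition forces these singletons to lie in pairwise distinct rows with smaller values strictly above larger ones (if two were in one row, reverse reading would read the larger before the smaller). Combined with weak row-increase, each row of $\gamma/\mu$ therefore consists of a prefix of $1$'s followed by at most one singleton from $\{2, \ldots, a+1\}$. Moreover, the $1$'s form a horizontal strip (no column contains two $1$'s). Given the shape $\mu = (n, 1^b)$, these constraints limit $\gamma$ sharply: rows $2, \ldots, b+1$ of $\gamma/\mu$ all begin at column $2$, so at most one of them can contain any $1$; similarly at most one row below row $b+1$ can contain $1$'s, since those rows of $\gamma/\mu$ all begin at column $1$. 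A parallel analysis for the column into which the singletons stack shows that $\gamma$ must have the form $(\gamma_1, j+1, 2^r, 1^s)$ for some non-negative integers $j, r, s$ with $\gamma_1 + (j+1) + 2r + s = m+n+a+b$.

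Second, I would split into cases according to (i) whether the rightmost cell of row~$1$ of $\gamma/\mu$ is a $1$ or a singleton, and (ii) whether the topmost cell of the column-$1$ stack of singletons (below row $b+1$) is occupied by a $1$ or by a singleton. The six possible combinations correspond precisely to the six summands in the formula. Terms~1 and 4 account for $r=0$ (no column-$2$ stack of singletons) with the two choices for how row~$1$ ends; Terms~2 and 5 cover $r\ge 1$ with row~$1$ ending in a $1$; Terms~3 and 6 cover $r\ge 0$ with row~$1$ ending in a singleton. For each fixed $\gamma$ on the right-hand side, the column-strictness together with the lattice condition forces all singletons into a single column, stacked in consecutive increasing order top to bottom, and the $1$'s then occupy the remaining cells of $\gamma/\mu$. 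Hence every shape in the list has Littlewood-Richardson coefficient exactly~$1$, confirming the claimed expansion.

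The main obstacle will be bookkeeping. One must verify that the summation ranges for $j$ and $r$ agree exactly with the feasibility constraints, namely that $\gamma$ is a partition ($\gamma_1 \geq j+1 \geq 2$), that the $1$'s genuinely form a horizontal strip, and that enough rows of $\mu$ are available to support the singleton stacks. The three possible values $\gamma_1 \in \{m+n-j-1,\; m+n-j,\; m+n-j+1\}$ encode the interplay between whether row~$1$ ends with a singleton and whether an additional $1$ is pushed into a fresh column-$1$ row; aligning these $\pm 1$ shifts with the index bounds (such as $\min(a-1, b)$ versus $\min(a, b-1)$) is where the argument is most delicate, and each such bound has to be traced back to the exact step at which a singleton "uses up" a slot that could have held a $1$.
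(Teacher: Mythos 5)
Your proposal follows essentially the same route as the paper's proof: expand via the Littlewood--Richardson rule applied to fillings of $\gamma/(n,1^b)$ of type $(m,1^a)$, use the lattice-word condition to force $\gamma_3\le 2$, so that $\gamma=(\gamma_1,j+1,2^r,1^s)$, and then sort the tableaux into six families according to where the $1$'s and the entry $2$ sit; these are exactly the paper's cases A.1--A.3 and B.1--B.3. Two local inaccuracies should be repaired before this is a proof. First, your dichotomy (i) cannot concern the top row of $\gamma/\mu$: the reverse reading word begins with the rightmost cell of that row, so the lattice condition forces it (hence the entire first row) to be filled with $1$'s; the relevant distinction is whether the \emph{second} row of $\gamma$ ends with a $1$ or with the singleton $2$, which is what separates the third and sixth sums from the rest, while the first and fourth sums are separated from each other by your dichotomy (ii) (the extra $1$ in column $1$), not by how a row ends. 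Second, the closing claim that ``every shape in the list has Littlewood--Richardson coefficient exactly $1$'' is false, and a verification based on it would not match the right-hand side: the same partition occurs in more than one of the six sums (the second and sixth sums range over the identical shapes $(m+n-j,j+1,2^r,1^{a+b-2r-1})$, and the first sum coincides with the $r=0$ part of the sixth), so some coefficients equal $2$; relatedly, the singletons are not all in one column in the A.2/B.2 and A.3/B.3 configurations, where they split between column $2$ (or the end of row $2$) and column $1$. The correct statement, which is what the paper proves, is that the six families are pairwise disjoint and exhaust all LR tableaux, and within each family the filling is uniquely determined by $(j,r)$; the multiplicity of $s_\gamma$ is then the number of family/parameter pairs yielding $\gamma$, which is precisely what the six-fold sum records. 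With these corrections and the index bookkeeping you already anticipate, your argument coincides with the paper's.
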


\begin{proof}
Assume that
    \begin{align*}
        s_{(m,1^a)}s_{(n,1^b)}=\sum_{\gamma}c_{\gamma}s_{\gamma},
    \end{align*}
where $\gamma=(\gamma_1,\gamma_2,\gamma_3,\ldots)$ ranges over all partitions of $m+n+a+b$.
We claim that if $\gamma_3\geq 3$ then $c_{\gamma}= 0$.
Actually, by Theorem \ref{thm-LR}, we know that $c_{\gamma}$ is equal to the number of Littlewood-Richardson tableaux $T$ of shape $\gamma/(n,1^b)$ and type $(m,1^a)$.
Note that when $\gamma_3\geq 3$ the diagram of $\gamma/(n,1^b)$ contains a $2\times 2$ subdiagram in the second and third rows. If there exists such a $T$, then its third row must start with a number $i\geq 2$ followed by a number $j>i$ by semistandardness and $\mathrm{type}(T)=(m,1^a)$, contradicting the lattice permutation condition of $T$.

From now on we may assume that $\gamma$ is of the form $(c,j+1,2^r,1^d)$ for some $c>j\ge 0$ and $r,d\ge 0$.
Note that each Littlewood-Richardson tableau $T$ of shape $\gamma/(n,1^b)$ and type $(m,1^a)$ will contribute a term $s_{\gamma}$ to $s_{(m,1^a)}s_{(n,1^b)}$. By the lattice permutation condition satisfied by $T$, the first row of $T$ can only be filled with $1$'s. Since $\gamma$ is of the form $(c,j+1,2^r,1^d)$, a little thought shows that if the position of $2$ in $T$ is fixed then there is a unique way to fill the boxes of $T$ with the remaining numbers $3, 4, \ldots, a+1$. In the following we shall divide all such Littlewood-Richardson tableaux into six families according to the relative position of $1$'s and $2$'s.
Since the column of $T$ is strictly increasing, there is at most one $1$ in each column of $T$. These six cases can be placed into one of two categories, say (A) and (B), the first of which consists of those tableaux without the occurrence of $1$ in the first column of $\gamma$ as shown by A.1, A.2 and A.3, and the second of which consists of those tableaux with one $1$ appearing in the first column of $\gamma$ as shown by  B.1, B.2 and B.3.
See Figure \ref{fig-5} for an illustration.

\begin{center}
    \ytableausetup{mathmode, boxsize=1.2em, nosmalltableaux}
    \makeatletter\def\@captype{figure}\makeatother
    \begin{minipage}{.34\textwidth}
    \begin{ytableau}
         *(gray) & *(gray) & *(gray) {\cdots} &  *(gray) & *(gray) & *(gray) & \textcolor{blue}{1} &\textcolor{blue}{\cdots}& \textcolor{blue}{1}  \\
         *(gray) & \textcolor{blue}{1} &\textcolor{blue}{\cdots}& \textcolor{blue}{1}  \null \\
         *(gray)  \\
         *(gray)\vdots \\
         *(gray) \\
         \textcolor{red}{{\bf 2}} \\
         {3} \\
         {\vdots}\\
         \scriptstyle a+1
    \end{ytableau}
    \\ \centering A.1
    \end{minipage}
    \makeatletter\def\@captype{figure}\makeatother
    \begin{minipage}{.34\textwidth}
        \begin{ytableau}
            *(gray) & *(gray) & *(gray){\cdots} & *(gray) & *(gray) & *(gray) & \textcolor{blue}{1} &\textcolor{blue}{\cdots}& \textcolor{blue}{1}  \\
            *(gray) & \textcolor{blue}{1} &\textcolor{blue}{\cdots}& \textcolor{blue}{1}  \null \\
            *(gray)  & \textcolor{red}{{\bf 2}} \\
            *(gray)\vdots & \vdots \\
            *(gray) & \scriptstyle{r+1} \\
            *(gray) \\
            \scriptstyle{r+2}\\
            \vdots\\
            {{\scriptstyle a+1}} \\
       \end{ytableau}
       \\ \centering A.2
    \end{minipage}
    \makeatletter\def\@captype{figure}\makeatother
    \begin{minipage}{.3\textwidth}
        \begin{ytableau}
            *(gray) & *(gray) & *(gray)\cdots &  *(gray) & *(gray) & *(gray) & \textcolor{blue}{1} &\textcolor{blue}{\cdots}& \textcolor{blue}{1}  \\
            *(gray) & \textcolor{blue}{1} &\textcolor{blue}{\cdots}& \textcolor{blue}{1}& \textcolor{red}{{\bf 2}} \null \\
            *(gray) &  {3}  \\
            *(gray) \vdots &  {\vdots} \\
            *(gray) & \scriptstyle {r+2}\\
            *(gray) \\
            \scriptstyle {r+3}\\
            {\vdots}\\
            \scriptstyle {a+1}\\
       \end{ytableau}
       \\ \centering A.3
    \end{minipage}
\end{center}

\begin{center}
    \ytableausetup{mathmode, boxsize=1.2em, nosmalltableaux}
    \makeatletter\def\@captype{figure}\makeatother
    \begin{minipage}{.34\textwidth}
    \begin{ytableau}
         *(gray) & *(gray) & *(gray)\cdots &  *(gray) & *(gray) & *(gray) & \textcolor{blue}{1} &\textcolor{blue}{\cdots}& \textcolor{blue}{1}  \\
         *(gray) & \textcolor{blue}{1} &\textcolor{blue}{\cdots}& \textcolor{blue}{1}  \null \\
         *(gray)  \\
         *(gray) \vdots \\
         *(gray) \\
         \textcolor{blue}{{\bf 1}} \\
         \textcolor{red}{{\bf 2}} \\
         3\\
         \vdots\\
         \scriptstyle {a+1}\\
    \end{ytableau}
    \\ \centering B.1
    \end{minipage}
    \makeatletter\def\@captype{figure}\makeatother
    \begin{minipage}{.34\textwidth}
        \begin{ytableau}
            *(gray) & *(gray) & *(gray)\cdots & *(gray) & *(gray) & *(gray) & \textcolor{blue}{1} &\textcolor{blue}{\cdots}& \textcolor{blue}{1}  \\
            *(gray) & \textcolor{blue}{1} &\textcolor{blue}{\cdots}& \textcolor{blue}{1}  \null \\
            *(gray)  & \textcolor{red}{{\bf 2}} \\
            *(gray) \vdots & \vdots \\
            *(gray) & \scriptstyle {r+1}\\
            *(gray) \\
            \textcolor{blue}{{\bf 1}} \\
            \scriptstyle {r+2}\\
            {\vdots}\\
            \scriptstyle {a+1}\\
       \end{ytableau}
       \\ \centering B.2
    \end{minipage}
    \makeatletter\def\@captype{figure}\makeatother
    \begin{minipage}{.3\textwidth}
        \begin{ytableau}
            *(gray) & *(gray) & *(gray) \cdots &  *(gray) & *(gray) & *(gray) & \textcolor{blue}{1} &\textcolor{blue}{\cdots}& \textcolor{blue}{1}  \\
            *(gray) & \textcolor{blue}{1} &\textcolor{blue}{\cdots}& \textcolor{blue}{1}& \textcolor{red}{{\bf 2}} \null \\
            *(gray) &  \textcolor{blue}{3}  \\
            *(gray) \vdots & \vdots \\
            *(gray) &\scriptstyle {r+2} \\
            *(gray) \\
            \textcolor{blue}{{\bf 1}} \\
            \scriptstyle {r+3}\\
            {\vdots}\\
            \scriptstyle {a+1}\\
       \end{ytableau}
       \\ \centering B.3
    \end{minipage}
    \caption{Littlewood-Richardson tableaux of shape $(c,j+1,2^r,1^d)/(n,1^b)$ and type $(m,1^a)$}\label{fig-5}
\end{center}

The aforementioned cases are listed as follows.

\begin{itemize}
\item[(\textbf{A})] $T$ has no occurrence of $1$ in the first column of $\gamma$.
\begin{itemize}
    \item[(\textbf{A.1})] $T$ has $j$ occurrences of $1$ in the second row of $\gamma$ and one occurrence of $2$ in the first column. In this case, we have $0\le j\le \min{\{m,n-1\}}$ and    $\gamma=(m+n-j,j+1,1^{a+b-1})$, corresponding to the first summation of \eqref{eq-hook-hook}.

    \item[(\textbf{A.2})]  $T$ has $j$ occurrences of $1$ in the second row of $\gamma$, and the number $2$ lies in its second column and third row.
        For this case, we have $1\le j \le \min\{m,n-1\}$. Moreover, if $T$ contains entries $3, 4, \ldots,r+1$ in its second column, then $1\le r \le \min\{a,b-1\}$ and     $\gamma=(m+n-j,j+1,2^r,1^{a+b-2r-1})$. This case contributes the second summation of \eqref{eq-hook-hook} to the Schur expansion of $s_{(m,1^a)}s_{(n,1^b)}$.

    \item[(\textbf{A.3})] $T$ has $j-1$ occurrences of $1$ and one occurrence of $2$ in the second row of $\gamma$. In this case, we have $1\le j \le \min\{m,n\}$ since there must exist $1$ in the first row of $T$ by the lattice permutation condition. Moreover, if $T$ contains entries $3, 4, \ldots,r+2$ in its second column, then $0\le r \le \min\{a-1,b-1\}$ and $\gamma=(m+n-j+1,j+1,2^r,1^{a+b-2r-2})$. This case contributes the third summation of \eqref{eq-hook-hook} to the Schur expansion of $s_{(m,1^a)}s_{(n,1^b)}$.
\end{itemize}

\item[(\textbf{B})] $T$ has an entry $1$ in the first column of $\gamma$.
    \begin{itemize}
    \item[(\textbf{B.1})] Similar to Case A.1. $T$ also has $j$ occurrences of $1$ in the second row of $\gamma$ and one occurrence of $2$ in the first column. In this case, we have $0\le j\le \min{\{m-1,n-1\}}$ and $\gamma=(m+n-j-1,j+1,1^{a+b})$, corresponding to the fourth summation of \eqref{eq-hook-hook}.

    \item[(\textbf{B.2})] Similar to Case A.2.
    $T$ has $j$ occurrences of $1$ in the second row of $\gamma$, and the number $2$ lies in its second column and third row.
        For this case, we have $1\le j \le \min\{m-1,n-1\}$. Moreover, if $T$ contains entries $3, 4, \ldots,r+1$ in its second column, then $1\le r \le \min\{a,b\}$ and $\gamma=(m+n-j-1,j+1,2^r,1^{a+b-2r})$. This case contributes the fifth summation of \eqref{eq-hook-hook} to the Schur expansion of $s_{(m,1^a)}s_{(n,1^b)}$.

    \item[(\textbf{B.3})] Similar to Case A.3. $T$ also has $j-1$ occurrences of $1$ and one occurrence of $2$ in the second row of $\gamma$. In this case, we have $1\le j \le  \min\{m-1,n\}$. Moreover, if $T$ contains entries $3, 4, \ldots,r+2$ in its second column, then $0\le r \le \min\{a-1,b\}$ and $\gamma=(m+n-j,j+1,2^r,1^{a+b-2r-1})$. This case corresponds to the sixth summation of \eqref{eq-hook-hook}.
\end{itemize}
\end{itemize}
Combining these six cases, we complete the proof.
\end{proof}

Based on the above lemma, we obtain the following Schur positivity result on the differences of products of two Schur functions of hook shapes.

\begin{cor}\label{cor-sp-hook-schur}
For integers $a,b,p,q$ satisfying $1\le a\le b$ and $ 1\le p\le q$, we have
        \begin{align}\label{eq-spos-hook-1}
            s_{(q,1^{a})}s_{(p,1^{b})}\ge_s s_{(q+1,1^{a-1})}s_{(p-1,1^{b+1})}.
        \end{align}
    \end{cor}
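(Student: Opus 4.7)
The plan is to apply Lemma \ref{lem-prod-hook-schur} directly to both sides of \eqref{eq-spos-hook-1}, expand each product in the Schur basis, and then verify the coefficient-wise inequality. First I would dispose of the degenerate case $p=1$: the sequence $(p-1, 1^{b+1})=(0, 1^{b+1})$ has a Jacobi--Trudi determinant with two equal rows (first and second), so $s_{(p-1, 1^{b+1})}=0$ and \eqref{eq-spos-hook-1} reduces to the automatic Schur positivity of $s_{(q, 1^a)}s_{(1, 1^b)}$. Henceforth I assume $p\geq 2$.

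Substituting $(m,n,a,b)=(q,p,a,b)$ on the left of \eqref{eq-spos-hook-1} and $(m,n,a,b)=(q+1, p-1, a-1, b+1)$ on the right, Lemma \ref{lem-prod-hook-schur} expresses both products as nonnegative sums of Schur functions $s_\gamma$ over partitions $\gamma=(c, j+1, 2^r, 1^d)$ of the common size $p+q+a+b$. Grouping the six summations by the shape they produce, the coefficient of each $s_\gamma$ becomes a short sum of indicator functions of $(r,j)$, with index ranges controlled by minima such as $\min\{m, n-1\}$ and $\min\{a-1, b\}$ coming from the lemma. The hypotheses $1\leq p\leq q$ and $1\leq a\leq b$ translate into
\begin{align*}
    \min\{q, p-1\}=p-1 &\geq p-2=\min\{q+1, p-2\},\\
    \min\{a-1, b\}=a-1 &\geq a-2=\min\{a-2, b+1\},\\
    \min\{a, b-1\} &\geq a-1,
\end{align*}
so that for every shape family the index range on the left contains the range on the right. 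It follows that the difference $s_{(q,1^a)}s_{(p,1^b)}-s_{(q+1, 1^{a-1})}s_{(p-1, 1^{b+1})}$ is a nonnegative integer linear combination of Schur functions.

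The main obstacle is the bookkeeping for the shape family $(p+q-j, j+1, 2^r, 1^{a+b-2r-1})$ with $r\geq 0$, which receives contributions from three of the six summations (Sums~I, II, and VI) on each side. Verifying coefficient-wise nonnegativity for this family requires splitting into subcases according to whether $r=0$, $1\leq r\leq a-1$, $r=a$, or $r>a$, and within these according to whether $p=q$ or $p<q$ and whether $a=b$ or $a<b$. In every subcase the excess coefficient on the left turns out to be an elementary nonnegative indicator such as $[j=p-1]$, $[j=p]$ (when $q>p$), or $[a=1]$. The remaining shape families---namely $(p+q-j+1, j+1, 2^r, 1^{a+b-2r-2})$ from Sum~III, $(p+q-j-1, j+1, 1^{a+b})$ from Sum~IV, and $(p+q-j-1, j+1, 2^r, 1^{a+b-2r})$ from Sum~V---each come from a single summation on either side, so the comparisons there are immediate and yield similar nonnegative excesses, completing the Schur positivity verification.
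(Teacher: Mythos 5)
Your proposal is correct and follows essentially the same route as the paper: expand both sides by Lemma \ref{lem-prod-hook-schur} with the substitutions $(m,n,a,b)=(q,p,a,b)$ and $(q+1,p-1,a-1,b+1)$, observe that the summands agree because $p+q$ and $a+b$ are unchanged, and conclude from the containment of the index ranges given by the inequalities among the minima. The only differences are cosmetic: the paper compares the $l$-th summation on the left directly with the $l$-th summation on the right (which makes your shape-by-shape regrouping of Sums I, II, VI and the attendant subcases unnecessary, since each summation already dominates its counterpart termwise), and your separate treatment of the degenerate case $p=1$, where $s_{(0,1^{b+1})}=0$, is a reasonable addition that the paper leaves implicit.
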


    \begin{proof}
        We use \eqref{eq-hook-hook} to expand $s_{(q,1^{a})}s_{(p,1^{b})}$ and
        $s_{(q+1,1^{a-1})}s_{(p-1,1^{b+1})}$ explicitly.
        Since
        \begin{align*}
            q+p=(q+1)+(p-1)\qquad a+b=(a-1)+(b+1),
        \end{align*}
        the $l$-th ($1\leq l\leq 6$) summation in the Schur expansion of $s_{(q,1^{a})}s_{(p,1^{b})}$ has the same summand as that  of $s_{(q+1,1^{a-1})}s_{(p-1,1^{b+1})}$.
        Moreover, the number of terms in the former summation is at least as great as the number of terms in the former summation, since
           \begin{align*}
             \min{\{q,p-1\}}=p-1   &> p-2=\min{\{q+1,p-2\}}\\
             \min{\{q-1,p-1\}}=p-1 &> p-2=\min{\{q,p-2\}} \\
             \min{\{q,p\}}=p       &> p-1=\min{\{q+1,p-1\}}\\
             \min{\{q-1,p\}}=
               \left\{\begin{array}{ll}
                      p-1,& \mbox{ if } q=p\\
                       p, & \mbox{ otherwise}
                      \end{array}\right.     &\ge p-1=\min{\{q,p-1\}}\\
             \min{\{a,b-1\}}=
               \left\{\begin{array}{ll}
                      a-1,& \mbox{ if }b=a\\
                       a, & \mbox{ otherwise}
               \end{array}\right.            &\ge a-1=\min{\{a-1,b\}}\\
             \min{\{a-1,b-1\}}=a-1   &> a-2=\min{\{a-2,b\}}\\
             \min{\{a,b\}}=a         &> a-1=\min{\{a-1,b+1\}}\\
             \min{\{a-1,b\}}=a-1     &> a-2=\min{\{a-2,b+1\}}.
         \end{align*}
    This completes the proof.
    \end{proof}

Bergeron, Biagioli, and Rosas \cite[Theorem 3.1]{BBR-2005} also obtained some Schur positivity results on the differences of products of Schur functions of hook shapes, thus providing some positive evidence for a conjecture proposed by Fomin, Fulton, Li, and Poon \cite{FFLP-2005} on the $\ast$-operation of ordered pairs of partitions.
Given an ordered pair $(\mu,\nu)$ of partitions with the same number of parts, define a new ordered pair $(\mu,\nu)^*=(\lambda(\mu,\nu), \rho(\mu,\nu))$ by
\begin{align*}
    \lambda_k= &\mu_k-k+\#\{j\,|\,\nu_j-j\ge \mu_k-k\}\\
    \rho_j= &\nu_j-j+1+\#\{k\,|\,\mu_k-k>\nu_j-j \}.
\end{align*}
Fomin, Fulton, Li, and Poon conjectured that the difference $s_{\lambda(\mu,\nu)}s_{\rho(\mu,\nu)}-s_{\mu}s_{\nu}$ is $s$-positive for any ordered pair $(\lambda,\mu)$. Bergeron, Biagioli, and Rosas confirmed this conjecture for many infinite families {\cite[Theorem 3.1]{BBR-2005}},
including pairs of hook shapes. By the definition of $*$-operation it is routine to verify that, for $m,n\geq 1$,
    \begin{align*}
        \left((m,1^i),(n,1^j)\right)^{*}=
        \begin{cases}
            \left((m,1^{j-1}),(n,1^{i+1})\right),\qquad \mbox{ if } \,m\le n \mbox{ and } 0\le i< j\\[5pt]
            \left((m,1^{j}),(n,1^{i})\right),\qquad \mbox{ if }\,m\le n \mbox{ and }  i\ge j\ge 0\\[5pt]
            \left((m-1,1^{j-1}),(n+1,1^{i+1})\right),\qquad \mbox{ if }\,m>n \mbox{ and }  0\ge i<j\\[5pt]
            \left((m-1,1^{j}),(n+1,1^{i})\right),\qquad \mbox{ if }\,m> n\quad i\ge j\ge 0.
        \end{cases}
    \end{align*}
Bergeron, Biagioli, and Rosas obtained the following result.

\begin{thm}[{\cite[Theorem 3.1]{BBR-2005}}]\label{thm-hook-hook}
\begin{itemize}
\item[(i)] If $1\le m\le n$ and $0\leq i< j$, then $s_{(m,1^i)}s_{(n,1^j)}\le_s s_{(m,1^{j-1})}s_{(n,1^{i+1})}$.
\item[(ii)] If $1\leq m\le n$ and  $i\ge j\geq 0$,  then $s_{(m,1^i)}s_{(n,1^j)}\le_s s_{(m,1^{j})}s_{(n,1^{i})}$.

\item[(iii)] If $m>n\geq 1$  and $0\leq i< j$,  then  $s_{(m,1^i)}s_{(n,1^j)}\le_s s_{(m-1,1^{j-1})}s_{(n+1,1^{i+1})}$.

\item[(iv)]  If $m>n\geq 1$  and $i\ge j\geq 0$, then  $s_{(m,1^i)}s_{(n,1^j)}\le_s s_{(m-1,1^{j})}s_{(n+1,1^{i})}$.
\end{itemize}
\end{thm}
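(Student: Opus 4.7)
My plan is to prove all four parts by direct expansion using Lemma \ref{lem-prod-hook-schur}, in the same spirit as the proof of Corollary \ref{cor-sp-hook-schur}. The central structural observation is that Lemma \ref{lem-prod-hook-schur} writes each product $s_{(m,1^a)}s_{(n,1^b)}$ as six double summations $\sum_{r,k} s_{\gamma(r,k)}$ whose summands depend only on $m+n$, $a+b$ and the indices $(r,k)$, while the ranges of the summations are $\min$'s of quantities built from $\{m,m-1,n-1,n\}$ and $\{a,a-1,b-1,b\}$. Since each of the cases (i)--(iv) preserves the totals $m+n$ and $a+b$, corresponding summands on the two sides match exactly, and each inequality collapses to comparing six pairs of upper bounds.

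For case (ii) the row parameters coincide on both sides, so only the $r$-bounds differ. Using $i\ge j$, one finds $\min\{i,j-1\}\le \min\{j,i-1\}$ and $\min\{i-1,j\}\ge \min\{j-1,i\}$, so that the RHS gains extra terms in summation $2$ at $r=j$ while the LHS gains extra terms in summation $6$ at $r=j$. The corresponding $k$-ranges $\min\{m,n-1\}$ and $\min\{m-1,n\}$, compared via $m\le n$, leave an explicit Schur-positive surplus (a single summand $s_{(n,m+1,2^j,1^{i-j-1})}$ when $m<n$ and $i>j$, and $0$ at the boundaries $m=n$ or $i=j$). Case (i) is analogous: the row parts are again unchanged, and the $r$-bound comparisons reduce to standard inequalities relating $\min\{i,j-1\}$, $\min\{i-1,j\}$ etc. to the corresponding quantities for the more balanced column pair $(i+1,j-1)$.

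The main obstacle will be cases (iii) and (iv), where the row parameters also shift, from $(m,n)$ with $m>n$ to $(m-1,n+1)$. All six $k$-bounds now change, and the relative order of $m-1$ and $n+1$ depends on whether $m-n\ge 2$ or $m-n=1$. I expect to handle this by a direct case split on the sign of $(m-n)-2$, with separate attention to the boundary values $i=j-1$ in (iii) and $i=j$ in (iv), where several summations collapse. If the bookkeeping grows unwieldy, a cleaner route may be to reduce (iii) and (iv) to (i) and (ii) via Proposition \ref{thm-rotate-180} together with a suitable reindexing, cutting the number of independent cases in half.
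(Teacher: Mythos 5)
The paper itself does not prove this theorem: it is imported from \cite{BBR-2005}, and the remark following it only asserts that Lemma \ref{lem-prod-hook-schur} permits a direct verification. Your plan is exactly that verification, and its core is sound for (i) and (ii): since all four moves preserve $m+n$ and $i+j$, the summands in the six families of \eqref{eq-hook-hook} agree on both sides and only the summation limits change. Your case (ii) bookkeeping is correct for $i>j\ge 1$ (the surpluses sit in the second summation on the right and the sixth on the left, with identical shapes, leaving exactly the single term $s_{(n,m+1,2^j,1^{i-j-1})}$ when $m<n$), and case (i) is even easier because there every family's range on the right dominates the corresponding range on the left.

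Two concrete gaps remain. First, Lemma \ref{lem-prod-hook-schur} is not reliable on the whole boundary of the theorem's parameter range: when the second factor has no leg ($b=0$) while the first does, the first summation overcounts, because its case A.1 terms with inner index $\ge 1$ presuppose rows below the second row in which to place the entry $2$. For instance the formula gives $s_{(1,1)}s_{(2)}=s_{(3,1)}+s_{(2,2)}+s_{(2,1,1)}$, whereas in fact $s_{(1,1)}s_{(2)}=s_{(3,1)}+s_{(2,1,1)}$. Cases (ii) and (iv) include $j=0$, and there the spurious terms land on the left-hand product (the side to be dominated), so your range comparison would appear to go the wrong way; you must handle $j=0$ separately (Pieri's rule suffices) or expand with the factors in the order for which the formula is valid. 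Second, (iii) and (iv) are only sketched, and the shortcut you float does not work as stated: Proposition \ref{thm-rotate-180} ($180^{\circ}$ rotation) fixes the Schur function of a straight hook but does not exchange its arm and leg, so it cannot convert the row-shifting moves (iii)--(iv) into the leg-shifting moves (i)--(ii). The involution that does this is conjugation of all shapes, which preserves Schur positivity; it turns (iv) into (i) but sends (iii) to another instance of (iii), so (iii) still has to be done by hand. Moreover, for (iii) with $m=n+1$ (and for (iv) attacked directly) the family-by-family comparison genuinely fails --- for example the sixth summation's $k$-range is $\min\{m-1,n\}=n$ on the left but $\min\{m-2,n+1\}=n-1$ on the right --- so you must aggregate, for each shape, the contributions of families 1, 2, 6 (and of 4, 5) before comparing; the clean ``six pairs of upper bounds'' picture is not enough there.
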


We would like to point out Lemma \ref{lem-prod-hook-schur} allows us to directly verify the above theorem.
It is natural to consider whether Theorem \ref{thm-hook-hook} can be used
to prove Corollary \ref{cor-sp-hook-schur}.
In fact, it is also possible to use Theorem \ref{thm-hook-hook} to derive \eqref{eq-spos-hook-1}, but in an indirect way.

\noindent \textit{The second proof of Corollary \ref{cor-sp-hook-schur}.}
Note that $1\le a\le b$ and $1\le p\le q$. First, by using (ii) of Theorem \ref{thm-hook-hook}, we get
$$s_{(p-1,1^{b+1})}s_{(q+1,1^{a-1})}
                \le_s s_{(p-1,1^{a-1})}s_{(q+1,1^{b+1})}.$$
Secondly, by applying (iv) of Theorem \ref{thm-hook-hook}, we obtain
$$
s_{(q+1,1^{b+1})}s_{(p-1,1^{a-1})}
                \le_s s_{(q,1^{a-1})}s_{(p,1^{b+1})}.
$$
Again, by using (ii) of Theorem \ref{thm-hook-hook}, we find that
$$
s_{(p,1^{b+1})}s_{(q,1^{a-1})}
                \le_s s_{(p,1^{a-1})}s_{(q,1^{b+1})}.$$
Finally, by (i) of Theorem \ref{thm-hook-hook} we get
$$s_{(p,1^{a-1})}s_{(q,1^{b+1})} \le_s s_{(p,1^{b})}s_{(q,1^{a})}.$$
Combining the above four inequalities leads to
$$s_{(q+1,1^{a-1})}s_{(p-1,1^{b+1})}\le_s s_{(q,1^{a})}s_{(p,1^{b})},$$
as desired.
\qed

\section{Invariants of $q$-niform and uniform matroids}
\label{sect-4}

The main objective of this section is to study the induced log-concavity of some polynomial invariants associated with equivariant $q$-niform and uniform matroids, including equivariant characteristic polynomials, equivariant Kazhdan-Lusztig polynomials, equivariant inverse Kazhdan-Lusztig polynomials, and equivariant $Z$-polynomials.

Before our detailed investigation of the log-concavity of these polynomials, let us first recall some terminology and notation in matroid theory. {For more information on matroids, see Oxley \cite{Oxley-2011}.}
 Let $M=(E,\mathcal{F})$ be a matroid with ground set $E$ and the set of flats $\mathcal{F}$.
 We write $\mathcal{L}(M)$ for the lattice of flats of $M$.
 For any flat $F$ of $M$, let $M^F$ denote the localization of $M$ at $F$ with ground set $F$, and
 each of its flats is the intersection of $F$ and a flat
 of $M$. Dually, let $M_F$ be the contraction of $M$ at $F$, and this is a matroid on the ground set $E\setminus F$
 with each of its flats being $G\setminus F$ for some flat $G$ containing $F$ in $M$.
 For any subset $I$ of $E$, let $\rk I$ denote the rank of $I$ in the matroid $M$,
 and let $\rk M=\rk E$.
For any matroid $M=(E,\mathcal{F})$, let $W$ be a finite group acting on $E$ and preserving $M$.
Such a matroid $M$ equipped with the action of $W$ is called an equivariant matroid, denoted by $W\curvearrowright M$.

We proceed to recall the definition of uniform matroids and $q$-niform matroids. For any integers $m\ge 0$ and $d\ge 1$, let $U_{m,d}$ denote the uniform matroid of rank $d$ on ground set $\{1,2,\ldots,m+d\}$.
The uniform matroid $U_{m,d}$ naturally admits the action of the symmetric group $\mathfrak{S}_{m+d}$.
For any prime power $q$, a $q$-analogue of $U_{m,d}$, denoted by $U_{m,d}(q)$,  was studied in \cite{HRS-2021}, whose bases are linearly independent subsets of $\mathbb{F}^{m+d}_q$ of size $d$. The matroid $U_{m,d}(q)$ was called $q$-niform matroid by Proudfoot \cite{P-2019}.
Proudfoot noted that the $q$-niform matroid $U_{m,d}(q)$ naturally admits the action of the finite general linear group $\GL_{m+d}(\mathbb{F}_q)$. This section is mainly concerned with the equivariant matroids $\mathfrak{S}_{m+d} \curvearrowright  U_{m,d}$ and $\GL_{m+d}(\mathbb{F}_q) \curvearrowright U_{m,d}(q)$.

In the following we will successively study the induced log-concavity of equivariant characteristic polynomials, equivariant Kazhdan-Lusztig polynomials, equivariant inverse Kazhdan-Lusztig polynomials, and equivariant $Z$-polynomials of $\mathfrak{S}_{m+d} \curvearrowright  U_{m,d}$ and $\GL_{m+d}(\mathbb{F}_q) \curvearrowright U_{m,d}(q)$. Throughout this section, we always mean the induced log-concavity of representations of $\mathfrak{S}_{m+d}$ and $\GL_{m+d}(\mathbb{F}_q)$ in the sense of Section \ref{sect-2}, and simply say that these polynomials are inductively log-concave or strongly inductively log-concave. Our general strategy is to use Proposition \ref{prop-unirep-to-irrdrep} to transform problems on polynomial invariants of $\GL_{m+d}(\mathbb{F}_q) \curvearrowright U_{m,d}(q)$ to problems on the corresponding counterparts of $\mathfrak{S}_{m+d} \curvearrowright  U_{m,d}$, and further to use the Frobenius characteristic map to transform to problems on certain symmetric functions.
%
%
%
%

\subsection{Equivariant characteristic polynomials}\label{subsect-4.1}

In this subsection, we aim to prove the strongly induced log-concavity of equivariant characteristic polynomials of
$\mathfrak{S}_{m+d} \curvearrowright  U_{m,d}$ and $\GL_{m+d}(\mathbb{F}_q) \curvearrowright U_{m,d}(q)$.

Given a matroid $M$ with ground set $E$, recall that its characteristic polynomial $\chi_M(t)$ and reduced characteristic polynomial $\overline{\chi}_M(t)$ of $M$ can be defined as
\begin{align}\label{eq-char-rchar-pol}
    \chi_M(t)=\sum_{I\subseteq E}(-1)^{|I|}t^{\rk M-\rk I},\qquad
    ({t-1})\overline{\chi}_M(t)={\chi_M(t)}.
\end{align}
For an equivariant matroid $W\curvearrowright M$,
Gedeon, Proudfoot and Young  \cite{GPY-2017} introduced the concept of the equivariant characteristic polynomial, denoted by $H_M^W(t)$. This polynomial is given by
\begin{align*}
    H_M^W(t):=\sum_{i=0}^{\rk M}(-1)^i t^{\rk M-i}\OS_{M,i}^W\in \VRep(W)[t],
\end{align*}
where $\OS_{M,i}^W\in\Rep(W)$ is the degree $i$ part of the Orlik-Solomon algebra of $M$. For the definition of Orlik-Solomon algebra of a matroid, see \cite[Definition 3.1]{EF-1999}.
Gedeon, Proudfoot and Young  \cite[Lemma 2.1]{GPY-2017} showed that $H_M^W(\tau)=0$, an equivariant version of the relation $\chi_M(1)=0$. Inspired by \eqref{eq-char-rchar-pol}, it is natural to introduce the equivariant reduced characteristic polynomial $\widetilde{H}^W_M(t)$ of $W\curvearrowright  M$ by
    \begin{align}\label{eq-equivariant-red-char-pol}
        (t-\tau)\widetilde{H}^W_M(t)=H_M^W(t).
    \end{align}

Gedeon, Proudfoot, and Young \cite[Proposition 3.9]{GPY-2017} proved that the equivariant characteristic polynomial of $\mathfrak{S}_{m+d} \curvearrowright U_{m,d}$ is
\begin{align}\label{eq-char-pol-umd}
    H_{U_{m,d}}^{\mathfrak{S}_{m+d}}(t)
    & =\sum_{i=0}^{d-1}(-1)^i\left(V_{(m+d-i,1^i)}+V_{(m+d-i+1,1^{i-1})}\right)t^{d-i}+(-1)^dV_{(m+1,1^{d-1})}.
\end{align}
Based on the Comparison theorem, Proudfoot \cite{P-2019} noted that the equivariant characteristic polynomial of $GL_{m+d}(q)\curvearrowright U_{m,d}(q)$ is given by
\begin{align}\label{eq-char-pol-qniform}
    H_{U_{m,d}(q)}^{\GL_{m+d}(\mathbb{F}_q)}(t)
    & =\sum_{i=0}^{d-1}(-1)^i\left(V_{(m+d-i,1^i)}(q)+V_{(m+d-i+1,1^{i-1})}(q)\right)t^{d-i}
    +(-1)^dV_{(m+1,1^{d-1})}(q).
\end{align}
By comparing coefficients, from \eqref{eq-equivariant-red-char-pol}, \eqref{eq-char-pol-umd} and \eqref{eq-char-pol-qniform} it follows that the equivariant reduced characteristic polynomials of $\mathfrak{S}_{m+d} \curvearrowright U_{m,d}$
    and $\GL_{m+d}(\mathbb{F}_q)\curvearrowright U_{m,d}(q)$ are given by
    \begin{align*}
        \widetilde{H}_{U_{m,d}}^{\mathfrak{S}_{m+d}}(t)
        & =\sum_{i=1}^{d}(-1)^{i-1}V_{(m+d-i+1,1^{i-1})}t^{d-i},\\
        \widetilde{H}_{U_{m,d}(q)}^{\GL_{m+d}(\mathbb{F}_q)}(t)
        & =\sum_{i=1}^{d}(-1)^{i-1}V_{(m+d-i+1,1^{i-1})}(q)t^{d-i}.
    \end{align*}
%

By abuse of notation, we say that these (reduced) characteristic polynomials are induced log-concave or strongly inductively log-concave, if their corresponding coefficient sequences without signs are induced log-concave or strongly inductively log-concave.
We have the following result.

\begin{thm}\label{thm-charpol-indlog-q}
    For any $d\ge 1$ and $m\ge 0$, the polynomial $\widetilde{H}_{U_{m,d}(q)}^{\GL_{m+d}(\mathbb{F}_q)}(t)$ is strongly inductively log-concave, and so is $\widetilde{H}_{U_{m,d}}^{\mathfrak{S}_{m+d}}(t)$.
\end{thm}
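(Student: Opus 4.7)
The plan is to reduce both assertions to a single Schur positivity statement already established in Section \ref{sect-3}. First, by applying Proposition \ref{prop-unirep-to-irrdrep} coefficient-by-coefficient, the polynomial $\widetilde{H}_{U_{m,d}(q)}^{\GL_{m+d}(\mathbb{F}_q)}(t)$ is strongly inductively log-concave if and only if $\widetilde{H}_{U_{m,d}}^{\mathfrak{S}_{m+d}}(t)$ is. Indeed, the canonical bijection $\psi$ matches $V_{\lambda}(q)$ with $V_{\lambda}$ term-by-term, and the Comparison Theorem guarantees that the triples $(\GL_{2(m+d)}(\mathbb{F}_q), P_{m+d,\,2(m+d)}(\mathbb{F}_q), \iota)$ and $(\mathfrak{S}_{2(m+d)}, \mathfrak{S}_{m+d}\times \mathfrak{S}_{m+d}, \mathrm{id})$ produce the same Harish-Chandra versus induction-product multiplicities on unipotent coefficients. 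So it suffices to treat the symmetric group version.

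Next, I would relabel the unsigned coefficients of $\widetilde{H}_{U_{m,d}}^{\mathfrak{S}_{m+d}}(t)$ by setting $C_k := V_{(m+k+1,\,1^{d-k-1})}$ for $k = 0,1,\dots,d-1$, and invoke Proposition \ref{prop-il-to-sp}. Strong inductive log-concavity then reduces to the Schur positivity
\[
s_{(m+k+1,\,1^{d-k-1})}\,s_{(m+l+1,\,1^{d-l-1})} \;\ge_s\; s_{(m+k,\,1^{d-k})}\,s_{(m+l+2,\,1^{d-l-2})}
\]
for every pair of indices $1 \le k \le l \le d-2$; the boundary case $l = d-1$ is automatic, since then $C_{l+1} = 0$ and the containment $\mathrm{Ind}(C_k \boxtimes C_l) \supseteq 0$ holds trivially.

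To finish, I would substitute $p := m+k+1$, $q := m+l+1$, $a := d-l-1$, $b := d-k-1$. Since $m \ge 0$ and $1 \le k \le l$, one has $1 \le p \le q$; and since $l \le d-2$ and $k \le l$, one has $1 \le a \le b$. The displayed inequality then reads exactly as the conclusion of Corollary \ref{cor-sp-hook-schur} for these parameters, so the theorem follows. The main point is that there is no serious obstacle at this stage: the genuine combinatorial work has already been absorbed into the proof of Corollary \ref{cor-sp-hook-schur} in Section \ref{sect-3}, and the remaining tasks amount only to bookkeeping of the hook parameters and dispatching the trivial boundary case $l = d-1$.
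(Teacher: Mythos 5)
Your proposal is correct and follows essentially the same route as the paper: reduce to the symmetric group case via Proposition \ref{prop-unirep-to-irrdrep}, pass to Schur functions via Proposition \ref{prop-il-to-sp}, and invoke Corollary \ref{cor-sp-hook-schur} with the appropriate hook parameters (your relabeling $C_k = V_{(m+k+1,1^{d-k-1})}$ is just a reindexing of the paper's coefficients). The parameter checks $1\le a\le b$, $1\le p\le q$ and the trivial boundary case are handled correctly, so there is no gap.
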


\begin{proof}
By Proposition \ref{prop-unirep-to-irrdrep}, it is sufficient to prove the strongly induced log-concavity of $\widetilde{H}_{U_{m,d}}^{\mathfrak{S}_{m+d}}(t)$.
Since the image of $\widetilde{H}_{U_{m,d}}^{\mathfrak{S}_{m+d}}(t)$ under the Frobenius map is
    \begin{align*}
        \ch\,\widetilde{H}_{U_{m,d}}^{\mathfrak{S}_{m+d}}(t)
        =\sum_{i=1}^{d}(-1)^{i-1}s_{(m+d-i+1,1^{i-1})}t^{d-i},
    \end{align*}
    by Proposition \ref{prop-il-to-sp}
    it suffices to show that for any $1\leq i\leq j\le d-2$
    \begin{align*}
        s_{(m+d-i,1^{i})}s_{(m+d-j,1^{j})}\ge_s s_{(m+d-i+1,1^{i-1})}s_{(m+d-j-1,1^{j+1})}.
    \end{align*}
    Taking $a=i,b=j$ and $q=m+d-i,p=m+d-j$ in Corollary \ref{cor-sp-hook-schur} leads to the desired Schur positivity. This completes the proof.
\end{proof}

Now we come to the main result of this section.

    \begin{thm}\label{thm-chpol-sindlog-q}
        For any $d\ge 1$ and $m\ge 0$, both ${H}_{U_{m,d}(q)}^{\GL_{m+d}(\mathbb{F}_q)}(t)$ and ${H}_{U_{m,d}}^{\mathfrak{S}_{m+d}}(t)$ are strongly inductively log-concave.
    \end{thm}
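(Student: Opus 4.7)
The plan is to derive Theorem~\ref{thm-chpol-sindlog-q} directly from Theorem~\ref{thm-charpol-indlog-q} via Proposition~\ref{prop-times (t+1)}. The key observation is that, once one discards the alternating signs as the paper's convention requires, the polynomial $H^{W}_M(t)$ is simply $(t+\tau)\cdot \widetilde{H}^{W}_M(t)$ (not $(t-\tau)\cdot \widetilde{H}^{W}_M(t)$ — the sign flip is absorbed into the convention of stripping signs from the coefficient sequence). I would verify the identity carefully in the symmetric group case $\mathfrak{S}_{m+d}\curvearrowright U_{m,d}$; the $q$-niform case is formally the same, since \eqref{eq-char-pol-qniform} is obtained from \eqref{eq-char-pol-umd} by the substitution $V_\lambda\mapsto V_\lambda(q)$, and the relevant triple $(\GL_{2(m+d)}(\mathbb{F}_q),P_{m+d,2(m+d)}(\mathbb{F}_q),\iota)$ has property $(\diamondsuit)$ by Remark~\ref{rem-property}. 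Alternatively, once the symmetric-group statement is proved, Proposition~\ref{prop-unirep-to-irrdrep} transfers it verbatim to the finite general linear group setting.

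For the verification, write $\widetilde{c}_k := V_{(m+k+1,\,1^{d-k-1})}$ for $0\le k\le d-1$, so that the unsigned version of $\widetilde{H}^{\mathfrak{S}_{m+d}}_{U_{m,d}}(t)$ is $\widetilde{J}(t)=\sum_{k=0}^{d-1}\widetilde{c}_k t^k$. Let $c_k$ denote the unsigned coefficient of $t^k$ in $H^{\mathfrak{S}_{m+d}}_{U_{m,d}}(t)$. Reading off \eqref{eq-char-pol-umd} gives $c_0=V_{(m+1,1^{d-1})}$, $c_d=V_{(m+d)}$, and $c_k=V_{(m+k,1^{d-k})}+V_{(m+k+1,1^{d-k-1})}$ for $1\le k\le d-1$. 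Under the convention $\widetilde{c}_{-1}=\widetilde{c}_d=0$, these are precisely $c_k=\widetilde{c}_{k-1}+\widetilde{c}_k$, which is the coefficient expansion of $(t+\tau)\cdot\widetilde{J}(t)$. Hence the unsigned version of $H^{\mathfrak{S}_{m+d}}_{U_{m,d}}(t)$ equals $(t+\tau)\cdot\widetilde{J}(t)$.

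Finally, by Theorem~\ref{thm-charpol-indlog-q} the polynomial $\widetilde{J}(t)$ is strongly inductively log-concave with respect to the triple $(\mathfrak{S}_{2(m+d)},\mathfrak{S}_{m+d}\times\mathfrak{S}_{m+d},\mathrm{id})$, and its coefficients $\widetilde{c}_k$ are irreducible (hence honest) representations of $\mathfrak{S}_{m+d}$. By Remark~\ref{rem-property} the triple in question has property $(\diamondsuit)$, so Proposition~\ref{prop-times (t+1)} applies and delivers the strong inductive log-concavity of $(t+\tau)\cdot\widetilde{J}(t)$, which is exactly Theorem~\ref{thm-chpol-sindlog-q} for $\mathfrak{S}_{m+d}\curvearrowright U_{m,d}$. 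The $q$-niform case is handled either by the same argument with the $\GL$-triple, or by invoking Proposition~\ref{prop-unirep-to-irrdrep}. There is no real obstacle in this proof; the only point that requires care is the bookkeeping of alternating signs at the two boundary coefficients $k=0$ and $k=d$, which is what makes multiplication by $(t-\tau)$ on signed coefficients turn into multiplication by $(t+\tau)$ on unsigned ones.
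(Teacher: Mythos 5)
Your proposal is correct and follows essentially the same route as the paper: both reduce the statement to Theorem~\ref{thm-charpol-indlog-q} plus Proposition~\ref{prop-times (t+1)} (with property $(\diamondsuit)$ from Remark~\ref{rem-property}), the only difference being that the paper handles the sign bookkeeping via the substitution $t\mapsto -t$ in \eqref{eq-equivariant-red-char-pol}, i.e.\ ${H}_{U_{m,d}}^{\mathfrak{S}_{m+d}}(-t)=-(t+\tau)\widetilde{H}_{U_{m,d}}^{\mathfrak{S}_{m+d}}(-t)$, whereas you verify the same unsigned coefficient identity $c_k=\widetilde{c}_{k-1}+\widetilde{c}_k$ directly from \eqref{eq-char-pol-umd}. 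Your coefficient check and the transfer to the $q$-niform case via Proposition~\ref{prop-unirep-to-irrdrep} are both accurate.
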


\begin{proof}
Note that
\begin{align*}
{H}_{U_{m,d}}^{\mathfrak{S}_{m+d}}(-t)=-(t+\tau)\widetilde{H}_{U_{m,d}}^{\mathfrak{S}_{m+d}}(-t).
\end{align*}
Similarly,
\begin{align*}
{H}_{U_{m,d}(q)}^{\GL_{m+d}(\mathbb{F}_q)}(-t)=-(t+\tau)
\widetilde{H}_{U_{m,d}(q)}^{\GL_{m+d}(\mathbb{F}_q)}(-t).
\end{align*}
By combining Theorem \ref{thm-charpol-indlog-q} and Proposition \ref{prop-times (t+1)} (together with Remark \ref{rem-property}), we complete the proof.
\end{proof}

As an immediate consequence of the above result, we obtain the log-concavity of $\chi_{U_{m,d}}$ and $\chi_{U_{m,d}(q)}$. Note that the log-concavity of $\chi_M(t)$ for any matroid $M$ was proved by Adiprasito, Huh, and Katz \cite{AHK-2018}.

\subsection{Equivariant Kazhdan-Lusztig polynomials}\label{subsect-4.2}
The aim of this subsection is to prove the inductively log-concavity of equivariant Kazhdan-Lusztig polynomials of $\mathfrak{S}_{m+d} \curvearrowright U_{m,d}$
and $\GL_{m+d}(\mathbb{F}_q)\curvearrowright U_{m,d}(q)$, which implies the log-concavity of the corresponding Kazhdan-Lusztig polynomials.

First let us recall the definition of matroid Kazhdan-Lusztig polynomial.
Elias, Proudfoot and Wakefield \cite[Theorem 2.2]{EPW-2016} showed that
there exists a unique way to assign each matroid $M$ a polynomial $P_M(t)\in\mathbb{Z}[t]$ subject to the following three conditions:
    \begin{itemize}
        \item If the ground set $E$ of $M$ is empty, then $P_M(t)=1$;
        \item If $\rk M>0$, then the degree of $P_M(t)$ is strictly less than $\rk M/2$;
        \item For every matroid $M$, we have
        $$t^{\rk M}P_M(t^{-1})=\sum_{F\in\mathcal{L}(M)}\chi_{M^F}(t)P_{M_F}(t).$$
    \end{itemize}
The above polynomial $P_M(t)$ is called the Kazhdan-Lusztig polynomial of $M$.
As pointed out in \cite[Remark 2.2]{KNPV-2022}, with the above definition of
$P_M(t)$ one can show that $P_M(t)=0$ if $M$ has a loop, while the original definition in \cite[Theorem 2.2]{EPW-2016} applies only to loopless matroids.

Gedeon, Proudfoot and Young \cite{GPY-2017} proved that for any equivariant matroid $W\curvearrowright M$  there is a unique way to define
the equivariant Kazhdan-Lusztig polynomial $P_{M}^W(t)\in \VRep(W)[t]$, such that the following conditions are satisfied:
\begin{itemize}
    \item If the ground set $E$ is empty, then $P^W_M(t)$ is the trivial representation in degree $0$;
    \item If $\rk M > 0$, then $\deg P^W_M(t) < \tfrac{1}{2}\rk M$;
    \item For every $M$,
    $$\displaystyle t^{\rk M} P^W_M(t^{-1}) = \sum_{[F]\in \mathcal{L}(M)/W}    \Ind_{W_F}^W\left(H_{M^F}^{W_F}(t) \otimes P^{W_F}_{M_F}(t)\right),$$
    where $W_F\subseteq W$ is the stabilizer of $F$ and $\mathcal{L}(M)/W$ denotes the set of orbits of the natural action of $W$ on $\mathcal{L}(M)$.
\end{itemize}
Here we adopt the definition of the equivariant Kazhdan-Lusztig polynomial presented in  \cite{GXY-2021}.


As pointed out by Gedeon, Proudfoot and Young in their paper \cite{GPY-2017},
the equivariant Kazhdan-Lusztig polynomials provide more powerful tools to study the
ordinary Kazhdan-Lusztig polynomials. In fact, if $W$ is a trivial group, then $P_M^W(t)$ is just $P_M(t)$. Moreover, we can also recover $P_M(t)$ by taking dimension to the coefficients of $P_M^W(t)$.
Since the dimension of a honest representation is a positive number, we can deduce the positivity of the coefficients of ordinary Kazhdan-Lusztig polynomials from that of the corresponding equivariant Kazhdan-Lusztig polynomials by sending a honset representation to its dimension.

We proceed to prove the induced log-concavity of $P_{U_{m,d}}^{\mathfrak{S}_{m+d}}(t)$ and $P^{\GL_{m+d}(\mathbb{F}_q)}_{U_{m,d}(q)}(t)$, which implies the log-concavity of $P_{U_{m,d}}(t)$ and $P_{U_{m,d}(q)}(t)$.
To this end, we need the following result recently established by Gao, Xie, and Yang \cite{GXY-2021}.

\begin{thm}[{\cite[Theorem 3.7]{GXY-2021}}]\label{main-equi-schur-skew}
    For any $d\geq 1$ and $m \geq 0$, we have
    \begin{align}\label{eq-skew-version}
        P_{U_{m,d}}^{\mathfrak{S}_{m+d}}(t)=\sum_{i=0}^{\lfloor (d-1)/2 \rfloor}
        V_{(m+d-2i,{(d-2i+1)}^i) / ( {(d-2i-1)}^i)} t^i.
    \end{align}
\end{thm}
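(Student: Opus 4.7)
The plan is to argue by strong induction on the rank $d$, invoking the three characterizing conditions of $P_M^W(t)$. The base case $d=1$ is immediate, since $P_{U_{m,1}}^{\mathfrak{S}_{m+1}}(t) = V_{(m+1)}$ agrees with the $i=0$ term of the formula. For the inductive step, first classify the $\mathfrak{S}_{m+d}$-orbits on $\mathcal{L}(U_{m,d})$: aside from the top flat, the orbits are indexed by flat sizes $k\in\{0,1,\ldots,d-1\}$, and a representative flat $F$ of size $k$ has stabilizer $W_F=\mathfrak{S}_k\times\mathfrak{S}_{m+d-k}$, localization $M^F\cong U_{0,k}$, and contraction $M_F\cong U_{m,d-k}$. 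Substituting into the defining recursion expresses $t^d P_{U_{m,d}}^{\mathfrak{S}_{m+d}}(t^{-1})$ as an explicit sum involving $H_{U_{m,d}}^{\mathfrak{S}_{m+d}}(t)$ together with induction products of the form $H_{U_{0,k}}^{\mathfrak{S}_k}(t) \ast P_{U_{m,d-k}}^{\mathfrak{S}_{m+d-k}}(t)$, where $\ast$ is the induction product of Section~\ref{sect-2}.

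Next, I would apply the Frobenius characteristic map and pass entirely to $\Lambda_{\mathbb{Z}}$, where $\ast$ becomes ordinary multiplication by Proposition~\ref{prop-ch}. Formula \eqref{eq-char-pol-umd} identifies $\mathrm{ch}\,H_{U_{m,d}}^{\mathfrak{S}_{m+d}}(t)$ explicitly as a combination of hook Schur functions, while the inductive hypothesis supplies each $\mathrm{ch}\,P_{U_{m,d-k}}^{\mathfrak{S}_{m+d-k}}(t)$ as a sum of skew Schur functions. On the conjectured side, the Jacobi-Trudi identity \eqref{eq-jt-identity} rewrites the coefficient of $t^i$, namely $s_{(m+d-2i,(d-2i+1)^i)/((d-2i-1)^i)}$, as an $(i+1)\times(i+1)$ determinant of complete homogeneous symmetric functions $h_k$. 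By the uniqueness part of the characterization of $P_M^W(t)$, it is enough to match the coefficient of $t^i$ on both sides for $0\le i\le\lfloor(d-1)/2\rfloor$ together with the degree bound $\deg P_{U_{m,d}}^{\mathfrak{S}_{m+d}}<d/2$.

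The principal obstacle is the combinatorial matching in the last step: the orbit sum becomes a sum of products in the $h_k$ alphabet, while the Jacobi-Trudi determinant provides a signed expansion that must be reconciled with this sum. I would handle this by Laplace cofactor expansion of the determinant along its first row, combined with Pieri's rule (which absorbs the factors $s_{(k)}=h_k$ appearing in $\mathrm{ch}\,H_{U_{0,k}}^{\mathfrak{S}_k}$) and the classical branching recursion for skew Schur functions under horizontal-strip removal from the outer shape. An attractive alternative would be to bypass the induction altogether by invoking the Gedeon-Proudfoot-Young formula expressing $P_{U_{m,d}}^{\mathfrak{S}_{m+d}}(t)$ as an explicit combination of ordinary Specht modules, and then proving, via Jacobi-Trudi applied to the staircase skew shape $(m+d-2i,(d-2i+1)^i)/((d-2i-1)^i)$, that this combination equals the corresponding skew Schur function at each degree~$i$; this route reduces the theorem to a purely symmetric-function identity free of representation-theoretic overhead.
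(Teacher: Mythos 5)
First, a point of comparison: the paper does not prove this statement at all — it is imported verbatim from \cite[Theorem 3.7]{GXY-2021} — so there is no internal proof to measure your argument against. Judged on its own, your outline sets up the right framework: the orbit analysis of $\mathcal{L}(U_{m,d})$ is correct (a size-$k$ flat, $0\le k\le d-1$, has stabilizer $\mathfrak{S}_k\times\mathfrak{S}_{m+d-k}$, localization $U_{0,k}$, contraction $U_{m,d-k}$), the induction product $\ast$ is the right way to package the induced terms, the Frobenius map turns everything into symmetric functions, and the degree bound $\lfloor (d-1)/2\rfloor<d/2$ together with KLS uniqueness is the correct mechanism for concluding.

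The genuine gap is that the entire content of the theorem sits in the step you defer. To run your primary route you must actually verify that the proposed polynomial satisfies $t^{d}P(t^{-1})=\mathrm{ch}\,H_{U_{m,d}}^{\mathfrak{S}_{m+d}}(t)+\sum_{k}\mathrm{ch}\,H_{U_{0,k}}^{\mathfrak{S}_k}(t)\cdot\mathrm{ch}\,P_{U_{m,d-k}}^{\mathfrak{S}_{m+d-k}}(t)$ with the claimed skew Schur coefficients inserted on both sides; saying that Laplace expansion of the Jacobi--Trudi determinant ``combined with Pieri's rule and horizontal-strip branching'' will reconcile the two sides is a plan, not a proof, and this is precisely the multi-page symmetric-function cancellation that constitutes the known proofs. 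Moreover, your parenthetical that Pieri's rule absorbs ``the factors $s_{(k)}=h_k$ appearing in $\mathrm{ch}\,H_{U_{0,k}}^{\mathfrak{S}_k}$'' is wrong as stated: by \eqref{eq-char-pol-umd} with $m=0$ the coefficients of $H_{U_{0,k}}^{\mathfrak{S}_k}(t)$ are sums of hooks $s_{(k-i,1^i)}+s_{(k-i+1,1^{i-1})}$, and only the leading coefficient is $h_k$, so the bookkeeping cannot be reduced to a chain of Pieri multiplications. Your ``attractive alternative'' is in fact the tractable and historically accurate route: start from the known expansion of the coefficient of $t^i$ into irreducibles $\sum_{b=1}^{\min\{m,d-2i\}}V_{(m+d-2i-b+1,\,b+1,\,2^{i-1})}$ (the symmetric-group analogue of \eqref{eq-equivariant-kl-qniform}) and prove, by Littlewood--Richardson or a Jacobi--Trudi computation on the shape $(m+d-2i,(d-2i+1)^i)/((d-2i-1)^i)$, that this sum of Schur functions equals the corresponding skew Schur function — but in your write-up that identity, too, is only named and never established, and it would still rest on the nontrivial input that the irreducible-decomposition formula itself has been proved.
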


Proudfoot \cite{P-2019}
studied the equivariant Kazhdan-Lusztig polynomial $P^{\GL_{m+d}(\mathbb{F}_q)}_{U_{m,d}(q)}(t)$ and obtained that
\begin{align}
    P^{\GL_{m+d}(\mathbb{F}_q)}_{U_{m,d}(q)}(t)=
    \sum_{i=0}^{\lfloor (d-1)/2 \rfloor}\left(\sum_{b=1}^{\min\{m,d-2i\}} V_{\lambda}(q)\right)t^i, \label{eq-equivariant-kl-qniform}
\end{align}
where $\lambda=(m+d-2i-b+1,b+1,2^{i-1})$ is a partition of $m+d$ and
$V_\lambda(q)$ denotes the associated irreducible unipotent representation of $\GL_{m+d}(\mathbb{F}_q)$.
By the Comparison theorem and \eqref{eq-skew-version}, one can rewrite \eqref{eq-equivariant-kl-qniform} as
\begin{align}\label{eq-equivariant-kl-qniform-skew}
    P^{\GL_{m+d}(\mathbb{F}_q)}_{U_{m,d}(q)}(t)
=\sum_{i=0}^{\lfloor (d-1)/2 \rfloor}
        V_{(m+d-2i,{(d-2i+1)}^i) / ( {(d-2i-1)}^i)}(q) t^i,
\end{align}
where
$$
V_{(m+d-2i,{(d-2i+1)}^i) / ( {(d-2i-1)}^i)}(q)=\sum_{b=1}^{\min\{m,d-2i\}} V_{\lambda}(q).
$$
By taking dimensions of the representations in \eqref{eq-equivariant-kl-qniform-skew},
one can directly obtain that
\begin{align*}
    P_{U_{m,d}(q)}(t)=\sum_{i=0}^{\lfloor\frac{d}{2}\rfloor} c_{m,d}^i (q)t^i, \qquad
    c_{m,d}^i(q)=\dim V_{(m+d-2i,{(d-2i+1)}^i) / ( {(d-2i-1)}^i)}(q).
\end{align*}

The main result of this subsection is as follows.

\begin{thm}\label{thm-KL-qumd}
    For any prime power $q$, and any nonnegative integers $m\geq 0$ and $d\geq 1$, the equivariant Kazhdan-Lusztig polynomials $P_{U_{m,d}}^{\mathfrak{S}_{m+d}}(t)$ and $P^{\GL_{m+d}(\mathbb{F}_q)}_{U_{m,d}(q)}(t)$ are inductively log-concave.
\end{thm}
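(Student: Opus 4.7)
The plan is to reduce both statements to a single Schur positivity assertion that has already been established in the paper, namely Theorem \ref{thm-Schur-main}. First, comparing the formula \eqref{eq-skew-version} for $P^{\mathfrak{S}_{m+d}}_{U_{m,d}}(t)$ with the formula \eqref{eq-equivariant-kl-qniform-skew} for $P^{\GL_{m+d}(\mathbb{F}_q)}_{U_{m,d}(q)}(t)$, one sees that the two polynomials correspond coefficient by coefficient under the canonical bijection $\psi$ of Section \ref{sect-2}. Hence, by Proposition \ref{prop-unirep-to-irrdrep}, the induced log-concavity of $P^{\GL_{m+d}(\mathbb{F}_q)}_{U_{m,d}(q)}(t)$ with respect to $(\GL_{2(m+d)}(\mathbb{F}_q), P_{m+d,2(m+d)}(\mathbb{F}_q), \iota)$ is equivalent to the induced log-concavity of $P^{\mathfrak{S}_{m+d}}_{U_{m,d}}(t)$ with respect to $(\mathfrak{S}_{2(m+d)}, \mathfrak{S}_{m+d}\times \mathfrak{S}_{m+d}, \mathrm{id})$, so it suffices to prove the latter.

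By Theorem \ref{main-equi-schur-skew}, the coefficient of $t^{i}$ in $P^{\mathfrak{S}_{m+d}}_{U_{m,d}}(t)$ is the skew Specht module $V_{\lambda^{(i)}/\mu^{(i)}}$, where
\[
\lambda^{(i)}/\mu^{(i)} \;=\; \bigl(m+d-2i,\,(d-2i+1)^{i}\bigr)\big/\bigl((d-2i-1)^{i}\bigr), \qquad 0\le i\le \lfloor (d-1)/2\rfloor.
\]
Passing to symmetric functions via the Frobenius characteristic map (Proposition \ref{prop-ch}), and then applying Proposition \ref{prop-il-to-sp}, the required induced log-concavity becomes the Schur positivity
\[
s^{2}_{\lambda^{(i)}/\mu^{(i)}} \;-\; s_{\lambda^{(i-1)}/\mu^{(i-1)}}\, s_{\lambda^{(i+1)}/\mu^{(i+1)}} \;\ge_{s}\; 0 \qquad \text{for each } i\ge 1,
\]
with the convention $s_{\lambda^{(i+1)}/\mu^{(i+1)}}=0$ whenever $i+1>\lfloor (d-1)/2\rfloor$.

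The key observation is that the family $\{\lambda^{(i)}/\mu^{(i)}\}$ is exactly the one appearing in Theorem \ref{thm-Schur-main}, specialised at $n=m+d$, $k=-2$, $a=d+1$, $b=d-1$, $c=1$, and the last parameter (also called $d$ in the statement of Theorem \ref{thm-Schur-main}) set to $0$. Indeed, one checks directly that $n+ki=m+d-2i$, $a+ki=d-2i+1$, $b+ki=d-2i-1$ and $ci+0=i$. Consequently, for $1\le i\le \lfloor (d-1)/2\rfloor-1$, where $\lambda^{(i-1)}/\mu^{(i-1)}$, $\lambda^{(i)}/\mu^{(i)}$ and $\lambda^{(i+1)}/\mu^{(i+1)}$ are all genuine skew shapes, Theorem \ref{thm-Schur-main} supplies precisely the required inequality. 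In the boundary case $i=\lfloor (d-1)/2\rfloor$, the factor $V_{\lambda^{(i+1)}/\mu^{(i+1)}}$ vanishes, so the containment collapses to $V_{\lambda^{(i)}/\mu^{(i)}}\ast V_{\lambda^{(i)}/\mu^{(i)}}\supseteq 0$, which is automatic.

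Since the substantive combinatorial content has already been absorbed into Theorem \ref{thm-Schur-main} (whose proof combined the $180^{\circ}$ rotation identity of Proposition \ref{thm-rotate-180} with Theorems \ref{thm-LPP-main} and \ref{thm-LPP-sort} of Lam--Postnikov--Pylyavskyy), I do not expect a serious obstacle; the only genuine step is the parameter matching and the initial reduction via $\psi$. The main conceptual point worth flagging is that it is Theorem \ref{main-equi-schur-skew} of Gao--Xie--Yang which presents the coefficients as a single skew Specht module rather than a sum of honest Specht modules, and it is precisely this uniform skew shape description that allows Theorem \ref{thm-Schur-main} to be applied in one stroke; without it, one would be forced into much finer bookkeeping.
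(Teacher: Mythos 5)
Your proposal is correct and follows essentially the same route as the paper: reduce to the symmetric group case via Proposition \ref{prop-unirep-to-irrdrep}, pass to Schur functions through the Frobenius characteristic map and Proposition \ref{prop-il-to-sp}, and invoke Theorem \ref{thm-Schur-main} with exactly the specialization $n=m+d$, $k=-2$, $a=d+1$, $b=d-1$, $c=1$ and last parameter $0$. Your explicit treatment of the boundary index $i=\lfloor (d-1)/2\rfloor$ is a minor addition the paper leaves implicit, but it changes nothing of substance.
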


\begin{proof}
    According to Proposition \ref{prop-unirep-to-irrdrep}, it is sufficient to prove the induced log-concavity of $P_{U_{m,d}}^{\mathfrak{S}_{m+d}}(t)$.
    Since the image of $P_{U_{m,d}}^{\mathfrak{S}_{m+d}}(t)$ under the Frobenius map is
        \begin{align*}
            \ch\,P_{U_{m,d}}^{\mathfrak{S}_{m+d}}(t)
            =\sum_{i=0}^{\lfloor (d-1)/2 \rfloor}s_{(m+d-2i,(d-2i+1)^i) / ((d-2i-1)^i)}t^{i},
        \end{align*}
    by Proposition \ref{prop-il-to-sp} it suffices to show that for any $1\le i\le \lfloor (d-1)/2 \rfloor-1$ there holds
    $$([t^i](\ch\,P_{U_{m,d}}^{\mathfrak{S}_{m+d}}(t)))^2\geq_s
    ([t^{i+1}](\ch\,P_{U_{m,d}}^{\mathfrak{S}_{m+d}}(t)))\cdot ([t^{i-1}](\ch\,P_{U_{m,d}}^{\mathfrak{S}_{m+d}}(t))),$$
    namely,
    \begin{align*}
        s^2_{(m+d-2i,(d-2i+1)^i) / ((d-2i-1)^i)}
    &\ge_{s}s_{(m+d-2(i+1),(d-2(i+1)+1)^{(i+1)}) / ((d-2(i+1)-1)^{(i+1)})}\\
    &\qquad \times s_{(m+d-2{(i-1)},(d-2(i-1)+1)^{(i-1)}) / ((d-2{(i-1)}-1)^{(i-1)})},
    \end{align*}
    which follows from Theorem \ref{thm-Schur-main} by taking $n=m+d,\,k=-2,\,a=d+1,\,b=d-1,\,c=1,\,d=0$.
    So the proof is complete.
\end{proof}

By taking dimensions of representations in Theorem \ref{thm-KL-qumd}, we obtain the log-concavity of $P_{U_{m,d}(q)}(t)$ and recover the log-concavity of $P_{U_{m,d}}(t)$.
This provides further evidence for Elias, Proudfoot and Wakefield's log-concavity conjecture on matroid Kazhdan-Lusztig polynomials \cite[Conjecture 2.5]{EPW-2016}.

Motivated by Theorem \ref{thm-KL-qumd}, we have the following conjecture, which has been verified for $1\le m,d\le 15$ by using SageMath \cite{SAGE}.

\begin{conj}\label{conj-sil-klpol}
    For any prime power $q$, and any nonnegative integers $m\geq 0$ and $d\geq 1$, both $P_{U_{m,d}}^{\mathfrak{S}_{m+d}}(t)$ and $P^{\GL_{m+d}(\mathbb{F}_q)}_{U_{m,d}(q)}(t)$ are strongly inductively log-concave.
\end{conj}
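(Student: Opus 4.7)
The plan is to reduce Conjecture \ref{conj-sil-klpol} to a Schur positivity statement and then to generalize Theorem \ref{thm-Schur-main} to the ``off-diagonal'' case $i\le j$. By Proposition \ref{prop-unirep-to-irrdrep} it suffices to prove strong inductive log-concavity of $P^{\mathfrak{S}_{m+d}}_{U_{m,d}}(t)$. Combining Proposition \ref{prop-il-to-sp} with the skew-Schur form \eqref{eq-skew-version} from Theorem \ref{main-equi-schur-skew} and the $180$-degree rotation of Proposition \ref{thm-rotate-180}, the conjecture reduces to proving
\begin{align*}
s_{\tilde\lambda^{(i)}/\tilde\mu^{(i)}}\, s_{\tilde\lambda^{(j)}/\tilde\mu^{(j)}}\ \ge_s\ s_{\tilde\lambda^{(i-1)}/\tilde\mu^{(i-1)}}\, s_{\tilde\lambda^{(j+1)}/\tilde\mu^{(j+1)}}
\end{align*}
for all $1\le i\le j\le\lfloor(d-1)/2\rfloor-1$, where $\tilde\lambda^{(k)}/\tilde\mu^{(k)}:=(m+d-2k,(m+1)^k)/((m-1)^k)$.

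My first attempt would mimic the two-step proof of Theorem \ref{thm-Schur-main}. Applying Theorem \ref{thm-LPP-sort} to the pair $(\tilde\lambda^{(j+1)}/\tilde\mu^{(j+1)},\tilde\lambda^{(i-1)}/\tilde\mu^{(i-1)})$ produces an intermediate pair
\begin{align*}
A_1/B_1\ &=\ (m+d-2(i-1),\,(m+1)^{\lceil(i+j)/2\rceil})\,/\,((m-1)^{\lceil(i+j)/2\rceil}),\\
A_2/B_2\ &=\ (m+d-2(j+1),\,(m+1)^{\lfloor(i+j)/2\rfloor})\,/\,((m-1)^{\lfloor(i+j)/2\rfloor})
\end{align*}
satisfying $s_{A_1/B_1}s_{A_2/B_2}\ge_s s_{\tilde\lambda^{(j+1)}/\tilde\mu^{(j+1)}}s_{\tilde\lambda^{(i-1)}/\tilde\mu^{(i-1)}}$. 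The remaining step would be to establish $s_{\tilde\lambda^{(i)}/\tilde\mu^{(i)}}s_{\tilde\lambda^{(j)}/\tilde\mu^{(j)}}\ge_s s_{A_1/B_1}s_{A_2/B_2}$. When $j=i$ the two shapes of the intermediate pair already carry the correct domino count $i$, and their first rows differ from that of $\tilde\lambda^{(i)}$ by $\pm 2$, so a single application of Theorem \ref{thm-LPP-main} to $(A_1/B_1,A_2/B_2)$ averages them back to $\tilde\lambda^{(i)}/\tilde\mu^{(i)}$ and closes the argument; this recovers Theorem \ref{thm-Schur-main}. When $j>i$, however, the intermediate pair carries the \emph{averaged} domino counts $\lceil(i+j)/2\rceil$ and $\lfloor(i+j)/2\rfloor$ instead of the target unequal counts $(i,j)$, so an additional Schur-positive ``domino-transfer'' step is needed.

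The main obstacle is this domino-transfer step: neither Theorem \ref{thm-LPP-sort} nor Theorem \ref{thm-LPP-main} moves dominoes between two skew shapes in a Schur-positive way, and a naive induction on the width $w=j-i$ fails because Schur positivity lacks a cancellation property---combining the width-$(w-1)$ inequalities $s_{\tilde\lambda^{(i)}}s_{\tilde\lambda^{(j-1)}}\ge_s s_{\tilde\lambda^{(i-1)}}s_{\tilde\lambda^{(j)}}$ and $s_{\tilde\lambda^{(i+1)}}s_{\tilde\lambda^{(j)}}\ge_s s_{\tilde\lambda^{(i)}}s_{\tilde\lambda^{(j+1)}}$ yields only a three-fold inequality whose common factor $s_{\tilde\lambda^{(i)}}$ cannot be canceled. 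A promising alternative is to use the straight-shape expansion
\begin{align*}
s_{\tilde\lambda^{(k)}/\tilde\mu^{(k)}}\ =\ \sum_{b=1}^{\min\{m,\,d-2k\}} s_{(m+d-2k-b+1,\,b+1,\,2^{k-1})}
\end{align*}
implicit in \eqref{eq-equivariant-kl-qniform-skew} and to prove the pointwise Schur positivity of the resulting difference of double sums, which amounts to Schur positivity of differences of products of ``augmented hook'' Schur functions $s_{(A,B,2^c)}\cdot s_{(A',B',2^{c'})}$. These shapes strictly generalize the hooks treated in Lemma \ref{lem-prod-hook-schur} and Corollary \ref{cor-sp-hook-schur}, and an explicit Littlewood-Richardson expansion of their product should be feasible through a case analysis of the positions of $1$'s and $2$'s in the Littlewood-Richardson tableaux; the key technical challenge will then be to construct a combinatorial injection pairing the Littlewood-Richardson terms on the two sides while simultaneously adjusting the summation indices $(b_1,b_2)$ in a manner uniform in $i$ and $j$.
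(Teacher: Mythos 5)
The statement you are trying to prove is not proved in the paper at all: it is stated as Conjecture \ref{conj-sil-klpol}, supported only by SageMath verification for $1\le m,d\le 15$. So the relevant question is whether your proposal closes it, and it does not. Your reductions are correct and match the paper's strategy for the non-strong case (Theorem \ref{thm-KL-qumd}): Proposition \ref{prop-unirep-to-irrdrep} reduces the $\GL_{m+d}(\mathbb{F}_q)$ statement to the $\mathfrak{S}_{m+d}$ one, Proposition \ref{prop-il-to-sp} together with \eqref{eq-skew-version} and the rotation of Proposition \ref{thm-rotate-180} reduces it to the inequality $s_{\tilde\lambda^{(i)}/\tilde\mu^{(i)}}s_{\tilde\lambda^{(j)}/\tilde\mu^{(j)}}\ge_s s_{\tilde\lambda^{(i-1)}/\tilde\mu^{(i-1)}}s_{\tilde\lambda^{(j+1)}/\tilde\mu^{(j+1)}}$ with $\tilde\lambda^{(k)}/\tilde\mu^{(k)}=(m+d-2k,(m+1)^k)/((m-1)^k)$, and your computation of the intermediate pair produced by Theorem \ref{thm-LPP-sort} is accurate. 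But the heart of the matter --- what you call the domino-transfer step, namely getting from the averaged row-counts $\lceil(i+j)/2\rceil,\lfloor(i+j)/2\rfloor$ back to the unequal counts $i,j$ in a Schur-positive way --- is precisely the part that neither Theorem \ref{thm-LPP-main} nor Theorem \ref{thm-LPP-sort} provides (both only average or sort, they never ``de-average'' toward a prescribed unbalanced pair), and you leave it open. Your own diagnosis of why induction on $j-i$ fails (Schur positivity admits no cancellation of a common factor) is correct, which makes the gap genuine rather than a routine omission.

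The alternative route you sketch --- expanding each coefficient via the straight-shape decomposition $s_{\tilde\lambda^{(k)}/\tilde\mu^{(k)}}=\sum_{b}s_{(m+d-2k-b+1,\,b+1,\,2^{k-1})}$ from \eqref{eq-equivariant-kl-qniform-skew} and proving positivity of the resulting difference of double sums by a Littlewood--Richardson analysis of products of shapes $(A,B,2^c)$ --- is a reasonable research direction, but as written it is only a program: no expansion analogous to Lemma \ref{lem-prod-hook-schur} is carried out for these three-or-more-row shapes, and no injection between LR tableaux on the two sides is constructed; note also that a termwise comparison for fixed $(b_1,b_2)$ is unlikely to work uniformly, since the needed positivity may hold only after summing over $b_1,b_2$, which is exactly the kind of global cancellation-free argument that is hard to produce. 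In short: your proposal correctly reformulates the conjecture as a concrete Schur-positivity problem (which is valuable and consistent with the paper's framework), but it does not prove it, and the statement remains open.
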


\subsection{Equivariant inverse Kazhdan-Lusztig polynomials}\label{subsect-4.3}

The main objective of this subsection is to prove the induced log-concavity of
equivariant inverse Kazhdan-Lusztig polynomials of uniform matroids and \q-niform matroids.

Recall that the inverse Kazhdan-Lusztig polynomial of a matroid $M$, denoted by $Q_M(t)$, was introduced by Gao and Xie \cite{GX-2021} from the viewpoint of incidence algebras.
Proudfoot \cite{P-2021} noted that the equivariant inverse Kazhdan-Lusztig polynomial of an equivariant matroid $W\curvearrowright M$, denoted by $Q_M^W(t)$,
can be similarly defined by applying the equivariant Kazhdan-Lusztig-Stanley theory. Following Braden, Huh, Matherne, Proudfoot and Wang \cite{BHMPW-2020}, the polynomial $Q_M^W(t)$ can be defined inductively as follows:
\begin{itemize}
\item If the ground set of $M$ is empty, then  $Q_M^W(t)$ is the trivial representation in degree $0$;

\item Otherwise we have
    \begin{align}\label{eq-equivariant-inverse-klpol}
        Q_M^W(t)=-\sum_{\emptyset\neq F\in\mathcal{L}(M)}(-1)^{\rk F}\frac{|W_F|}{|W|}\Ind_{W_F}^{W}\left(P_{M^F}^{W_F}(t)\otimes Q_{M_F}^{W_F}(t)\right).
    \end{align}
\end{itemize}
Let $C_{M,W}^i$ be the coefficient of $t^i$ in $P_{M}^W(t)$ and $D_{M,W}^i$ be the coefficient of $t^i$ in $Q_{M}^W(t)$. Taking the coefficients of $t^i$ in \eqref{eq-equivariant-inverse-klpol} leads to
\begin{align}\label{coeff-inverse-and-kl}
    D_{M,W}^i=-\sum_{\emptyset\neq F\in\mathcal{L}(M)\atop 0\le j\le \rk F}(-1)^{\rk F}\frac{|W_F|}{|W|}\Ind_{W_F}^{W}\left(C_{M^F,W_F}^j\otimes D_{M_F,W_F}^{i-j}\right).
\end{align}

The equivariant inverse Kazhdan-Lusztig polynomials of uniform matroids were studied by
Gao, Xie, and Yang \cite{GXY-2021}, who obtained the following result.

\begin{thm}[{\cite[Theorem 3.2]{GXY-2021}}]\label{thm-inver-equivariant-umd}
    For any integers $m\ge 0$ and $d\ge 1$, we have
    \begin{equation}\label{def-eikl}
        Q_{U_{m,d}}^{\mathfrak{S}_{m+d}}(t)=\sum_{i=0}^{\lfloor (d-1)/2 \rfloor}V_{(m+1,2^i,1^{d-2i-1})}t^i.
    \end{equation}
 \end{thm}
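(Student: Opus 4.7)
The proof proceeds by induction on $d$, starting from the defining recursion \eqref{eq-equivariant-inverse-klpol}. First I would catalogue the flats of $U_{m,d}$ under the $\mathfrak{S}_{m+d}$-action: apart from the empty flat, the orbits are indexed by rank $k\in\{1,\ldots,d-1\}$ (the $k$-subsets of $[m+d]$, with stabiliser $\mathfrak{S}_k\times \mathfrak{S}_{m+d-k}$) together with the singleton orbit $\{E\}$. For a rank-$k$ flat $F$ with $k<d$, the localisation $M^F$ is a Boolean matroid of rank $k$, whose (equivariant) Kazhdan-Lusztig polynomial is the trivial representation in degree $0$, while the contraction $M_F$ is $U_{m,d-k}$ with its natural $\mathfrak{S}_{m+d-k}$-action. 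For $F=E$ one has $M^F=M$ and $M_F=\varnothing$. Plugging these into \eqref{eq-equivariant-inverse-klpol} and using $|W|/|W_F|$ to cancel the orbit size produces the recursion
\begin{equation*}
Q^{\mathfrak{S}_{m+d}}_{U_{m,d}}(t)=(-1)^{d+1}\,P^{\mathfrak{S}_{m+d}}_{U_{m,d}}(t)-\sum_{k=1}^{d-1}(-1)^{k}\,\tau_k\ast Q^{\mathfrak{S}_{m+d-k}}_{U_{m,d-k}}(t),
\end{equation*}
where $\tau_k$ is the trivial representation of $\mathfrak{S}_k$ and $\ast$ is the induction product of \eqref{formula-induced-produce}.

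Next I would apply the Frobenius characteristic map, which turns $\ast$ into ordinary multiplication and sends $\tau_k$ to $h_k$. Writing $\mathcal{P}_d$ and $\mathcal{Q}_d$ for the images of $P^{\mathfrak{S}_{m+d}}_{U_{m,d}}(t)$ and $Q^{\mathfrak{S}_{m+d}}_{U_{m,d}}(t)$, substituting the formula of Theorem \ref{main-equi-schur-skew} for $\mathcal{P}_d$ and the induction hypothesis $\mathcal{Q}_{d-k}=\sum_{j\ge 0}s_{(m+1,\,2^{j},\,1^{d-k-2j-1})}\,t^{j}$, the task reduces to verifying the symmetric-function identity
\begin{equation*}
\sum_{i\ge 0}s_{(m+1,\,2^{i},\,1^{d-2i-1})}\,t^{i}=(-1)^{d+1}\sum_{i\ge 0}s_{(m+d-2i,\,(d-2i+1)^{i})/((d-2i-1)^{i})}\,t^{i}-\sum_{k=1}^{d-1}(-1)^{k}h_k\sum_{j\ge 0}s_{(m+1,\,2^{j},\,1^{d-k-2j-1})}\,t^{j}.
\end{equation*}
Extracting the coefficient of $t^{i}$, the Pieri rule expands each $h_k\cdot s_{(m+1,2^{j},1^{d-k-2j-1})}$ on the right as a sum over horizontal $k$-strips, while the skew Schur function admits the Jacobi-Trudi expansion \eqref{eq-jt-identity} into an $(i+1)\times(i+1)$ determinant of complete homogeneous symmetric functions.

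The main obstacle, as anticipated, is the combinatorial bookkeeping of this Pieri/Jacobi-Trudi matching: the right-hand side produces very many Schur functions and nearly all of them must cancel alternatingly in $k$ to leave the single Schur function $s_{(m+1,2^{i},1^{d-2i-1})}$ on the left. A clean way to force this cancellation is to Laplace-expand the Jacobi-Trudi determinant of $s_{(m+d-2i,(d-2i+1)^{i})/((d-2i-1)^{i})}$ along its first row and compare term-by-term with the $k$-indexed Pieri sum, reducing the identity to a telescoping calculation row by row. An alternative strategy, which avoids the induction on $d$ entirely, is to invoke the uniqueness characterisation from Proudfoot's equivariant Kazhdan-Lusztig-Stanley framework: $Q^W_M$ is the unique equivariant convolution inverse of $P^W_M$ in the incidence algebra of $\mathcal{L}(M)$, so it suffices to verify a single convolution identity directly from the proposed explicit formula and Theorem \ref{main-equi-schur-skew}, again via Pieri and Jacobi-Trudi but without iterating the recursion.
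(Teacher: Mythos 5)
You should first be aware that this theorem is not proved in the paper at all: it is imported verbatim from \cite[Theorem 3.2]{GXY-2021}, and the only related argument in the paper, the proof of Theorem \ref{formula-inverse-equivariant-kl-q}, carries out exactly your flat/stabilizer analysis of $U_{m,d}$ but only to match the recursions for the uniform and $q$-niform cases, invoking the cited formula rather than deriving it. So you are attempting something the paper itself does not do, and your reduction is correct as far as it goes: the flats of rank $k<d$ are the $k$-subsets, forming a single $\mathfrak{S}_{m+d}$-orbit with stabilizer $\mathfrak{S}_k\times\mathfrak{S}_{m+d-k}$, the localization is Boolean with trivial equivariant Kazhdan-Lusztig polynomial, the contraction is $U_{m,d-k}$, and your recursion $Q_{U_{m,d}}^{\mathfrak{S}_{m+d}}(t)=(-1)^{d+1}P_{U_{m,d}}^{\mathfrak{S}_{m+d}}(t)-\sum_{k=1}^{d-1}(-1)^k\,\tau_k\ast Q_{U_{m,d-k}}^{\mathfrak{S}_{m+d-k}}(t)$ carries the correct sign on the $F=E$ term (indeed, the paper's display \eqref{equ-Q-umd} records that term as $+C_{m,d}^i$, which agrees with \eqref{eq-equivariant-inverse-klpol} only up to the factor $(-1)^{d+1}$). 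After applying $\ch$, Theorem \ref{main-equi-schur-skew}, and the induction hypothesis, extracting $[t^i]$ reduces the theorem to the identity
\begin{align*}
\sum_{k=0}^{d-2i-1}(-1)^k\,h_k\,s_{(m+1,\,2^i,\,1^{d-2i-1-k})}\;=\;(-1)^{d+1}\,s_{(m+d-2i,\,(d-2i+1)^i)/((d-2i-1)^i)},
\end{align*}
which checks out in low-degree cases and is indeed where all the content lies.

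The genuine gap is that this identity is never proved. You explicitly defer it, naming a strategy (Laplace expansion of the Jacobi-Trudi determinant \eqref{eq-jt-identity} plus a ``telescoping calculation'') without executing it, and your alternative route via the uniqueness of the convolution inverse in the equivariant Kazhdan-Lusztig-Stanley framework is the same verification in disguise, so it does not remove the burden. The cancellation is not automatic bookkeeping: Pieri's rule applied to $h_k\,s_{(m+1,2^i,1^{\ell})}$ produces several shapes for each $k$, the base shape changes with $k$ because $\ell=d-2i-1-k$ varies, and one must show that in the alternating sum everything cancels except the single skew Schur function on the right (for example via a sign-reversing involution pairing horizontal strips across consecutive values of $k$, or by expanding the skew Jacobi-Trudi determinant along a row and inducting); there are also boundary conventions to fix, such as terms with $d-k-2i-1<0$ and, when $m=0$, symbols $(m+1,2^i,\ldots)$ that are not partitions. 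As written, then, your proposal is a sound and correctly set-up plan whose decisive step is asserted rather than established, so it does not yet constitute a proof.
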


Along the lines of the proof of \cite[Theorem 1.2]{P-2019}, we proceed to show that the equivariant inverse Kazhdan-Lusztig polynomials of equivariant $q$-niform matroids are given as follows.

\begin{thm}\label{formula-inverse-equivariant-kl-q}
    For any equivariant $q$-niform matroid $\GL_{m+d}(\mathbb{F}_q)\curvearrowright U_{m,d}(q)$ with $d\geq 1$ and $m\geq 0$, we have
    \begin{align} \label{def-eikl-q}
        Q_{U_{m,d}(q)}^{\GL_{m+d}(\mathbb{F}_q)}(t)=\sum_{i=0}^{\lfloor (d-1)/2 \rfloor}
                                        V_{(m+1,2^{i},1^{d-2i-1})}(q)t^i.
    \end{align}
\end{thm}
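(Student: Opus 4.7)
My plan is to follow Proudfoot's proof of Theorem 1.2 in \cite{P-2019}, replacing the equivariant Kazhdan-Lusztig polynomial by its inverse counterpart throughout. The argument will be an induction on the rank $d$ using the defining recursion \eqref{eq-equivariant-inverse-klpol}, with the Comparison Theorem (Theorem \ref{thm-Comparison}) serving as the bridge that transfers the already-established symmetric group formula of Theorem \ref{thm-inver-equivariant-umd} to its $q$-analog.

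First I would enumerate the $\GL_{m+d}(\mathbb{F}_q)$-orbits on the nonempty flats of $U_{m,d}(q)$: for each $1\le k\le d-1$ there is a unique orbit of rank-$k$ flats, namely the $k$-dimensional subspaces of $\mathbb{F}_q^{m+d}$, together with the singleton orbit consisting of the top flat of rank $d$. A representative $F_k$ of rank $k$ has stabilizer the maximal parabolic $P_{k,m+d}(\mathbb{F}_q)$, which surjects via $\iota$ onto the Levi $\GL_k(\mathbb{F}_q)\times\GL_{m+d-k}(\mathbb{F}_q)$. Next I would identify the localization $(U_{m,d}(q))^{F_k}\cong U_{0,k}(q)$ with the stabilizer acting through $\GL_k(\mathbb{F}_q)$, and identify the contraction $(U_{m,d}(q))_{F_k}$: vectors lying in a common nontrivial coset of the subspace underlying $F_k$ are parallel in the contraction, and collapsing these parallel classes yields $U_{m,d-k}(q)$ with the stabilizer acting through $\GL_{m+d-k}(\mathbb{F}_q)$. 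As Proudfoot observes in \cite{P-2019}, parallel classes that are permuted by the stabilizer do not affect equivariant Kazhdan-Lusztig data, and the same argument applies to the inverse Kazhdan-Lusztig polynomial because the recursion \eqref{eq-equivariant-inverse-klpol} has the same structural shape.

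Substituting into \eqref{eq-equivariant-inverse-klpol} then writes $Q^{\GL_{m+d}(\mathbb{F}_q)}_{U_{m,d}(q)}(t)$ as $(-1)^{d+1}P^{\GL_{m+d}(\mathbb{F}_q)}_{U_{m,d}(q)}(t)$ minus a signed sum over $1\le k\le d-1$ of Harish-Chandra inductions
\[
\Ind_{P_{k,m+d}(\mathbb{F}_q)}^{\GL_{m+d}(\mathbb{F}_q)}\bigl(P^{\GL_k(\mathbb{F}_q)}_{U_{0,k}(q)}(t)\boxtimes Q^{\GL_{m+d-k}(\mathbb{F}_q)}_{U_{m,d-k}(q)}(t)\bigr).
\]
The first factor is known by \eqref{eq-equivariant-kl-qniform} specialized to $m=0$, while the second is furnished by the inductive hypothesis. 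By the Comparison Theorem, the multiplicity of each irreducible unipotent $V_\lambda(q)$ in this Harish-Chandra induction equals the multiplicity of the corresponding $V_\lambda$ in the parallel ordinary induction from $\mathfrak{S}_k\times \mathfrak{S}_{m+d-k}$ to $\mathfrak{S}_{m+d}$ that appears in the recursion for $Q^{\mathfrak{S}_{m+d}}_{U_{m,d}}(t)$. Consequently the whole right-hand side agrees, via the bijection $\psi$, with the corresponding symmetric group expression, which by Theorem \ref{thm-inver-equivariant-umd} evaluates to $\sum_i V_{(m+1,2^i,1^{d-2i-1})}t^i$. Replacing each $V_\lambda$ by $V_\lambda(q)$ yields \eqref{def-eikl-q}.

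The main obstacle is the precise identification of $(U_{m,d}(q))_{F_k}$ together with its $P_{k,m+d}(\mathbb{F}_q)$-action, and the verification that collapsing parallel classes permuted by the stabilizer preserves the equivariant inverse Kazhdan-Lusztig polynomial; Proudfoot carries out the analogous bookkeeping for the equivariant Kazhdan-Lusztig polynomial in \cite{P-2019}, and our situation reduces to his by the identical shape of the two recursions.
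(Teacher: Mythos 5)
Your proposal is correct and follows essentially the same route as the paper: both arguments analyze the orbits of nonempty flats of $U_{m,d}(q)$ (localization $U_{0,k}(q)$, contraction with the lattice of $U_{m,d-k}(q)$, parabolic stabilizer $P_{k,m+d}(\mathbb{F}_q)$), write out the recursion \eqref{eq-equivariant-inverse-klpol} as a signed sum of Harish-Chandra inductions, and use the Comparison Theorem together with Proudfoot's fact that the equivariant Kazhdan--Lusztig coefficients of $q$-niform matroids are unipotent $q$-analogues to identify this recursion with the one for $Q_{U_{m,d}}^{\mathfrak{S}_{m+d}}(t)$, so that Theorem \ref{thm-inver-equivariant-umd} gives \eqref{def-eikl-q} by induction. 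The paper phrases this as a term-by-term comparison of the two coefficient recursions \eqref{equ-Q-umd} and \eqref{equ-Q-qmd} rather than an explicit induction on $d$, but this is the same argument.
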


\begin{proof} Note that the right-hand side of \eqref{def-eikl-q} is just the unipotent $q$-analogue of that of \eqref{def-eikl} by the Comparison Theorem.
For notational convenience, let
\begin{align*}
D_{m,d}^i&=D_{U_{m,d},\mathfrak{S}_{m+d}}^i,\, C_{m,d}^i=C_{U_{m,d},\mathfrak{S}_{m+d}}^i,\\
D_{m,d}^i(q)&=D_{U_{m,d}(q),\GL_{m+d}(\mathbb{F}_q)}^i,\, C_{m,d}^i(q)=C_{U_{m,d}(q),\GL_{m+d}(\mathbb{F}_q)}^i.
\end{align*}

We first derive a recursive formula for $D_{m,d}^i$ from \eqref{coeff-inverse-and-kl}.
To this end, let us analyse the flats of $U_{m,d}$. For the unique flat $F$ of $U_{m,d}$ with $\rk F=d$, the localization $M^F$ is just $U_{m,d}$ and the ground set of the contraction $M_F$ is empty. For any proper flat $F$ with $\rk F=k\le d-1$, we have ${(U_{m,d})}_F\cong U_{m,d-k}$
    and ${(U_{m,d})}^F\cong U_{0,k}$. Note that the stabilizer $W_F$ of a flat $F$ of cardinality $k$ is isomorphic to the Young subgroup
    $\mathfrak{S}_k\times \mathfrak{S}_{m+d-k}$. Thus, for uniform matroid $U_{m,d}$, \eqref{coeff-inverse-and-kl} transforms into the following recursion:
    \begin{eqnarray}\label{equ-Q-umd}
        D_{m,d}^i
        & =& -\sum_{k=1}^{d-1}\sum_{j=0}^k(-1)^{k}\Ind_{\mathfrak{S}_{k}\times \mathfrak{S}_{m+d-k}}^{\mathfrak{S}_{m+d}}
        \left(C_{0,k}^j\boxtimes D_{m,d-k}^{i-j}\right) + C_{m,d}^i\nonumber\\
        & =& -\sum_{k=1}^{d-1}\sum_{j=0}^k(-1)^{k}C_{0,k}^j * D_{m,d-k}^{i-j} + C_{m,d}^i.
    \end{eqnarray}

Next we derive a recursive formula for $D_{m,d}^i(q)$ from \eqref{coeff-inverse-and-kl}.
Note that proper flats of $U_{m,d}(q)$ are collections of linearly independent hyperplanes
    in $F_q^{m+d}$ of cardinality less than $d$.
    If $F$ is a proper flat of $U_{m,d}(q)$ with $\rk F=k$, then
    $U_{m,d}(q)_F\cong U_{m,d-k}(q)$ and $U_{m,d}(q)^F\cong U_{0,k}(q)$.
    The stabilizer of a proper flat $F$ of cardinality $k$ is isomorphic to the parabolic subgroup  $P_{k,m+d}(\mathbb{F}_q)\subseteq\GL_{m+d}(\mathbb{F}_q)$. Thus,  for $q$-niform matroid $U_{m,d}(q)$, \eqref{coeff-inverse-and-kl} transforms into the following recursion:
    \begin{align}\label{equ-Q-qmd}
        D_{m,d}^i(q)
        & = -\sum_{k=1}^{d-1}\sum_{j=0}^k(-1)^{k}\Ind_{P_{k,m+d}(\mathbb{F}_q)}^{\GL_{m+d}(\mathbb{F}_q)}
        \left(C_{0,k}^j(q){\boxtimes} D_{m,d-k}^{i-j}(q)\right)
        + C_{m,d}^i(q)\nonumber\\
        & = -\sum_{k=1}^{d-1}\sum_{j=0}^k(-1)^{k}C_{0,k}^j(q) * D_{U_{m,d-k}}^{i-j}(q)
        + C_{U_{m,d}}^i(q).
    \end{align}
    As shown by Proudfoot \cite{P-2019}, each coefficient $C_{U_{m,d}}^i(q)$ is the unipotent $q$-analogue of the corresponding $C_{U_{m,d}}^i$.
    By the Comparison Theorem, the recursive formula \eqref{equ-Q-umd} for $D_{m,d}^i$ is essentially the same as the recursive formula \eqref{equ-Q-qmd} for $D_{m,d}^i(q)$.
    This completes the proof.
\end{proof}

The main result of this subsection is as follows.

\begin{thm}\label{thm-inverse-umq-qumd}
    For any $d\ge 1,m\ge 0$ and prime power $q$, both $Q_{U_{m,d}(q)}^{\GL_{m+d}(\mathbb{F}_q)}(t)$ and $Q_{U_{m,d}}^{\mathfrak{S}_{m+d}}(t)$ are inductively log-concave.
\end{thm}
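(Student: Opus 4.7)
The plan is to mirror the strategy used for the equivariant Kazhdan-Lusztig polynomials in Theorem \ref{thm-KL-qumd}. First, Proposition \ref{prop-unirep-to-irrdrep} reduces the $\GL_{m+d}(\mathbb{F}_q)$-statement to the $\mathfrak{S}_{m+d}$-statement, since by Theorems \ref{thm-inver-equivariant-umd} and \ref{formula-inverse-equivariant-kl-q} the two polynomials have identical coefficients under the bijection $\psi$. So it suffices to prove induced log-concavity of
\[
Q_{U_{m,d}}^{\mathfrak{S}_{m+d}}(t) \;=\; \sum_{i=0}^{\lfloor (d-1)/2\rfloor} V_{(m+1,2^i,1^{d-2i-1})}\, t^i.
\]
Applying the Frobenius characteristic map and Proposition \ref{prop-il-to-sp}, the task becomes proving the Schur positivity
\[
s_{\lambda^{(i)}}^{2} \;\geq_{s}\; s_{\lambda^{(i-1)}}\, s_{\lambda^{(i+1)}},
\qquad \lambda^{(i)}:=(m+1,2^{i},1^{d-2i-1}),
\]
for every $i$ with $1\le i\le \lfloor(d-1)/2\rfloor-1$ (the cases at the edge of the support reduce to $s_{\lambda^{(i)}}^{2}\geq_{s}0$, which is automatic).

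The key observation that makes this tractable is that the partitions $\lambda^{(i-1)},\lambda^{(i)},\lambda^{(i+1)}$ interact nicely with the sort operation of Lam-Postnikov-Pylyavskyy. Concretely, I would apply Theorem \ref{thm-LPP-sort} with the straight shapes
\[
\lambda=\lambda^{(i+1)}=(m+1,2^{i+1},1^{d-2i-3}), \qquad \nu=\lambda^{(i-1)}=(m+1,2^{i-1},1^{d-2i+1}),
\]
and $\mu=\rho=\emptyset$. Concatenating and sorting the parts yields the multiset $(m+1,m+1,2^{2i},1^{2d-4i-2})$, and a direct parity check on the odd-indexed and even-indexed positions shows that both $\mathrm{sort}_{1}(\lambda,\nu)$ and $\mathrm{sort}_{2}(\lambda,\nu)$ coincide with $\lambda^{(i)}=(m+1,2^{i},1^{d-2i-1})$. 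Theorem \ref{thm-LPP-sort} then delivers the desired inequality
$s_{\lambda^{(i)}}^{2}\geq_{s} s_{\lambda^{(i+1)}}s_{\lambda^{(i-1)}}$.

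There is essentially no obstacle here; the only care needed is in handling the boundary values of $i$ where $\lambda^{(i+1)}$ (or $\lambda^{(i-1)}$, if one wished to phrase it differently) ceases to be a valid partition. In such cases the right-hand side is zero and the inequality is trivial, while the representation-theoretic statement becomes $C_i\ast C_i\in\Rep(\mathfrak{S}_{2(m+d)})$, which is immediate. Assembling these pieces gives the induced log-concavity of $Q_{U_{m,d}}^{\mathfrak{S}_{m+d}}(t)$, and hence, via Proposition \ref{prop-unirep-to-irrdrep}, of $Q_{U_{m,d}(q)}^{\GL_{m+d}(\mathbb{F}_q)}(t)$.
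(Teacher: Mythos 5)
Your proposal is correct and follows essentially the same route as the paper: reduce the $\GL_{m+d}(\mathbb{F}_q)$ case to the $\mathfrak{S}_{m+d}$ case via Proposition \ref{prop-unirep-to-irrdrep}, pass to Schur positivity via the Frobenius map and Proposition \ref{prop-il-to-sp}, and then apply Theorem \ref{thm-LPP-sort} with $\lambda=\lambda^{(i\pm1)}$, $\nu=\lambda^{(i\mp1)}$ and $\mu=\rho=\emptyset$, noting that both sorts equal $\lambda^{(i)}$. Your sort computation and boundary-case remarks check out, so no further comment is needed.
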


\begin{proof} The proof is very similar to that of Theorem \ref{thm-KL-qumd}.
    By Proposition \ref{prop-unirep-to-irrdrep}, it is sufficient to show the induced log-concavity of $Q_{U_{m,d}}^{\mathfrak{S}_{m+d}}(t)$.
    Since the image of $Q_{U_{m,d}}^{\mathfrak{S}_{m+d}}(t)$ under the Frobenius map is
        \begin{align*}
            \ch\,Q_{U_{m,d}}^{\mathfrak{S}_{m+d}}(t)
            =\sum_{i=0}^{\lfloor (d-1)/2 \rfloor}s_{(m+1,2^{i+1},1^{d-2i-3})}t^{i}.
        \end{align*}
    Thus, by Proposition \ref{prop-il-to-sp}, it suffices to show that
   \begin{align*}
    s^2_{(m+1,2^i,1^{d-2i-1})}\ge_ss_{(m+1,2^{i-1},1^{d-2i+1})} s_{(m+1,2^{i+1},1^{d-2i-3})}.
   \end{align*}
    One can verify that this is a special case of Theorem \ref{thm-LPP-sort} with $\lambda=(m+1,2^{i-1},1^{d-2i+1})$, $\nu=(m+1,2^{i+1},1^{d-2i-3})$ and $\mu=\rho=\emptyset$.
    The proof is complete.
\end{proof}

Gao and Xie \cite{GX-2021} conjectured that for any matroid $M$ the inverse Kazhdan-Lusztig polynomial $Q_M(t)$ is log-concave. The above theorem allows us to obtain
the log-concavity of $Q_{U_{m,d}}(t)$ and the log-concavity of $Q_{U_{m,d}(q)}(t)$ simultaneously, the former of which has been proved by Gao and Xie  \cite[Proposition 4.3]{GX-2021}, and the latter seems to be new. This provides some positive evidence for Gao and Xie's log-concavity conjecture on the inverse Kazhdan-Lusztig polynomials.

\begin{cor}
For a fixed prime power $q$, any $d\ge 1$ and $m\ge 0$, both $Q_{U_{m,d}(q)}(t)$ and $Q_{U_{m,d}}(t)$ are log-concave.
\end{cor}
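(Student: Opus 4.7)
The plan is to deduce the corollary from Theorem \ref{thm-inverse-umq-qumd} by simply taking dimensions. The equivariant inverse Kazhdan-Lusztig polynomial $Q_{U_{m,d}}^{\mathfrak{S}_{m+d}}(t)$ has coefficients in $\VRep(\mathfrak{S}_{m+d})$, and by construction the coefficients of the ordinary polynomial $Q_{U_{m,d}}(t)$ are the dimensions of these representations; the same holds for the $q$-niform case via the Comparison Theorem (or directly from \eqref{def-eikl-q}). So it suffices to show that induced log-concavity of a polynomial in $\VRep(W)[t]$ (with respect to any triple $(G,H,\phi)$) forces log-concavity of the sequence of dimensions of its coefficients.

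First I would record the two elementary dimension identities that make this work. If $V$ and $V'$ are representations (of $W$), then the external tensor product satisfies $\dim(V\boxtimes V')=\dim V\cdot \dim V'$, regarded as a representation of $H$ via $\phi$. And for any representation $U$ of $H$, the induced representation satisfies $\dim\Ind_H^G\,U=[G:H]\cdot\dim U$. Combining these, if $(C_i)_{i\ge 0}$ is inductively log-concave with respect to $(G,H,\phi)$ then the fact that $\Ind_H^G(C_i\boxtimes C_i)-\Ind_H^G(C_{i-1}\boxtimes C_{i+1})\in\Rep(G)$ forces its dimension to be nonnegative, which yields
\begin{equation*}
[G:H]\cdot(\dim C_i)^2 \;\ge\; [G:H]\cdot \dim C_{i-1}\cdot\dim C_{i+1},
\end{equation*}
and dividing by $[G:H]>0$ gives the ordinary log-concavity $(\dim C_i)^2\ge\dim C_{i-1}\cdot\dim C_{i+1}$.

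Applying this with $(G,H,\phi)=(\mathfrak{S}_{2(m+d)},\mathfrak{S}_{m+d}\times\mathfrak{S}_{m+d},\mathrm{id})$ to the coefficients of $Q_{U_{m,d}}^{\mathfrak{S}_{m+d}}(t)$, whose induced log-concavity is supplied by Theorem \ref{thm-inverse-umq-qumd}, yields the log-concavity of $Q_{U_{m,d}}(t)$. The analogous application with $(G,H,\phi)=(\GL_{2(m+d)}(\mathbb{F}_q),P_{m+d,2(m+d)}(\mathbb{F}_q),\iota)$, again using Theorem \ref{thm-inverse-umq-qumd} together with Proposition \ref{prop-unirep-to-irrdrep}, yields the log-concavity of $Q_{U_{m,d}(q)}(t)$.

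There is no real obstacle here: the only thing worth mentioning is that one must verify that the coefficients appearing in $Q_{U_{m,d}}^{\mathfrak{S}_{m+d}}(t)$ and $Q_{U_{m,d}(q)}^{\GL_{m+d}(\mathbb{F}_q)}(t)$ are honest (not merely virtual) representations so that taking dimensions yields the nonnegative coefficients of $Q_{U_{m,d}}(t)$ and $Q_{U_{m,d}(q)}(t)$; this is immediate from the explicit formulas \eqref{def-eikl} and \eqref{def-eikl-q}. Hence the corollary follows in one line from Theorem \ref{thm-inverse-umq-qumd} and the dimension identities above.
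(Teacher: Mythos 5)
Your proof is correct and follows essentially the same route as the paper: the corollary is obtained from Theorem \ref{thm-inverse-umq-qumd} by taking dimensions of the coefficients, the paper leaving implicit exactly the identities $\dim\Ind_H^G U=[G:H]\dim U$ and $\dim(C_i\boxtimes C_j)=\dim C_i\cdot\dim C_j$ that you spell out. Nothing is missing.
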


Motivated by Theorem \ref{thm-inverse-umq-qumd}, we  propose the following conjecture,  similar to Conjecture \ref{conj-sil-klpol}.
This conjecture has been verified for $1\le m,d\le 15$.

\begin{conj}
    For $d\ge 1,m\ge 0$ and prime powers $q$, both $Q_{U_{m,d}(q)}^{\GL_{m+d}(\mathbb{F}_q)}(t)$ and $Q_{U_{m,d}}^{\mathfrak{S}_{m+d}}(t)$ are strongly inductively log-concave.
\end{conj}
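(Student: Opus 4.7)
The plan mirrors the strategy of Theorem \ref{thm-inverse-umq-qumd}: I would first apply Proposition \ref{prop-unirep-to-irrdrep} together with the formulas \eqref{def-eikl} and \eqref{def-eikl-q} to reduce the conjecture for $\GL_{m+d}(\mathbb{F}_q)\curvearrowright U_{m,d}(q)$ to the corresponding one for $\mathfrak{S}_{m+d}\curvearrowright U_{m,d}$. Using Proposition \ref{prop-il-to-sp} together with the Frobenius image $\ch\,Q_{U_{m,d}}^{\mathfrak{S}_{m+d}}(t)=\sum_{i}s_{(m+1,2^i,1^{d-2i-1})}t^i$, the conjecture then reduces to the Schur positivity inequality
$$s_{(m+1,2^i,1^{d-2i-1})}\, s_{(m+1,2^j,1^{d-2j-1})}\ge_{s} s_{(m+1,2^{i-1},1^{d-2i+1})}\, s_{(m+1,2^{j+1},1^{d-2j-3})}$$
for all integers $1\le i\le j$ with $j+1\le \lfloor (d-1)/2\rfloor$. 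The core of the proof would lie in establishing this family of Schur positivity statements.

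The first natural attempt is to apply Theorem \ref{thm-LPP-sort} directly with $\lambda=(m+1,2^{i-1},1^{d-2i+1})$, $\nu=(m+1,2^{j+1},1^{d-2j-3})$ and $\mu=\rho=\emptyset$. A short calculation gives $\lambda\cup\nu=(m+1,m+1,2^{i+j},1^{2d-2i-2j-2})$, whence $\mathrm{sort}_1(\lambda,\nu)=(m+1,2^{\lceil(i+j)/2\rceil},1^{d-i-j-1})$ and $\mathrm{sort}_2(\lambda,\nu)=(m+1,2^{\lfloor(i+j)/2\rfloor},1^{d-i-j-1})$. When $i=j$ both of these coincide with $(m+1,2^i,1^{d-2i-1})$, which recovers the log-concavity argument of Theorem \ref{thm-inverse-umq-qumd}; but for $i<j$ the sort produces shapes with $d-i-j-1$ ones rather than the desired $d-2i-1$ and $d-2j-1$, and so it yields only a common upper bound on both products in the target inequality. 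Theorem \ref{thm-LPP-main}, whether applied to the original partitions or to their conjugates $(d-p,p+1,1^{m-1})$, has the same defect: its averaged output lies outside our one-parameter family.

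To bridge the gap I would pursue three avenues. First, an explicit Littlewood-Richardson expansion of the product $s_{(m+1,2^p,1^{d-2p-1})}\,s_{(m+1,2^q,1^{d-2q-1})}$, in the spirit of Lemma \ref{lem-prod-hook-schur}: the near-hook structure severely constrains the admissible lattice words, and one may hope for a term-by-term comparison of the two sides in the style of Corollary \ref{cor-sp-hook-schur}. Second, an induction on the spread $j-i$ that builds a telescoping chain of inequalities from $(i-1,j+1)$ to $(i,j)$, where the intermediate Schur products may involve shapes outside our family but combine to yield the desired comparison. Third, a skew-shape realization of each $s_{(m+1,2^p,1^{d-2p-1})}$ via Proposition \ref{thm-rotate-180} that casts the conjecture as an instance of a suitable generalization of Theorem \ref{thm-Schur-main}.

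The main obstacle is that the sort and average operations underlying Theorems \ref{thm-LPP-main} and \ref{thm-LPP-sort} treat $(i,j)$ and $(i-1,j+1)$ identically, since these two pairs share the same multiset union, and so they cannot distinguish a pair from its slightly-more-spread neighbour as strong inductive log-concavity requires. A successful proof will most likely depend on a new Schur positivity identity specific to this family of near-hook shapes, most plausibly established through an injective combinatorial argument on Littlewood-Richardson tableaux.
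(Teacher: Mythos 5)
First, a point of calibration: the statement you were asked to prove appears in the paper only as a conjecture, supported by SageMath verification for $1\le m,d\le 15$; the paper contains no proof, so there is no argument of the authors to compare yours against. Your proposal correctly reproduces the reduction that the paper's framework provides: by Proposition \ref{prop-unirep-to-irrdrep} the statement for $\GL_{m+d}(\mathbb{F}_q)\curvearrowright U_{m,d}(q)$ follows from the one for $\mathfrak{S}_{m+d}\curvearrowright U_{m,d}$, and by Proposition \ref{prop-il-to-sp} together with Theorem \ref{thm-inver-equivariant-umd} the latter is equivalent to the Schur positivity of $s_{(m+1,2^i,1^{d-2i-1})}s_{(m+1,2^j,1^{d-2j-1})}-s_{(m+1,2^{i-1},1^{d-2i+1})}s_{(m+1,2^{j+1},1^{d-2j-3})}$ for all $1\le i\le j$ (coefficients beyond degree $\lfloor (d-1)/2\rfloor$ being zero). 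Your diagnosis of why Theorems \ref{thm-LPP-main} and \ref{thm-LPP-sort} cannot settle the case $i<j$ is also accurate: the pairs indexed by $(i,j)$ and $(i-1,j+1)$ have the same multiset of parts, so sorting or averaging bounds both products by the same balanced product and cannot compare them with each other; only the case $i=j$, which is the content of Theorem \ref{thm-inverse-umq-qumd}, is recovered.

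That said, the proposal is not a proof, and the gap is precisely the mathematical content of the conjecture. The three avenues you sketch (an explicit Littlewood--Richardson expansion of products of near-hook shapes in the spirit of Lemma \ref{lem-prod-hook-schur}, a telescoping induction on the spread $j-i$, and a skew-shape realization feeding a generalization of Theorem \ref{thm-Schur-main}) are programs rather than arguments: none is carried out, no candidate injection of Littlewood--Richardson tableaux is constructed, and the telescoping route requires intermediate inequalities that are themselves unproved and not visibly weaker than the target. Note also that one cannot bootstrap from the already-established induced log-concavity: the remark following this conjecture in the paper gives a polynomial in $\Rep(\mathfrak{S}_2)[t]$ that is inductively but not strongly inductively log-concave, so the strong statement genuinely requires new positivity input specific to the shapes $(m+1,2^p,1^{d-2p-1})$, as you yourself conclude. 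As it stands, your write-up is a correct reduction plus an honest obstruction analysis, which is useful, but the key Schur positivity inequality for $i<j$ remains unproved and the conjecture remains open.
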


\begin{rem}
It is well known that a log-concave sequence of nonnegative numbers without internal zeros is also strongly log-concave. However, an inductively log-concave sequence of representations without internal zeros need not {to} be strongly inductively log-concave.
Let us consider the polynomial
\begin{align*} J(t)=4V_{(2)}+\left(2V_{(2)}+4V_{(1,1)}\right)t+\left(V_{(2)}+4V_{(1,1)}\right)t^2+4V_{(1,1)}t^3\in \Rep(\mathfrak{S}_2)[t].
\end{align*}
The image of $J(t)$ under the Frobenius characteristic map is
\begin{align*}
\ch\,J(t)=4s_{(2)}+\left(2s_{(2)}+4s_{(1,1)}\right)t+\left(s_{(2)}+4s_{(1,1)}\right)t^2+4s_{(1,1)}t^3
\end{align*}
By using SageMath, we find that
\begin{align*}
    (2s_{(2)}+4s_{(1,1)})^2-4s_{(2)}\times (s_{(2)}+4s_{(1,1)})
    =16s_{(1, 1, 1, 1)} + 16s_{(2, 1, 1)} + 16s_{(2, 2)} \geq_s 0
\end{align*}
and
\begin{align*}
    \left(s_{(2)} +4s_{(1,1)}\right)^2-\left(2s_{(2)}+4s_{(1,1)}\right)\times 4s_{(1,1)}
    =s_{(2, 2)} + s_{(3, 1)} + s_{(4)} \geq_s 0.
\end{align*}
So the polynomial $J(t)$ is inductively log-concave with respect to the triple $(\mathfrak{S}_2\times \mathfrak{S}_2, \mathfrak{S}_4, \mathrm{id})$.
However, we find that the difference
\begin{align*}
    \left(2s_{(2)}+4s_{(1,1)}\right)&\times \left(s_{(2)}+4s_{(1,1)}\right)-4s_{(2)}\times 4s_{(1,1)}\\
    &=16s_{(1, 1, 1, 1)} + 12s_{(2, 1, 1)} + 18s_{(2, 2)} \mathbf{-} 2s_{(3, 1)} + 2s_{(4)}
\end{align*}
is not $s$-positive.
So the polynomial $J(t)$ is not strongly inductively log-concave with respect to $(\mathfrak{S}_2\times \mathfrak{S}_2, \mathfrak{S}_4, \mathrm{id})$.
\end{rem}

\subsection{Equivariant $Z$-polynomials} \label{subsect-4.4}

In this subsection we aim to study the induced log-concavity of
equivariant $Z$-polynomials of uniform matroids and \q-niform matroids.

The concept of $Z$-polynomial was introduced by Proudfoot, Xu and Young \cite{PXY-2018}.
Given a matroid $M$, its $Z$-polynomial, denoted by $Z_M(t)$, is defined by
$$Z_M(t):=\sum_{F\in\mathcal{L}(M)}t^{\rk F}P_{M_F}(t).$$
Proudfoot, Xu and Young showed that $Z$-polynomials are palindromic and found that this symmetry is very helpful for computing Kazhdan-Lusztig polynomials of matroids. They also introduced
the equivariant version of $Z$-polynomial for an equivariant matroid $W\curvearrowright M$, denoted by $Z_{M}^W(t)$, as
\begin{align}\label{ZUWM}
    Z_{M}^W(t)=\sum_{F\in\mathcal{L}(M)}\frac{|W_F|}{|W|}\Ind_{W_F}^{W} (P_{M_F}^{W_F}(t))  t^{\rk F}\in \VRep(W)[t].
\end{align}
Proudfoot, Xu and Young conjectured that for any matroid $M$ the $Z$-polynomial $Z_M(t)$ is log-concave, and for any equivariant matroid $W\curvearrowright M$
the equivariant $Z$-polynomial $Z_{M}^W(t)$ is equivariantly log-concave.

Proudfoot, Xu and Young \cite[Proposition 6.7]{PXY-2018} proved
the equivariant log-concavity of $Z_{U_{0,d}(q)}^{\GL_{d}(\mathbb{F}_q)}(t)$.
In this section we propose a unified approach to the log-concavity of $Z_{U_{m,d}}(t)$ and $Z_{U_{m,d}(q)}(t)$ from the viewpoint of induced log-concavity.
We first show that all coefficients of $Z_{U_{m,d}(q)}^{\GL_{m+d}(\mathbb{F}_q)}(t)$ only involve unipotent representations of
$\GL_{m+d}(\mathbb{F}_q)$. This is clear since by \eqref{eq-equivariant-kl-qniform}, \eqref{eq-equivariant-kl-qniform-skew} and \eqref{ZUWM} we  have
    \begin{align}
        &Z_{U_{m,d}(q)}^{\GL_{m+d}(\mathbb{F}_q)}(t)
        = \sum_{k=0}^{d-1}\Ind_{P_{m+d-k,k}(\mathbb{F}_q)}^{\GL_{m+d}(\mathbb{F}_q)} {(P_{U_{m,d-k}(q)}^{P_{m+d-k,k}(\mathbb{F}_q)}(t))} t^{k}+
        P_{\emptyset}^{\GL_{m+d}(\mathbb{F}_q)}(t)\cdot t^d\nonumber\\
        &= \sum_{k=0}^{d-1}\Ind_{P_{m+d-k,k}(\mathbb{F}_q)}^{\GL_{m+d}(\mathbb{F}_q)} (P_{U_{m,d-k}(q)}^{\GL_{m+d-k}(\mathbb{F}_q)}(t){\boxtimes} V_{(k)}(q)) t^{k}+
        V_{(m+d)}(q) t^d\nonumber\\
        & = \sum_{k=0}^{d-1}\sum_{i=0}^{\lfloor (d-k-1)/2 \rfloor}(
    V_{(m+d-k-2i,{(d-k-2i+1)}^i) / ( {(d-k-2i-1)}^i)}(q)*V_{(k)}(q)) t^{k+i}+
        V_{(m+d)}(q) t^d,\label{eq-equivariant-zpol}
    \end{align}
    where $V_{(k)}(q)$ is considered as the unipotent representation of $\GL_{k}(\mathbb{F}_q)$ indexed by the partition $(k)$.

We
have the following conjecture, which has been verified for $m,d\leq 15$ with the help of SageMath \cite{SAGE}.

\begin{conj}\label{conj-z-pol-il}
    For any $d\ge 1,m\ge 0$ and prime power $q$, both the polynomial  $Z_{U_{m,d}(q)}^{\GL_{m+d}(\mathbb{F}_q)}(t)$ and $Z_{U_{m,d}}^{\mathfrak{S}_{m+d}}(t)$ are strongly inductively log-concave.
\end{conj}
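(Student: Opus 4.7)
The plan is to follow the template of Theorems \ref{thm-chpol-sindlog-q}, \ref{thm-KL-qumd} and \ref{thm-inverse-umq-qumd}. Every coefficient of $Z_{U_{m,d}(q)}^{\GL_{m+d}(\mathbb{F}_q)}(t)$ in \eqref{eq-equivariant-zpol} is a unipotent representation built from the $V_\lambda(q)$'s via Harish--Chandra induction, so Theorem~\ref{thm-Comparison} and Proposition~\ref{prop-unirep-to-irrdrep} reduce the $q$-niform case to the corresponding statement for $\mathfrak{S}_{m+d}\curvearrowright U_{m,d}$. Passing to symmetric functions via Proposition~\ref{prop-il-to-sp} and combining \eqref{ZUWM} with Theorem~\ref{main-equi-schur-skew}, one finds
$$A_n := [t^n]\,\ch Z_{U_{m,d}}^{\mathfrak{S}_{m+d}}(t) \;=\; \sum_{\substack{k+i=n \\ 0\le i\le \min(n,\,d-1-n)}} s_{(k)}\cdot s_{\alpha_{k,i}} \quad (0\le n\le d-1),\qquad A_d = s_{(m+d)},$$
where $\alpha_{k,i} = (m+d-k-2i,\,(d-k-2i+1)^{i})/((d-k-2i-1)^{i})$. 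Strong induced log-concavity then becomes the Schur positivity
$$A_r\,A_s \;\geq_s\; A_{r-1}\,A_{s+1} \qquad \text{for all } 1\le r\le s\le d-1. \qquad (\ast)$$

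To attack $(\ast)$, I would first apply Proposition~\ref{thm-rotate-180} to replace each $\alpha_{k,i}$ by its 180-degree rotation $\alpha_{k,i}^{r}=(m+d-k-2i,\,(m+1)^{i})/((m-1)^{i})$, which is a genuine skew partition amenable to the Lam--Postnikov--Pylyavskyy machinery of Theorems~\ref{thm-LPP-main} and~\ref{thm-LPP-sort}. Expanding each product $s_{(k)}\cdot s_{\alpha_{k,i}^{r}}$ by Pieri's rule writes $A_n$ as an explicit non-negative integral combination of skew Schur functions. The next step is a term-by-term pairing of the summands of $A_r A_s$ with those of $A_{r-1}A_{s+1}$: for each summand $s_{(k_{1})}s_{\alpha_{k_{1},i_{1}}^{r}}\cdot s_{(k_{2})}s_{\alpha_{k_{2},i_{2}}^{r}}$ of $A_r A_s$ one seeks a matching summand, typically obtained either by the shift $(k_{1},k_{2})\mapsto(k_{1}-1,k_{2}+1)$ (fixing $i_{1},i_{2}$) or by $(i_{1},i_{2})\mapsto(i_{1}-1,i_{2}+1)$ (fixing $k_{1},k_{2}$), and then deduces the pairwise Schur-positivity from Theorems~\ref{thm-LPP-main}/\ref{thm-LPP-sort}, Corollary~\ref{cor-sp-hook-schur}, and a strong-log-concavity extension of Theorem~\ref{thm-Schur-main}. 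The palindromy $A_n\cong A_{d-n}$ (anticipated as an equivariant lift of Proudfoot--Xu--Young's, and visible in small cases) halves the range of $(r,s)$ needing inspection.

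The main obstacle is that each $A_n$ is a genuine sum of $\min(n,d-n)+1$ products, so $A_r A_s - A_{r-1}A_{s+1}$ expands as a signed sum of many quadruple products of skew Schur functions; the naive pairings above leave unmatched ``cross-terms'' that need not cancel in isolation. A clean resolution therefore requires a global argument. One promising route is to construct an explicit injection between the Littlewood--Richardson tableaux producing the two sides, in the spirit of Lemma~\ref{lem-prod-hook-schur}, exploiting the fact that each rotated shape $\alpha_{k,i}^{r}$ consists of a long horizontal row joined to an $i\times 2$ tail by a single overlap cell. An alternative, more structural, route is to rewrite $\ch Z_{U_{m,d}}^{\mathfrak{S}_{m+d}}(t)$ so that one of Propositions~\ref{prop-times (t+1)} or~\ref{prop-J(t+1)} applies directly: if one could identify $\ch Z_{U_{m,d}}^{\mathfrak{S}_{m+d}}(t) = (t+\tau)\cdot J(t)$ or $= J(t+\tau)$ for a strongly inductively log-concave $J(t)$, the conjecture would follow at once. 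The plethystic reformulation suggested by the Chen--Proudfoot remark at the end of Section~\ref{sect-2}, namely trying to realise $\ch Z_{U_{m,d}}^{\mathfrak{S}_{m+d}}(t)$ as a plethysm $h_k\circ f$ for a shorter strongly log-concave polynomial $f$, offers a natural place to hunt for such a $J$.
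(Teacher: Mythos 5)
Your reduction is sound and coincides with what the paper itself does: passing from $\GL_{m+d}(\mathbb{F}_q)$ to $\mathfrak{S}_{m+d}$ via Proposition \ref{prop-unirep-to-irrdrep}, applying the Frobenius map via Proposition \ref{prop-il-to-sp}, and arriving at the Schur-positivity statement $(\ast)$ is exactly the route taken around \eqref{spos-zmd}, and your formula for $A_n$ (including the range $0\le i\le\min(n,d-1-n)$, the palindromic symmetry $A_n=A_{d-n}$, and the rotated shape $(m+d-k-2i,(m+1)^i)/((m-1)^i)$) agrees with \eqref{eq-coeff-s1}--\eqref{eq-coeff-s2}. But everything after that point is a genuine gap, and you have in effect diagnosed it yourself: because each $A_n$ is a sum of several products of skew Schur functions, the difference $A_rA_s-A_{r-1}A_{s+1}$ is a signed sum of quadruple products, the LPP theorems and Corollary \ref{cor-sp-hook-schur} only handle single pairwise comparisons, and the ``unmatched cross-terms'' you mention are precisely where every naive pairing breaks. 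Neither of your two proposed repairs is carried out: no injection of Littlewood--Richardson tableaux is constructed, and no factorization $\ch Z=(t+\tau)J(t)$, $J(t+\tau)$, or plethysm $h_k\circ f$ with $J$ or $f$ strongly inductively log-concave is exhibited (nor is it clear one exists). So the proposal is a plan, not a proof.

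You should also be aware that the statement you were asked to prove is stated in the paper as Conjecture \ref{conj-z-pol-il}, not a theorem: the authors verify it computationally for $m,d\le 15$ and prove only special cases, namely $m=0$ for all $d$ (Theorem \ref{thm-Schur-pos-Z-Bd}, via the Jacobi--Trudi identity applied to $\ch Z_{B_d}^{\mathfrak{S}_d}(t)=\sum_k s_{(d-k)}s_{(k)}t^k$), the inequalities involving the extreme coefficients for all $m,d$ (Proposition \ref{prop-first-3-terms}, via Pieri's rule and Theorem \ref{thm-LPP-main}), and $d\le 5$ for all $m$ (by direct expansion and Pieri-rule comparisons). If you want a nonvacuous contribution along your lines, the realistic target is to extend this list of special cases --- for instance by pushing your pairing idea through for $i$-ranges where each $A_n$ has at most two summands --- rather than to claim the full conjecture.
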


For the remainder of this section we will confirm the induced log-concavity conjecture for some special values of $m$ and $d$. Let $Z_{m,d}^i$ denote the coefficient of $t^i$ in $Z_{U_{m,d}}^{\mathfrak{S}_{m+d}}(t)$, and let $Z_{m,d}^i(q)$ denote the coefficient of $t^i$ in $Z_{U_{m,d}(q)}^{\GL_{m+d}(\mathbb{F}_q)}(t)$.
By definition, $Z_{U_{m,d}(q)}^{\GL_{m+d}(\mathbb{F}_q)}(t)$ is inductively log-concave if
\begin{align}\label{HClog-zmdq}
    Z_{m,d}^i(q)*Z_{m,d}^i(q)-Z_{m,d}^{i-1}(q)*Z_{m,d}^{i+1}(q)\in\Rep(\GL_{2m+2d}(\mathbb{F}^{2m+2d}_q)).
\end{align}
By Proposition \ref{prop-unirep-to-irrdrep}, the condition  \eqref{HClog-zmdq} is equivalent to
\begin{align*}
    Z_{m,d}^i*Z_{m,d}^i-Z_{m,d}^{i-1}*Z_{m,d}^{i+1}\in\Rep(\mathfrak{S}_{2m+2d}).
\end{align*}
Furthermore, by Proposition \ref{prop-il-to-sp}, this is equivalent to
    \begin{align}\label{spos-zmd}
        \ch\,Z_{m,d}^i\cdot \ch\,Z_{m,d}^i\geq_s \ch\,Z_{m,d}^{i-1}\cdot \ch\,Z_{m,d}^{i+1}.
    \end{align}
Thus, \eqref{spos-zmd} would imply the log-concavity of
$Z_{U_{m,d}}(t)$ and $Z_{U_{m,d}(q)}(t)$ simultaneously.

In order to prove \eqref{spos-zmd}, it is highly desirable to know the explicit value of each $\ch\,Z_{m,d}^i$. By the Comparison theorem and \eqref{eq-equivariant-zpol} we find that
    \begin{align*}
        Z_{U_{m,d}}^{\mathfrak{S}_{m+d}}(t)
        &= \sum_{k=0}^{d-1}\sum_{i=0}^{\lfloor (d-k-1)/2 \rfloor} (
    V_{(m+d-k-2i,{(d-k-2i+1)}^i) / ( {(d-k-2i-1)}^i)}*V_{(k)}) t^{k+i}+
        V_{(m+d)} t^d.
    \end{align*}
Further applying the Frobenius map leads to
    \begin{align*}
        \ch\, Z_{U_{m,d}}^{\mathfrak{S}_{m+d}}(t)=s_{(m+d)}t^d+\sum_{k=0}^{d-1}
        \sum_{j=0}^{\lfloor\frac{d-k-1}{2}\rfloor}
       s_{\left(m+d-k-2j,(d-k-2j+1)^j\right)/\left((d-k-2j-1)^j\right)} s_{(k)}t^{k+j}.
    \end{align*}
By the above formula, together with the symmetry of $Z_{U_{m,d}}^{\mathfrak{S}_{m+d}}(t)$, we get
\begin{align}\label{eq-coeff-s1}
    \ch\,Z_{m,d}^{d}&=\ch\,Z_{m,d}^0=s_{(m+d)},
\end{align}
and
for each $1\leq i\leq \lfloor\frac{d}{2}\rfloor$
\begin{align}\label{eq-coeff-s2}
\ch\,Z_{m,d}^i&=\ch\,Z_{m,d}^{d-i}= \sum_{k=d-2i+1}^{d-i}s_{(m+k-d+2i,(k-d+2i+1)^{d-i-k})/((k-d+2i-1)^{d-i-k})}s_{(k)}.
\end{align}

With \eqref{eq-coeff-s1} and \eqref{eq-coeff-s2} we are able to show the validity of \eqref{spos-zmd} for small $m$ and $d$, in support of Conjecture \ref{conj-z-pol-il}. Let us first consider the case of $m=0$.
Note that the uniform matroid $U_{0,d}$ is just the Boolean matroid $B_d$, and the $q$-niform matroid $U_{0,d}(q)$ is the matroid represented by all vectors in $\mathbb{F}_q^d$. We have the following result.

\begin{thm}\label{thm-Schur-pos-Z-Bd}
For any $d\geq 1$, the equivariant $Z$-polynomial $Z_{U_{0,d}(q)}^{\GL_{d}(\mathbb{F}_q)}(t)$ is strongly inductively log-concave, and so is $Z_{B_{d}}^{\mathfrak{S}_{d}}(t)$.
\end{thm}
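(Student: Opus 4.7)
The plan is to follow the strategy used throughout Section 4: reduce the $\GL$ statement to its symmetric-group counterpart via Proposition \ref{prop-unirep-to-irrdrep}, transport everything to the ring of symmetric functions via Proposition \ref{prop-il-to-sp}, and then verify a Schur positivity inequality. It thus suffices to prove that for all $1 \le i \le j \le d-1$,
\[
\ch Z_{0,d}^i \cdot \ch Z_{0,d}^j \;\ge_s\; \ch Z_{0,d}^{i-1} \cdot \ch Z_{0,d}^{j+1}.
\]

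The first task is to identify the coefficients $\ch Z_{0,d}^k$ explicitly. Specializing \eqref{eq-coeff-s1} and \eqref{eq-coeff-s2} to $m = 0$, observe that for each $i \ge 1$ and each $k$ with $d-2i+1 \le k < d-i$, the putative outer shape $(k - d + 2i,\, (k - d + 2i + 1)^{d - i - k})$ has first part strictly less than the subsequent parts, so it is not a partition and the corresponding skew Schur function vanishes. Only the $k = d-i$ term in the sum survives, yielding
\[
\ch Z_{0,d}^k \;=\; s_{(k)}\, s_{(d-k)}, \qquad 0 \le k \le d,
\]
in agreement with the fact that every localization and contraction of $B_d$ is itself Boolean with trivial Kazhdan-Lusztig polynomial, so the Z-polynomial collapses to $\sum_{k=0}^d (V_{(d-k)} \ast V_{(k)})\, t^k$.

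It then remains to prove the four-factor inequality
\[
s_{(i)}\, s_{(d-i)}\, s_{(j)}\, s_{(d-j)} \;\ge_s\; s_{(i-1)}\, s_{(d-i+1)}\, s_{(j+1)}\, s_{(d-j-1)} \qquad (1 \le i \le j \le d-1).
\]
The plan here is to group the four factors into the ordered pairs $s_{(i)} s_{(j)}$ and $s_{(d-j)} s_{(d-i)}$, both of which are ordered since $i \le j$ forces $d - j \le d - i$. A direct application of the Pieri rule produces the identities
\[
s_{(i)} s_{(j)} - s_{(i-1)} s_{(j+1)} \;=\; s_{(j,i)}, \qquad
s_{(d-j)} s_{(d-i)} - s_{(d-j-1)} s_{(d-i+1)} \;=\; s_{(d-i,\,d-j)},
\]
both of which are manifestly Schur positive. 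Writing the target difference in the telescoping form
\[
AB - A'B' \;=\; A\,(B - B') \;+\; (A - A')\, B'
\]
with $A = s_{(i)} s_{(j)}$, $B = s_{(d-j)} s_{(d-i)}$, $A' = s_{(i-1)} s_{(j+1)}$, $B' = s_{(d-j-1)} s_{(d-i+1)}$ then expresses it as a sum of products of Schur-positive quantities, hence as Schur positive.

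I do not foresee any substantive obstacle: the sole non-routine step is the coefficient identification $\ch Z_{0,d}^k = s_{(k)} s_{(d-k)}$, after which the Schur positivity reduces to a short application of the Pieri rule and does not require the heavier tools (Theorem \ref{thm-LPP-main}, Theorem \ref{thm-LPP-sort}, or Theorem \ref{thm-Schur-main}) developed in Section \ref{sect-3}.
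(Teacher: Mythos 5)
Your proposal is correct and follows essentially the same route as the paper: it identifies $\ch Z_{0,d}^k = s_{(k)}s_{(d-k)}$ (the paper reads this off from \eqref{eq-coeff-s1}--\eqref{eq-coeff-s2}, you additionally justify it from the Boolean structure), and then derives the four-factor inequality from the two-row identities $s_{(i)}s_{(j)}-s_{(i-1)}s_{(j+1)}=s_{(j,i)}$ and $s_{(d-j)}s_{(d-i)}-s_{(d-j-1)}s_{(d-i+1)}=s_{(d-i,d-j)}$ (paper: Jacobi--Trudi; you: Pieri) via the telescoping $AB-A'B'=A(B-B')+(A-A')B'$ that the paper leaves implicit.
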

\begin{proof}
From \eqref{eq-coeff-s1} and \eqref{eq-coeff-s2} we get that
    \begin{align*}
        \ch\, Z_{B_d}^{\mathfrak{S}_{d}}(t)=\sum_{k=0}^{d}s_{(d-k)}s_{(k)}t^k.
    \end{align*}
Now, for $0<i\leq j<d$, by \eqref{eq-jt-identity} we obtain
\begin{align*}
s_{(j,i)}&=s_{(i)}s_{(j)}-s_{(i-1)}s_{(j+1)},\\
s_{(d-i,d-j)}&=s_{(d-i)}s_{(d-j)}-s_{(d-i+1)}s_{(d-j-1)},
\end{align*}
from which there follows
\begin{align*}
    s_{(i)}s_{(d-i)}s_{(j)}s_{(d-j)}\geq_s s_{(i-1)}s_{(d-i+1)}s_{(j+1)}s_{(d-j-1)}.
\end{align*}
This completes the proof.
\end{proof}

Though it is difficult to prove \eqref{spos-zmd} for general $m,d,i$, we have the following partial result.

\begin{prop}\label{prop-first-3-terms}
    For any $m\ge 1$ and $d\ge 2$, we have
    \begin{align*}
        (\ch\,Z_{m,d}^{1})^2\ge_s \ch\,Z_{m,d}^{0}\cdot \ch\,Z_{m,d}^{2}\qquad \mbox{and} \qquad (\ch\,Z_{m,d}^{d-1})^2\ge_s \ch\,Z_{m,d}^{d}\cdot \ch\,Z_{m,d}^{d-2}.
    \end{align*}
\end{prop}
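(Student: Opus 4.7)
My plan is to translate each inequality into a Schur-positivity statement via the Frobenius characteristic map, and then verify it by combining the explicit formulas \eqref{eq-coeff-s1} and \eqref{eq-coeff-s2} with Pieri's rule and the toolkit of Section \ref{sect-3}. First, \eqref{eq-coeff-s1} and \eqref{eq-coeff-s2} give the palindromic identity $\ch Z_{m,d}^{i} = \ch Z_{m,d}^{d-i}$, so the two asserted inequalities coincide, and it suffices to prove
\[
(s_{(m+1)}s_{(d-1)})^{2} \;\geq_{s}\; s_{(m+d)}\cdot \ch Z_{m,d}^{2}.
\]
Reading off $\ch Z_{m,d}^{2}$ from \eqref{eq-coeff-s2} splits the argument into three cases depending on whether $d\in\{2,3\}$ or $d\geq 4$; in the boundary cases the summation range degenerates and $\ch Z_{m,d}^{2}$ collapses to $s_{(m+2)}$ or $s_{(m+1)}s_{(2)}$, respectively.

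The two boundary cases follow from Pieri's rule alone. For $d=2$, expanding $s_{(m+1)}s_{(1)} = s_{(m+2)}+s_{(m+1,1)}$ and squaring gives
\[
(s_{(m+1)}s_{(1)})^{2} - s_{(m+2)}^{2} \;=\; 2s_{(m+2)}s_{(m+1,1)} + s_{(m+1,1)}^{2} \;\geq_{s}\; 0.
\]
For $d=3$, cancelling the common factor $s_{(m+1)}s_{(2)}$ reduces the goal to $s_{(m+1)}s_{(2)} \geq_{s} s_{(m+3)}$, which is immediate from $s_{(m+1)}s_{(2)} = s_{(m+3)}+s_{(m+2,1)}+s_{(m+1,2)}$.

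The substantive case is $d\geq 4$, for which the target Schur positivity reads
\[
(s_{(m+1)}s_{(d-1)})^{2} \;\geq_{s}\; s_{(m+d)}s_{(m+2)}s_{(d-2)} \;+\; s_{(m+d)}s_{(m+1,2)}s_{(d-3)}.
\]
I plan to apply the two-row Jacobi--Trudi identities $s_{(m+1)}^{2} = s_{(m+1,m+1)} + s_{(m+2)}s_{(m)}$ and $s_{(d-1)}^{2} = s_{(d-1,d-1)} + s_{(d)}s_{(d-2)}$ to break the left-hand side into four $s$-positive products. The Pieri bound $s_{(m)}s_{(d)} \geq_{s} s_{(m+d)}$ immediately yields $s_{(m+2)}s_{(m)}s_{(d)}s_{(d-2)} \geq_{s} s_{(m+d)}s_{(m+2)}s_{(d-2)}$, which absorbs the first summand on the right. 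The remaining three products $s_{(m+1,m+1)}s_{(d-1,d-1)}$, $s_{(m+1,m+1)}s_{(d)}s_{(d-2)}$, and $s_{(m+2)}s_{(m)}s_{(d-1,d-1)}$ must then dominate $s_{(m+d)}s_{(m+1,2)}s_{(d-3)}$.

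This last absorption is the hard step I expect to be the main obstacle. Since $s_{(m+1,2)}s_{(d-3)}$ is not a product of single rows, a single invocation of Theorem \ref{thm-LPP-main} will not match shapes. My planned routes are: (a) rewrite $s_{(m+1,2)} = h_{m+1}h_{2} - h_{m+2}h_{1}$ via Jacobi--Trudi to reduce the whole inequality to a comparison among products of complete homogeneous symmetric functions, then discharge it by Pieri expansions and dominance arguments; or (b) Pieri-expand $s_{(m+1,2)}s_{(d-3)}$ into a short sum of $s_{\mu}$ with $\ell(\mu)\leq 3$ and dominate each summand by an appropriately selected piece of the rectangular terms via Theorem \ref{thm-LPP-sort}. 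A sanity check in the smallest instance $d=4, m=1$ shows that $s_{(2,2)}s_{(4)}s_{(2)} - s_{(5)}s_{(2,2)}s_{(1)} = s_{(2,2)}s_{(4,2)} \geq_{s} 0$, suggesting that a single leftover product may already suffice to dominate $s_{(m+d)}s_{(m+1,2)}s_{(d-3)}$ in general.
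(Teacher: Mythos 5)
Your symmetry reduction, your use of \eqref{eq-coeff-s1}--\eqref{eq-coeff-s2}, and the boundary cases $d=2,3$ are correct and essentially match the paper. But for $d\ge 4$ what you have is not a proof: the step you yourself flag as the main obstacle is left as two candidate routes, and, more seriously, the precise claim your decomposition commits you to is false. After spending the piece $s_{(m+2)}s_{(m)}s_{(d)}s_{(d-2)}$ to absorb $s_{(m+d)}s_{(m+2)}s_{(d-2)}$, you require
\begin{align*}
s_{(m+1,m+1)}s_{(d-1,d-1)}+s_{(m+1,m+1)}s_{(d)}s_{(d-2)}+s_{(m+2)}s_{(m)}s_{(d-1,d-1)}
\;\ge_s\; s_{(m+d)}s_{(m+1,2)}s_{(d-3)}.
\end{align*}
This fails whenever $m\ge 2$ and $d\ge 4$. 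Indeed, the right-hand side always contains $s_{(2m+2d-2,\,2)}$ (the term $s_{\lambda+\mu+\nu}$ occurs in $s_{\lambda}s_{\mu}s_{\nu}$ with coefficient one), whereas in any product $s_{\lambda}s_{\mu}$ every constituent has first part at most $\lambda_1+\mu_1$; hence the three products on the left only produce first parts at most $m+d$, $m+2d-1$ and $2m+d+1$, all strictly less than $2m+2d-2$ once $m\ge 2$ and $d\ge 4$. Your sanity check $(m,d)=(1,4)$ is exactly the borderline situation $m=1$, where $m+2d-1=2m+2d-2$, which is why it looked encouraging. To rescue this route you would have to keep the discarded surplus $(s_{(m)}s_{(d)}-s_{(m+d)})\,s_{(m+2)}s_{(d-2)}$, whose constituents do reach first part $2m+2d-1$, rather than give all of product four away; as written, the four-rectangle split is too lossy.

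For comparison, the paper manipulates the right-hand side instead of the left: it substitutes $s_{(m+1,2)}=s_{(m+1)}s_{(2)}-s_{(m+2)}s_{(1)}$ (Jacobi--Trudi), regroups so that the negative contribution merges with $s_{(m+d)}s_{(m+2)}s_{(d-2)}$ into the single subtracted term $s_{(m+d)}s_{(m+2)}s_{(d-3,1)}$, which may simply be dropped, leaving only the inequality $s_{(m+1)}^{2}s_{(d-1)}^{2}\ge_s s_{(m+d)}s_{(d-3)}s_{(m+1)}s_{(2)}$. That follows from Theorem \ref{thm-LPP-main} in the form $s_{(d-1)}^{2}\ge_s s_{(d-3)}s_{(d+1)}$ together with the Pieri estimate $s_{(m+1)}s_{(d+1)}\ge_s s_{(m+d)}s_{(2)}$, valid for $m\ge 1$, $d\ge 3$. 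If you want to keep your own framework, you need either this kind of cancellation on the right or an argument that genuinely uses the surplus from product four; the remaining-three-products claim cannot be proved because it is not true.
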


\begin{proof}
Due to the symmetry of $Z_{U_{m,d}}^{\mathfrak{S}_{m+d}}(t)$, we only need to prove
\begin{align}\label{eq-zmd-coeff-Schur}
(\ch\,Z_{m,d}^{d-1})^2\ge_s \ch\,Z_{m,d}^{d}\cdot \ch\,Z_{m,d}^{d-2}.
\end{align}

Let us first consider the case of $d=2$. In this case, we have
    \begin{align*}
        \ch\, Z_{U_{m,2}}^{\mathfrak{S}_{m+2}}(t)
        = & s_{(m+2)}t^2+s_{(m+1)}s_{(1)}t+s_{(m+2)}.
    \end{align*}
From Pieri's rule it follows that
\begin{align*}
s_{(m+1)}s_{(1)}=s_{(m+2)}+s_{(m+1,1)}.
\end{align*}
Now one can verify the validity of \eqref{eq-zmd-coeff-Schur} for $d=2$.

In the following we may assume that $d\geq 3$. By \eqref{eq-coeff-s1}, we have
    \begin{align*}
        \ch\,Z_{m,d}^{d} & =s_{(m+d)},\\
        \ch\,Z_{m,d}^{d-1} & =s_{(m+1)}s_{(d-1)},\\
        \ch\,Z_{m,d}^{d-2} & =s_{(m+2)}s_{(d-2)}+s_{(m+1,2)}s_{(d-3)}.
    \end{align*}
We proceed to show that
    \begin{align*}
        s^2_{(m+1)}s^2_{(d-1)}&\ge_s s_{(m+d)}s_{(m+2)}s_{(d-2)}+s_{(m+d)}s_{(m+1,2)}s_{(d-3)}.
\end{align*}
for any $m\ge 1$ and $d\ge 3$.
By Pieri's rule, we have
\begin{align*}
    s_{(m+1,2)}=s_{(m+1)}s_{(2)}-s_{(m+2)}s_{(1)}.
\end{align*}
Thus it suffices to show that
    \begin{align*}
        s^2_{(m+1)}s^2_{(d-1)}&\ge_s s_{(m+d)}s_{(m+2)}s_{(d-2)}+s_{(m+d)}s_{(d-3)}(s_{(m+1)}s_{(2)}-s_{(m+2)}s_{(1)})\\
    &= s_{(m+d)}s_{(d-3)}s_{(m+1)}s_{(2)}-s_{(m+d)}s_{(m+2)}(s_{(d-3)}s_{(1)}-s_{(d-2)})\\
    &= s_{(m+d)}s_{(d-3)}s_{(m+1)}s_{(2)}-s_{(m+d)}s_{(m+2)}s_{(d-3,1)},
\end{align*}
where the last equality follows from \eqref{eq-jt-identity}.
Now we only need to  show that
\begin{align*}
    s^2_{(m+1)}s^2_{(d-1)}\ge_s s_{(m+d)}s_{(d-3)}s_{(m+1)}s_{(2)}.
\end{align*}
By Theorem \ref{thm-LPP-main} we get
\begin{align*}
s^2_{(d-1)}\ge_s s_{(d-3)}s_{(d+1)}.
\end{align*}
Combining the above two inequalities,
it suffices to show that
\begin{align*}
s_{(m+1)}s_{(d+1)}\ge_s s_{(m+d)}s_{(2)}.
\end{align*}
This can be verified by applying Pieri's rule again in view of the condition $m\geq 1$ and $d\geq 3$.
The proof is complete.
\end{proof}

Finally, we prove the validity of Conjecture \ref{conj-z-pol-il} for small values of $d$.

\begin{thm}
For any $m\ge 1$ and $1\leq d\leq 5$, the equivariant $Z$-polynomial $Z_{U_{m,d}(q)}^{\GL_{m+d}(\mathbb{F}_q)}(t)$ is strongly inductively log-concave, and so is $Z_{U_{m,d}}^{\mathfrak{S}_{m+d}}(t)$.
\end{thm}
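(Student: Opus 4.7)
My plan is to follow the same reduction scheme used throughout Section \ref{sect-4}. First, by Proposition \ref{prop-unirep-to-irrdrep} together with Remark \ref{rem-property}, it is enough to prove the strong inductive log-concavity of the symmetric group version $Z_{U_{m,d}}^{\mathfrak{S}_{m+d}}(t)$. Applying the Frobenius characteristic map and Proposition \ref{prop-il-to-sp}, this reduces to establishing, for every pair $1\le i\le j\le d-1$, the Schur positivity
\[
\ch\,Z_{m,d}^{i}\cdot \ch\,Z_{m,d}^{j}\;\ge_{s}\;\ch\,Z_{m,d}^{i-1}\cdot \ch\,Z_{m,d}^{j+1}.
\]
The coefficients $\ch\,Z_{m,d}^{i}$ are given explicitly by \eqref{eq-coeff-s1} and \eqref{eq-coeff-s2}, and the palindromic identity $\ch\,Z_{m,d}^{i}=\ch\,Z_{m,d}^{d-i}$ implies that the pair $(i,j)$ and the pair $(d-j,d-i)$ yield the same inequality, so it suffices to verify only those pairs with $i+j\le d$.

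For each $d\in\{1,2,3,4,5\}$ this leaves a small explicit list of inequalities, each parametrized by $m\ge 1$. The case $d=1$ is vacuous. For $d=2$, the only pair $(1,1)$ is handled by Proposition \ref{prop-first-3-terms}. For $d=3$, the pairs $(1,1)$ and (by symmetry) $(2,2)$ fall under Proposition \ref{prop-first-3-terms}, leaving only $(1,2)$. For $d=4$, the remaining pairs after applying Proposition \ref{prop-first-3-terms} and symmetry are $(1,2)$, $(1,3)$, and $(2,2)$. For $d=5$, the remaining pairs are $(1,2)$, $(1,3)$, $(1,4)$, $(2,2)$, and $(2,3)$. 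For each such pair I would first expand the factors via \eqref{eq-coeff-s2}, rewrite the skew Schur factors appearing there through the Jacobi--Trudi identity \eqref{eq-jt-identity} and Pieri's rule, and then attempt a termwise pairing of the resulting products.

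Once every relevant product is expressed as a sum of products of Schur functions whose shapes are either single rows or hooks, the individual summand inequalities should match one of the ready-made Schur positivity tools already available in the paper: Theorem \ref{thm-LPP-main} (Okounkov's inequality), Theorem \ref{thm-LPP-sort} (the sort inequality), Theorem \ref{thm-Schur-main} (for the skew shapes arising from equivariant KL coefficients), and Corollary \ref{cor-sp-hook-schur} together with Proposition \ref{thm-rotate-180} for the hook cases. Any residual terms that do not immediately fit one of these templates can be routed through a chain of intermediate inequalities in the style of the second proof of Corollary \ref{cor-sp-hook-schur}, adding and subtracting convenient products of Schur functions so that each successive difference becomes an instance of a known template.

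The principal obstacle will be the middle pairs $(2,2)$ and $(2,3)$ at $d=5$, where $\ch\,Z_{m,d}^{i}$ becomes a sum of several products and the cross terms proliferate. In those cases a direct single application of Theorem \ref{thm-LPP-main} or Theorem \ref{thm-LPP-sort} will not suffice, and the bookkeeping needed to group the expanded terms into pieces each matching a single template is the delicate part. I anticipate that here one exploits the symmetry $\ch\,Z_{m,d}^{i}=\ch\,Z_{m,d}^{d-i}$ one more time to reduce to a symmetric expression and then applies Theorem \ref{thm-LPP-sort} to the sorted outer shapes, with the Jacobi--Trudi determinantal identity clearing any obstructions coming from the hook-shape factors in \eqref{eq-coeff-s2}.
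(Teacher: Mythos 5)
Your reduction is sound and matches the paper: passing to $\mathfrak{S}_{m+d}$ via Proposition \ref{prop-unirep-to-irrdrep}, applying $\ch$ and Proposition \ref{prop-il-to-sp}, using the palindromy $\ch Z_{m,d}^i=\ch Z_{m,d}^{d-i}$ to identify the pair $(i,j)$ with $(d-j,d-i)$, and disposing of the $(1,1)$-type pairs by Proposition \ref{prop-first-3-terms}; your resulting lists of remaining pairs for $d=3,4,5$ are correct. The gap is that the remaining inequalities are never actually proved: the plan of ``expand by \eqref{eq-coeff-s2}, attempt a termwise pairing, route residual terms through chains of known templates'' is speculative, and the specific tool you lean on for the pairs you call hardest, Theorem \ref{thm-LPP-sort}, does not apply directly, since for $d=4,5$ the middle coefficients are \emph{sums} of products of Schur functions (e.g.\ $s_{(m+2)}s_{(3)}+s_{(m+1,2)}s_{(2)}$), whereas Theorems \ref{thm-LPP-main} and \ref{thm-LPP-sort} compare products of two skew Schur functions.

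Moreover, your difficulty assessment is inverted, which is why the missing idea matters. For $d\le 5$ the distinct coefficients form a short Schur-positivity chain, e.g.\ for $d=5$,
\begin{align*}
s_{(m+2)}s_{(3)}+s_{(m+1,2)}s_{(2)}\ \ge_s\ s_{(m+1)}s_{(4)}\ \ge_s\ s_{(m+5)},
\end{align*}
and the analogous chains for $d=3,4$, each verified directly by Pieri's rule (together with \eqref{eq-jt-identity} where convenient). Since $\ch Z_{m,5}^2=\ch Z_{m,5}^3$, the pairs $(2,2)$, $(2,3)$, $(1,2)$, $(1,3)$, $(1,4)$ all follow by multiplying a chain inequality by a Schur-positive factor (a Schur-positive difference times a Schur-positive function stays Schur positive by the Littlewood--Richardson rule); the only genuinely nontrivial input is the $(1,1)$ pair, which is exactly Proposition \ref{prop-first-3-terms}. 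Without identifying this chain structure, or some explicit substitute for it, your proposal stops short of a proof of the finitely many inequalities that constitute the theorem.
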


\begin{proof}
By virtue of the degree restriction of equivariant $Z$-polynomials and Proposition \ref{prop-first-3-terms}, we assume that $3\leq d\leq 5$ in the following.
The idea of our proof is to first write down an explicit formula of
$\ch\, Z_{U_{m,1}}^{\mathfrak{S}_{m+1}}(t)$ for each $3\leq d\leq 5$
by using \eqref{eq-coeff-s1} and \eqref{eq-coeff-s2}, and then to verify the equivalent Schur positivity \eqref{spos-zmd} for any $m\geq 1$.

%
%

For $d=3$, we have
\begin{align*}
       \ch\, Z_{U_{m,3}}^{\mathfrak{S}_{m+3}}(t)
        = & s_{(m+3)}t^3 + s_{(m+1)} s_{(2)}t^{2} + s_{(m+1)} s_{(2)}t + s_{(m+3)}.
\end{align*}
By Proposition \ref{prop-first-3-terms}, it remains to prove
\begin{align*}
    (s_{(m+1)} s_{(2)})^2\geq_s (s_{(m+3)})^2.
\end{align*}
It is equivalent to prove
\begin{align*}
    s_{(m+1)} s_{(2)}\geq_s s_{(m+3)},
\end{align*}
which holds by Pieri's rule.

For $d=4$, we have
\begin{align*}
       \ch\, Z_{U_{m,4}}^{\mathfrak{S}_{m+4}}(t)
       = & s_{(m+4)}t^4+ s_{(m+1)} s_{(3)}t^3 + (s_{(m+2)} s_{(2)}+s_{(m+1,2)} s_{(1)})t^2+ s_{(m+1)} s_{(3)}t + s_{(m+4)}.
\end{align*}
We need to prove that
\begin{align*}
(s_{(m+1)} s_{(3)})^2 &\geq_s s_{(m+4)}(s_{(m+2)} s_{(2)}+s_{(m+1,2)} s_{(1)}),\\
(s_{(m+2)} s_{(2)}+s_{(m+1,2)} s_{(1)})^2 &\geq_s (s_{(m+1)} s_{(3)} )^2,\\
s_{(m+1)} s_{(3)}(s_{(m+2)} s_{(2)}+s_{(m+1,2)} s_{(1)}) &\geq s_{(m+4)}s_{(m+1)} s_{(3)},\\
(s_{(m+1)} s_{(3)})^2 &\geq (s_{(m+4)})^2.
\end{align*}
Note that the first inequality 
follows from Proposition \ref{prop-first-3-terms}.
To prove the rest, it suffices to show that
\begin{align}\label{eq-um4-ss3}
    s_{(m+2)} s_{(2)}+s_{(m+1,2)} s_{(1)}  \geq_s s_{(m+1)} s_{(3)} \geq_s s_{(m+4)}
\end{align}
One can see that \eqref{eq-um4-ss3} holds for by  Pieri's rule.
This completes the proof of the case of $d=4$.

For $d=5$, we have
\begin{align*}
       \ch\, Z_{U_{m,5}}^{\mathfrak{S}_{m+5}}(t)
       = & s_{(m+5)}t^5+ s_{(m+1)}s_{(4)}t^4  + (s_{(m+2)}s_{(3)}+s_{(m+1,2)}s_{(2)})t^3\\
        +&( s_{(m+2)}s_{(3)}+s_{(m+1,2)}s_{(2)})t^2
         + s_{(m+1)} s_{(4)}t + s_{(m+5)}.
    \end{align*}
By Proposition \ref{prop-first-3-terms} it remains to show that
\begin{align*}
    (s_{(m+2)}s_{(3)}+s_{(m+1,2)}s_{(2)})^2 &\geq_s (s_{(m+1)}s_{(4)})^2,\\
    (s_{(m+2)}s_{(3)}+s_{(m+1,2)}s_{(2)})^2 &\geq_s s_{(m+1)}s_{(4)}( s_{(m+2)}s_{(3)}+s_{(m+1,2)}s_{(2)}),\\
    s_{(m+1)}s_{(4)}(s_{(m+2)}s_{(3)}+s_{(m+1,2)}s_{(2)}) &\geq_s s_{(m+5)}( s_{(m+2)}s_{(3)}+s_{(m+1,2)}s_{(2)}),\\
    s_{(m+1)}s_{(4)}( s_{(m+2)}s_{(3)}+s_{(m+1,2)}s_{(2)}) &\geq_s s_{(m+5)}s_{(m+1)} s_{(4)},\\
    (s_{(m+1)}s_{(4)})^2 &\geq_s (s_{(m+5)})^2.
\end{align*}
Now it suffices to prove
\begin{align*}
    s_{(m+2)}s_{(3)}+s_{(m+1,2)}s_{(2)}\ge_s s_{(m+1)}s_{(4)}\ge_s s_{(m+5)},
\end{align*}
which can be verified by Pieri's rule.
This completes the proof of the case of $d=5$.
\end{proof}

%

\begin{rem}
{Lots of matroids can be equipped with the action of symmetric groups}, such as paving matroids \cite{KNPV-2022}, braid matroids \cite{GPY-2017}, and thagomizer matroids \cite{XZ-2019}.
It is desirable to study the induced log-concavity of equivariant invariants of these matroids. However, we can not always obtain the induced log-concavity. For example, the equivariant Kazhdan-Lusztig polynomial $P_{B_7}^{\mathfrak{S}_7}(t)$ of the equivariant braid matroid of rank 6 is not inductively log-concave.
By using SageMath \cite{SAGE}, we find that
\begin{align*}
\mathrm{ch}\, P_{B_7}^{\mathfrak{S}_7}(t)=&s_{(7)}+(2s_{(7)}+2s_{(6,1)}+s_{(5,2)}+s_{(4,3)})t\\
 &+(2s_{(7)}+2s_{(6,1)}+2s_{(5,2)}+2s_{(4,3)}+2s_{(4,2,1)}+s_{(3,2,2)}+s_{(2,2,2,1)})t^2,
\end{align*}
while
\begin{align*}
&(2s_{(7)}+2s_{(6,1)}+s_{(5,2)}+s_{(4,3)})^2\\
&-s_{(7)}(2s_{(7)}+2s_{(6,1)}+2s_{(5,2)}+2s_{(4,3)}+2s_{(4,2,1)}+s_{(3,2,2)}+s_{(2,2,2,1)})\\
&=4s_{(11, 1, 1, 1)}+3s_{(5, 5, 4)}+12s_{(7, 5, 1, 1)}+8s_{(7, 3, 3, 1)}+12s_{(6, 6, 2)}+15s_{(6, 5, 3)}+8s_{(6, 4, 4)}+6s_{(6, 6, 1, 1)}\\
&+10s_{(6, 5, 2, 1)}+11s_{(6, 4, 3, 1)}+4s_{(6, 4, 2, 2)}+3s_{(6, 3, 3, 2)}+4s_{(5, 5, 3, 1)}+3s_{(5, 4, 4, 1)}+2s_{(5, 5, 2, 2)}\\
&+4s_{(5, 4, 3, 2)}+s_{(5, 3, 3, 3)}+s_{(4, 4, 4, 2)}+s_{(4, 4, 3, 3)}-s_{(8, 2, 2, 1, 1)}-s_{(7, 2, 2, 2, 1)}+6s_{(10, 2, 1, 1)}+11s_{(9, 3, 1, 1)}\\
&+13s_{(8, 4, 1, 1)}
+13s_{(7, 4, 2, 1)}+s_{(9, 2, 2, 1)}+9s_{(8, 3, 2, 1)}-s_{(8, 2, 2, 2)}+2s_{(7, 3, 2, 2)}+10s_{(12, 1, 1)}+24s_{(7, 6, 1)}\\
&+18s_{(11, 2, 1)}
+8s_{(10, 2, 2)}+35s_{(8, 5, 1)}+27s_{(7, 5, 2)}+29s_{(10, 3, 1)}+36s_{(9, 4, 1)}+21s_{(9, 3, 2)}+30s_{(8, 4, 2)}\\
&+14s_{(8, 3, 3)}+22s_{(7, 4, 3)}+2s_{(14)}+8s_{(13, 1)}+14s_{(12, 2)}+20s_{(11, 3)}+25s_{(10, 4)}+25s_{(9, 5)}\\
&+20s_{(8, 6)}+8s_{(7, 7)},
\end{align*}
which is not $s$-positive.
\end{rem}

\noindent{\bf Acknowledgements.}
Alice Gao was supported by the National Science Foundation of China (11801447) and by the Natural Science Foundation of Shaanxi Province (2020JQ-104).
Matthew Xie was supported by the National Science Foundation of China (12271403).
Ethan Li was supported by the Fundamental Research Funds for the Central Universities (GK202207023).
Arthur Yang was supported in part by the Fundamental Research Funds for the Central Universities and the National Science Foundation of China (11522110 and 11971249).
We would like to thank Nicholas Proudfoot for his valuable correspondence and feedback.
We would also like to thank Tao Gui for the helpful discussions.




\begin{thebibliography}{1}

\bibitem{AHK-2018}
K. Adprasito, J. Huh, and E. Katz.
Hodge theory for combinatorial geometries,
{\em Ann. of Math.} {\bf 188}(2): 381--452, 2018.





\bibitem{ADH-2020}
F. Ardila, G. Denham, and J. Huh.
Lagrangian geometry of matroids,
{\em J. Amer. Math. Soc.}, 2022, {https://doi.org/10.1090/jams/1009.}



\bibitem{BBR-2005}
F. Bergeron, R. Biagioli, and M.H. Rosas.
Inequalities between Littlewood-Richardson coefficients,
{\em J. Combin. Theory Ser. A.} \textbf{113}: 567--590, 2004.

\bibitem{BTW-2006}
L.J. Billera, H. Thomas, and S. van Willigenburg.
Decomposable compositions, symmetric quasisymmetric functions and equality of ribbon Schur functions,
{\em Adv. Math.} \textbf{204}: 204--240, 2006.


\bibitem{Branden-2014}
F. Br\"and\'en.
Unimodality, log-concavity, real-rootedness and beyond,
{\em Handbook of enumerative combinatorics.} CRC Press, 2015.


\bibitem{BH-2020}
P. Br\"and\'en and J. Huh.
Lorenzian polynomials,
{\em Ann. of Math.} {\bf 192}(3): 821--891, 2020.

\bibitem{BHMPW-2020}
T. Braden, J. Huh, J. Matherne, N. Proudfoot, and B. Wang.
Singular Hodge theory for combinatorial geometries,
{\em arXiv:2010.06088}, 2020.

\bibitem{Brenti-1994}
F. Brenti.
Log-concave and unimodal sequences in algebra, combinatorics, and geometry.
{\em Contemp. Math.} \textbf{178}: 71--89, 1994.



%



\bibitem{C-1975}
C.W. Curtis.
Reduction theorems for characters of finite groups of Lie type,
{\em J. Math. Soc. Japan} \textbf{27}(4): 666--688, 1975.



%
%

\bibitem{EPW-2016}
B. Elias, N. Proudfoot, and M. Wakefield.
The Kazhdan-Lusztig polynomial of a matroid,
{\em Adv. Math.} \textbf{299}: 36--70, 2016.

\bibitem{EF-1999}
C.J. Eschenbrenner and M.J. Falk.
Orlik-Solomon algebras and Tutte polynomials,
\emph{J. Algebr. Comb.} \textbf{10}(2): 189--199, 1999.





\bibitem{FFLP-2005}
S. Fomin, W. Fulton, C.K. Li, and Y.T. Poon.
Eigenvalues, singular values, and Littlewood-Richardson coefficients,
{\em Amer. J. Math.} \textbf{127}: 101--127, 2005.



\bibitem{GLXYZ-2018}
A.L.L. Gao, L. Lu, M.H.Y. Xie, A.L.B. Yang, and P.B. Zhang.
The Kazhdan-Lusztig polynomials of uniform matroids,
{\em Adv. in Appl. Math.} \textbf{122}: 102117, 2021.

\bibitem{GX-2021}
A.L.L Gao and M.H.Y. Xie.
The inverse Kazhdan-Lusztig polynomial of a matroid,
{\em J. Combin. Theory Ser. B.} \textbf{151}: 375--392, 2021.

\bibitem{GXY-2021}
A.L.L. Gao, M.H.Y. Xie, and A.L.B. Yang.
The equivariant inverse Kazhdan-Lusztig polynomials of uniform matroids,
{\em SIAM J. Discret. Math.} \textbf{36}: 2553--2569, 2022.


\bibitem{GPY-2017}
K. Gedeon, N. Proudfoot, and B. Young.
The equivariant Kazhdan-Lusztig polynomial of a matroid,
{\em J. Combin. Theory Ser. A.} \textbf{150}: 267--294, 2017.

\bibitem{GPY-2017real}
K. Gedeon, N. Proudfoot, and B. Young.
Kazhdan-Lusztig polynomials of matroids: a survey of results and conjectures,
{\em S$\acute{e}$m. Lothar. Combin.} 78B, Article 80, 12 pp, 2017.

\bibitem{HRS-2021}
T. Hameister, S. Rao, and C. Simpson.
Chow rings of vector space matroids,
{\em J. Comb.} \textbf{12}: 55--83, 2021.

%


\bibitem{Huh-2012}
J. Huh.
Milnor numbers of projective hypersurfaces and the chromatic polynomial of graphs,
{\em J. Amer. Math. Soc.} \textbf{25}(3): 907--927, 2012.

%
\bibitem{HK-2012}
J. Huh and E. Katz.
Log-concavity of characteristic polynomials and the Bergman fan of matroids,
{\em Math. Ann.} \textbf{354}(3): 1103--1116, 2012.




\bibitem{KNPV-2022}
T. Karn, G. Nasr, N. Proudfoot, and L. Vecchi.
Equivariant Kazhdan-Lusztig theory of paving matroids,
arXiv:2202.06938.


\bibitem{KL-1979}
D. Kazhdan and G. Lusztig.
Representations of Coxeter groups and Hecke algebras,
{\em Invent. Math.} \textbf{53}: 165--184, 1979.



\bibitem{LPP-2007}
T. Lam, A. Postnikov, and P. Pylyavskyy.
Schur positive and Schur log-concavity,
{\em Amer. J. Math.} \textbf{129}: 1611--1622, 2007.

\bibitem{LLT-1997}
A. Lascoux, B. Leclerc, and J.Y. Thibon.
Ribbon tableaux, Hall-Littlewood symmetric functions, quantum affine algebras, and unipotent varieties,
{\em J. Math. Phys.} \textbf{38}(3): 1041--1068, 1997.



\bibitem{Lenz-2013}
M. Lenz.
The $f$-vector of a representable-matroid complex is log-concave,
{\em Adv. in Appl. Math.} \textbf{51}(5): 543--545, 2013.



\bibitem{LXY-2018}
L. Lu, M.H.Y. Xie, and A.L.B. Yang.
Kazhdan-Lusztig polynomials of fan matroids, wheel matroids and whirl matroids,
{\em J. Combin. Theory Ser. A.} \textbf{192}: 105665, 2022.

%


\bibitem{MMPR-2021}
J.P. Matherne, D. Miyata, N. Proudfoot, and E. Ramos.
Equivariant log concavity and representation stability,
{\em Int. Math. Res. Not.},  \textbf{2023}(5): 3885--3906, 2023.

\bibitem{O-1997}
A. Okounkov.
Log-concavity of multiplicities with applications to characters of $U(\infty)$,
{\em Adv. Math.} \textbf{127}(2): 258--282, 1997.

\bibitem{Oxley-2011}
J. Oxley.
Matroid Theory,
{\em Oxford Graduate Texts in Mathematics.} \textbf{21} Oxford University Press, Oxford, 2011.

\bibitem{Polo-1999}
P. Polo.
Construction of arbitrary Kazhdan-Lusztig polynomials in symmetric groups,
{\em Represent. Theory.} \textbf{3}: 90--104, 1999.


\bibitem{P-2019}
N. Proudfoot.
Equivariant Kazhdan-Lusztig polynomials of \q-niform matroids,
{\em J. Algebra Combin.} \textbf{2}: 613--619, 2019.

\bibitem{P-2021}
N. Proudfoot.
Equivariant incidence algebras and equivariant Kazhdan-Lusztig-Stanley theory,
{\em Algebr. Comb.}, \textbf{4}(4): 675--681, 2021.

\bibitem{PXY-2018}
N. Proudfoot, Y. Xu, and B. Young.
The $Z$-polynomial of a matroid,
{\em Electron. J. Combin.} \textbf{25}(1), 2018.



\bibitem{RSW-2007}
V. Reiner, K.M. Shaw, and S. Willigenburg.
Coincidences among skew Schur functions,
{\em Adv. Math.} \textbf{216}: 118--152, 2007.



%



\bibitem{SAGE}
W.A. Stein, et al.
Sage Mathematics Software (Version 9.5), The Sage Development Team, http://www.sagemath.org, 2022.

\bibitem{Stanley-1989}
R.P. Stanley.
Log-concave and unimodal sequences in algebra, combinatorics, and geometry,
{\em Ann. New York Acad. Sci.} \textbf{576}: 500--534, 1989.



\bibitem{Stanley}
R.P. Stanley.
Enumerative Combinatorics. Vol. 2, with a foreword by Gian-Carlo Rota and appendix 1 by Sergey Fomin,
Cambridge Studies in Advanced Mathematics, vol.62, Cambridge University Press, Cambridge, 1999.




\bibitem{XZ-2019}
M.H.Y. Xie and P.B. Zhang.
Equivariant Kazhdan-Lusztig polynomials of thagomizer matroids,
{\em Proc. Amer. Math. Soc.} \textbf{147}: 4687--4695, 2019.

\bibitem{XZ-2021}
M.H.Y. Xie and P.B. Zhang.
The log-concavity of Kazhdan-Lusztig polynomials of uniform matroids,
{\em J. Syst. Sci. Complex.}  \textbf{36}: 117--128, 2023.
\end{thebibliography}
\end{document}